\newtheorem{theorem}{Theorem}
\newtheorem{lemma}[theorem]{Lemma}
\newtheorem{corollary}[theorem]{Corollary}
\newtheorem{conjecture}[theorem]{Conjecture}
\theoremstyle{definition}
\newtheorem{definition}[theorem]{Definition}
\newtheorem{remark}[theorem]{Remark}
\numberwithin{theorem}{section}
\setlist[itemize]{leftmargin=1cm}
\setlist[enumerate]{leftmargin=1.5cm}
\def\D{\mathcal{D}}
\def\H{\mathbb{H}}
\def\N{\mathbb{N}}
\def\P{\mathbb{P}}
\def\R{\mathbb{R}}
\def\T{\mathcal{T}}
\def\TT{\mathbb{T}}
\def\Z{\mathbb{Z}}
\def\U{\mathcal{U}}
\def\stab{\mathcal{S}}
\def\stabl{\mathcal{S}_L}
\def\stabu{\mathcal{S}_U}
\def\quasi{\mathcal{Q}}
\def\diam{\mathrm{diam}}
\def\<{\langle}
\def\>{\rangle}
\def\0{\mathbf{0}}
\renewcommand{\leq}{\leqslant}
\renewcommand{\geq}{\geqslant}
\title{Monotone cellular automata in a random environment}
\author{B\'ela Bollob\'as}
\address{Department of Pure Mathematics and Mathematical Statistics, Wilberforce Road, Cambridge, CB3 0WA, UK, and Department of Mathematical Sciences, University of Memphis, Memphis, TN 38152, USA, and London Institute for Mathematical Sciences, 35a South Street, London, W1K 2XF, UK}
\email{b.bollobas@dpmms.cam.ac.uk}
\author{Paul Smith}
\address{IMPA, 110 Estrada Dona Castorina, Jardim Bot\^anico, Rio de Janeiro, 22460-320, Brazil}
\email{psmith@impa.br}
\author{Andrew Uzzell}
\address{Department of Mathematics, Uppsala University, PO Box 480, SE-751 06 Uppsala, Sweden}
\email{andrew.uzzell@math.uu.se}
\subjclass[2010]{Primary 60K35; Secondary 60C05}
\date{\today}
\begin{document}

\begin{abstract}
In this paper we study in complete generality the family of two-state, deterministic, monotone, local, homogeneous cellular automata in $\Z^d$ with random initial configurations. Formally, we are given a set $\U=\{X_1,\dots,X_m\}$ of finite subsets of $\Z^d\setminus\{\0\}$, and an initial set $A_0\subset\Z^d$ of `infected' sites, which we take to be random according to the product measure with density $p$. At time $t\in\N$, the set of infected sites $A_t$ is the union of $A_{t-1}$ and the set of all $x\in\Z^d$ such that $x+X\in A_{t-1}$ for some $X\in\U$. Our model may alternatively be thought of as bootstrap percolation on $\Z^d$ with arbitrary update rules, and for this reason we call it \emph{$\U$-bootstrap percolation}.

In two dimensions, we give a classification of $\U$-bootstrap percolation models into three classes -- supercritical, critical and subcritical -- and we prove results about the phase transitions of all models belonging to the first two of these classes. More precisely, we show that the critical probability for percolation on $(\Z/n\Z)^2$ is $(\log n)^{-\Theta(1)}$ for all models in the critical class, and that it is $n^{-\Theta(1)}$ for all models in the supercritical class.

The results in this paper are the first of any kind on bootstrap percolation considered in this level of generality, and in particular they are the first that make no assumptions of symmetry. It is the hope of the authors that this work will initiate a new, unified theory of bootstrap percolation on $\Z^d$.
\end{abstract}

\maketitle

\section{Introduction}

Cellular automata are dynamical systems on graphs in which vertices have one of a finite number of states at discrete times $t$. The states of the vertices change according to an update rule that is homogeneous (the same rule applies to every vertex) and local (the rule depends on only a finite neighbourhood of the vertex under consideration). These systems were first introduced by von Neumann (see \cite{vN}, for example) at the suggestion of Ulam \cite{Ulam}. The subject of this paper is a substantial extension of, and has its roots in,  \emph{bootstrap percolation}, a particular type of cellular automaton proposed in $1979$ by Chalupa, Leath and Reich \cite{CLR}.

In the simplest and most widely-studied model of bootstrap percolation, called \emph{$r$-neighbour bootstrap percolation}, we are given a graph $G$ and a subset $A\subset V(G)$ of the vertices of $G$. The elements of $A$ are initially \emph{infected}, and at each time $t$, infected vertices stay infected, and among the uninfected vertices of $G$, those that have at least $r$ infected neighbours become infected. In keeping with the percolation literature, we use the words `vertex' and `site' interchangeably. The set of all vertices in $G$ that are eventually infected is called the \emph{closure} of $A$ and denoted $[A]$. If $A$ is such that $[A]=V(G)$ then we say that $A$ \emph{percolates} $G$, or simply that there is \emph{percolation}. 

The key problem is to analyse the behaviour of the $r$-neighbour model acting on an initial set $A$ chosen randomly according to a product measure with density $p$. In particular, one would like to know how large $p$ should be for there to be percolation with high probability (for a sequence of graphs with size of vertex set tending to infinity) or almost surely (for a single infinite graph). The results in relation to this phase transition are usually framed in terms of the \emph{critical probability}, defined as
\begin{equation}\label{eq:pcr}
p_c(G,r) := \inf \big\{ p \, : \, \P_p\big([A]=V(G)\big)\geq 1/2 \big\}.
\end{equation}
A considerable amount is now known about $p_c(G,r)$ in the case of \emph{lattice graphs}: a graph $G$ is a ($d$-dimensional) lattice graph if it is (isomorphic to) a (not necessarily planar) translation invariant locally finite graph with vertex set $\Z^d$. (Equivalently, $G$ is a lattice graph if (there is an isomorphism between $V(G)$ and $\Z^d$ under which) there exists a finite symmetric set $X\subset\Z^d\setminus\{\0\}$ such that the neighbourhood of any vertex $x$ is the set $x+X$.) The most natural lattice graph, and the one that has attracted the greatest amount of study, is the nearest neighbour graph on $\Z^d$. Indeed, the first rigorous result in the field of bootstrap percolation, which was proved by van Enter \cite{vanEnter}, was that $p_c(\Z^2,2)=0$. Schonmann \cite{Schon} later showed that $p_c(\Z^d,r)$ is equal to $0$ if $r\leq d$ and $1$ otherwise. Aizenman and Lebowitz~\cite{AL} were the first to recognize that there is much more to say about the model in the context of finite subgraphs of lattice graphs, and they proved that $p_c([n]^d,2)=\Theta(1)/(\log n)^{d-1}$. Holroyd \cite{Hol} went considerably further in the case $d=r=2$, proving that $p_c([n]^2,2)=(1+o(1))\pi^2/18\log n$. Vast generalizations of these results on $[n]^d$ for general $d$ and $r$ were obtained in the weak sense by Cerf and Cirillo \cite{CerfCir} and Cerf and Manzo \cite{CerfManzo}, and in the sharp sense by Balogh, Bollob\'as and Morris \cite{BBM3D} and Balogh, Bollob\'as, Duminil-Copin and Morris \cite{BBDCM}.

Bootstrap percolation has been studied on a number of other lattice graphs, not only on $\Z^d$ and $[n]^d$; for a small selection of these results, see \cite{BBhyp,BBMmaj,BPP,BisSchon,GG96,GG99,Schon2}. In particular, Gravner and Griffeath \cite{GG96,GG99} studied $r$-neighbour bootstrap percolation on general two-dimensional lattice graphs. (Gravner and Griffeath called their model `threshold dynamics', but it is easily seen to be equivalent to $r$-neighbour bootstrap percolation on two-dimensional lattices.) Unfortunately, not all of the results in \cite{GG96,GG99} were rigorously proved, and in some cases they were not even correct (see, for example, \cite{vEH}).

The first departure from the study of bootstrap percolation on lattice graphs came from Duarte \cite{Duarte}, who introduced a two-dimensional directed model (in fact, the introduction by Duarte of this model predates the introduction by Gravner and Griffeath of $r$-neighbour models on general lattices). He again took the nearest neighbour graph on $\Z^2$ with threshold $r=2$, but he took horizontal edges to be directed rightwards, and vertical edges to be directed in both directions (alternatively one could think of the vertical edges as being undirected, or replaced by two edges, one directed each way). Under this model, an uninfected vertex $x$ becomes infected if at least two of its in-neighbours are infected; that is, if we take $X$ to be the set $\{(-1,0),(0,1),(0,-1)\}$, then a site $x$ becomes infected if $x+X$ contains at least two infected vertices. It is easy to see that this model causes a drift in the growth of the infected set: new sites only become infected to the right of existing sites. The critical probability for the Duarte model was determined up to a constant factor by Mountford \cite{MountDuarte}; he showed that $p_c(\overrightarrow{\Z}_n^2,2)=\Theta\big((\log\log n)^2/\log n\big)$, where $\overrightarrow{\Z}^2_n$ is the nearest neighbour graph on the torus $\Z_n^2$ with horizontal edges directed rightwards and vertical edges undirected. The correct constant in place of the `$\Theta(\cdot)$', or even the existence of a constant, is still not known.

However, even general $r$-neighbour models on directed lattice graphs, without any additional assumptions, are far less general than the models we introduce and analyse in the present paper. Our aims are threefold:
\begin{itemize}
\item to define a new family of percolation models, which generalize all previously studied models of bootstrap percolation on $\Z^d$, and to derive a number of basic properties of these models in two dimensions;
\item to divide the two-dimensional models into three classes according to the nature of their large-scale behaviour, and to prove general results about the critical probabilities of models in two of these classes;
\item to pose a number of open questions about these new general models.
\end{itemize}
We hope through achieving these aims to initiate a new, unified understanding of all bootstrap percolation models on the square lattice.

We go considerably further than all previous authors in the following specific sense: rather than taking a single special model or a special class of models, we take all local, homogeneous, monotone models. This means that we fix an $s$ and take all neighbourhoods of $x$ that consist only of sites at distance at most $s$ from $x$; for each of these we say whether or not $x$ becomes infected given that all the sites in the neighbourhood are infected, subject to the condition that the neighbourhoods that infect $x$ form an up-set (this is part of what we mean by monotone; we also mean that infected vertices never become uninfected). The same rule is then applied to every site $x$. The formal definition is as follows.

\begin{definition}\label{de:update}
Fix an integer $d\geq 2$. Let $m\in\N$ and for each $i\in[m]$ let $X_i\subset\Z^d\setminus\{\0\}$ be finite and non-empty. Let $\U=\{X_1,\ldots,X_m\}$. We call $\U$ an \emph{update family} and each $X_i$ an \emph{update rule}. Let $A\subset\Z^d$. In \emph{$\U$-bootstrap percolation} with update family $\U$ and initial set $A=A_0$, for each $t\geq 0$ we set
\[
A_{t+1} := A_t \cup \{x\in\Z^2 : X_i+x\subset A_t \text{ for some } X_i\in\U\}.
\]
\end{definition}

Thus, there is a set $A$ of initially infected sites and at each time step, whenever a translation of a configuration $X_i$ by an element of $\Z^d$ is completely infected, the corresponding translation of the origin also becomes infected. The \emph{closure of $A$ under $\U$} is defined to be $[A]:=\cup_{t=0}^\infty A_t$. When the closure of $A$ is the whole of $\Z^d$, we say that $A$ \emph{percolates $\Z^d$ under $\U$}, or more usually, when $d$, $\U$ and $A$ are understood from the context, we say simply that there is \emph{percolation}.

The $r$-neighbour model is easily seen to be an example of $\U$-bootstrap percolation: one takes the update rules $X_1,\dots,X_m$ to be the $m:=\binom{|\mathcal{N}|}{r}$ subsets of size $r$ of the graph (in-)neighbourhood $\mathcal{N}$ of the origin. However, these are only a very special subclass of our models, and in general, our models look nothing like $r$-neighbour models. The family in Figure \ref{fi:family} is an example of the kind of general model we have in mind.

For the remainder of the paper, except for a discussion of open problems in Section~\ref{se:open}, we shall restrict our attention to the case $d=2$. Gravner and Griffeath \cite{GG96,GG99}, in their study of two-dimensional $r$-neighbour bootstrap percolation, took the underlying lattice graphs to be undirected. The result of this was that the bootstrap processes had a strong symmetry property, namely that $X\in\U$ if and only if $-X\in\U$. While this may seem natural, quite the opposite is true: symmetry vastly restricts the range and behaviour of the possible models. We have already seen a glimpse of this with the result of Mountford on the Duarte model, which shows that critical probabilities are not always negative powers of $\log n$. In Section~\ref{se:sketch} we explain in greater detail why symmetry is such a powerful property for a bootstrap process to have; here we mention briefly that the main reason is that it allows one to constrain growth to one-dimensional strips (and in higher dimensions, to lower-dimensional strips). Gravner and Griffeath imposed an additional restriction on their models, which had the effect of simplifying the analysis even further. The restriction was that the process should be `balanced', meaning in their case roughly that the four hardest directions in which to grow are equally hard. The reason why this constraint is useful is that, together with symmetry, it implies the existence of bounding parallelograms, all four of whose sides are equally difficult from which to grow, and which can be controlled one-dimensionally in two independent directions.

%The first, which is essentially just a constraint on $r$ for a given lattice graph, was that the process should be critical, meaning roughly, in their case, that growth is neither too easy (there exist finite sets with infinite closure) nor too hard (there exist closed cofinite sets). This is a natural assumption because non-critical models exhibit very different large-scale behaviour, and in the case of lattice graphs they are trivial to analyse. The second assumption, unlike the first, was not at all natural,

We emphasize that we impose \emph{none} of these additional constraints. In particular, most importantly, we do not impose any symmetry constraints on our models, and in general our models will not exhibit any symmetry at all. This is one of the key strengths of our results, and one of the key advances over results by other authors.

Let $\Lambda$ denote either $\Z^2$ or the discrete torus $\Z_n^2:=(\Z/n\Z)^2$. As most previous authors, we are interested in the random setting, in which sites of the lattice $\Lambda$ are included in the initial set $A$ independently with probability $p\in(0,1)$. We describe a set $A$ chosen in this way as \emph{$p$-random}, and we write $\P_p$ for the product probability measure. The $\U$-bootstrap percolation analogue of the critical probability for $r$-neighbour models defined in \eqref{eq:pcr} is as follows:
\begin{equation}\label{eq:pcU}
p_c(\Lambda,\U) := \inf \Big\{ p \, : \, \P_p\big(A \text{ percolates in $\U$-bootstrap percolation on } \Lambda\big) \geq 1/2 \Big\},
\end{equation}
where $A$ is a $p$-random subset of $\Lambda$.

\subsection{The classification of $\U$-bootstrap percolation processes in two dimensions}

For each $u\in S^1$, let $\H_u := \{ x\in\Z^2 \,:\, \< x,u \> < 0 \}$ be the discrete half-plane with boundary perpendicular to $u$ (we use $\<\cdot,\cdot\>$ to denote the usual Euclidean inner product). The following simple definition is one of the most important of the paper; it will form the basis of our tripartition of update families in Definition~\ref{de:class}.

\begin{definition}
A unit vector $u\in S^1$ is a \emph{stable direction for $\U$} if $[\H_u]=\H_u$. We denote by $\stab=\stab(\U)$ the set of all stable directions for $\U$ and we call $\stab$ the \emph{stable set for $\U$}. If $u\in\stab^c = S^1\setminus\stab$, then we call $u$ an \emph{unstable direction for $\U$}. A stable direction $u$ is said to be \emph{strongly stable for $\U$} if it is contained in an open interval of stable directions in $S^1$.
\end{definition}

%The definitions of stable directions and the stable set are not new: the significance of stable directions in relation to the large-scale behaviour of $r$-neighbour models on undirected lattice graphs was first recognized by Gravner and Griffeath \cite{GG93,GG96,GG99} and later utilized by Duminil-Copin and Holroyd \cite{DCH}. We emphasize, though, that this is the first time they have been studied in the context of completely general models.

%Although the stable set depends on the action of the $\U$-bootstrap process on an uncountable set of half-planes, it is actually rather straightforward to determine. In Section~\ref{se:stableset} we describe an easy polynomial time algorithm to compute the stable set of any given update family.

One of the key properties of the $\U$-bootstrap process that we establish in this paper is that the approximate large-scale behaviour of the process is governed primarily by the geometry of the stable set $\stab$ alone. This assertion is encapsulated by the following definition, which is our classification of $\U$-bootstrap percolation models. 

\begin{definition}\label{de:class}
Let $\U$ be a (two-dimensional) update family with stable set $\stab$. We say that $\U$ is
\begin{itemize}
\item \emph{supercritical} if there exists an open semicircle $C\subset S^1$ such that $\stab\cap C=\emptyset$;
\item \emph{critical} if $\stab\cap C\neq\emptyset$ for every open semicircle $C\subset S^1$ and there exists an open semicircle $C\subset S^1$ not containing any strongly stable directions;
\item \emph{subcritical} if every open semicircle $C\subset S^1$ contains a strongly stable direction.
\end{itemize}
\end{definition}

The most important of the three classes of $\U$-bootstrap models is the critical class, which includes all previously studied models of bootstrap percolation on $\Z^2$.

A similar classification in the special case of $r$-neighbour models is given by Gravner and Griffeath \cite{GG96,GG99}. They define a process to be supercritical if there exist finite subsets of $\Z^2$ with infinite closures, to be subcritical if there exist closed cofinite proper subsets of $\Z^2$, and to be critical otherwise. Our definitions coincide precisely with their definitions in the case of the small subclass of our models that they consider. Moreover, the distinction between supercritical and critical is the same in all cases, although not obviously so; in fact, this statement is the content of Theorem \ref{th:superclass}. The distinction between critical and subcritical in general is not the same, however: one can show that the Gravner-Griffeath definition is equivalent to the statement that every direction is stable, while we only require every open semicircle to contain a strongly stable direction. Theorem \ref{th:stabclass} implies that there is an infinite collection of families $\U$ that fall under our definition of subcritical but not under the Gravner-Griffeath definition.

Among the results of this paper, which we state formally next, we prove that the distinction made in Definition \ref{de:class} between supercritical and critical families is the right one, in the specific sense that the critical probabilities of supercritical families are polynomial in $t$ and the critical probabilities of critical families are polylogarithmic in $t$. We are not able to prove that the distinction between critical and subcritical families given here is also correct, although we conjecture that this is the case, in the specific sense that the critical probabilities of subcritical models should be bounded away from zero\footnote{Since the submission of this paper, Balister, Bollob\'as, Przykucki and Smith \cite{BBPS} have proved that this conjecture is true.} (see Conjecture~\ref{co:sub}). We defer further discussion until Section \ref{se:open}.

\subsection{Results}\label{se:results}

As stated earlier in the introduction, our aims in this paper are threefold: to define and prove basic results about $\U$-bootstrap percolation; to prove the first results about the critical probabilities of critical and supercritical models; and to pose a number of open questions about these new models.

Our main results concern the second of these aims. We state these results first in terms of the infection time of the origin (Theorems~\ref{th:main} and~\ref{th:super}), and then in their more usual (and essentially equivalent) form in terms of critical probabilities (Theorem~\ref{th:pc}). Given a set $A\subset\Z^2$, define the stopping time
\[
\tau = \tau(A,\U) := \min \{ t\geq 0 \,:\, \0\in A_t \}.
\]
Thus, $\tau$ is the time at which the origin becomes infected in $\U$-bootstrap percolation with initial set $A$. In the statements of all theorems (and throughout the paper), constants, implicit or otherwise, are quantities that depend only on $\U$.

The following is our main theorem.

\begin{theorem}\label{th:main}
%Let $\U$ be a critical two-dimensional bootstrap percolation update family. Then there exist $0<\alpha_1\leq\alpha_2$ such that
%Let $\U$ be a critical two-dimensional bootstrap percolation update family. Then there exist $0<\alpha_1\leq\alpha_2$ such that
Let $\U$ be a critical two-dimensional bootstrap percolation update family. Then, with high probability as $p\to 0$,
\[
%\frac{1}{p^{\alpha_1}} \leq \log\tau \leq \frac{1}{p^{\alpha_2}}
\tau = \exp\big(p^{-\Theta(1)}\big)
\]
%with high probability as $p\to 0$.
\end{theorem}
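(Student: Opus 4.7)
The plan is to prove the upper bound $\tau \leq \exp(p^{-C})$ and the lower bound $\tau \geq \exp(p^{-c})$ separately. The first step is a structural preparation: since $\U$ is critical, some open semicircle $C \subset S^1$ contains no strongly stable directions, so $\stab \cap C$ consists of stable directions that are not strongly stable. Choose a finite set $\T \subset \stab$, the set of \emph{quasistable} directions, that includes all stable directions in $C$ together with one direction chosen from each maximal arc of strongly stable directions outside $C$. A \emph{droplet} will be a convex polygon whose outward normals lie in $\T$; the key geometric point is that every unit vector in $S^1 \setminus \T$ either is unstable or lies in an open arc of strongly stable directions sandwiched between two consecutive quasistable directions, so growth across the sides of a droplet is controlled by the quasistable directions alone.

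For the upper bound the plan is a hierarchical Aizenman--Lebowitz--Holroyd argument. Identify a critical droplet scale $\ell_c = p^{-\alpha}$ for a suitable $\alpha > 0$ such that, in a box of side $\ell_c$, a critical droplet is internally spanned with probability at least $\exp(-C' p^{-\beta})$. Then prove a growth lemma: conditional on an internally spanned droplet at scale $\ell$, a \emph{collar} around the droplet contains enough infections to cross each quasistable side and enlarge the droplet to scale $2\ell$, with failure probability decaying in $\ell$. Iterating up to $L = \exp(p^{-\gamma})$ with $\gamma > \beta$ and applying a union bound over $(L/\ell_c)^2$ disjoint boxes shows that with high probability there is a critical droplet within distance $L$ of $\0$ that grows to cover the origin, giving $\tau \leq L$.

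For the lower bound the strategy is the dual Aizenman--Lebowitz argument. If $\0 \in A_\tau$ then, by a standard doubling argument, for every scale $\ell \leq \tau$ there is an internally spanned droplet of side between $\ell$ and $2\ell$ near the origin. Bound the probability that a specific droplet of size $\ell_c$ is internally spanned by $\exp(-c' p^{-\beta'})$ via an extremal estimate: spanning a droplet of this size forces many independent crossings of the hard quasistable sides, each of which occurs with small probability. A union bound over positions within distance $\exp(p^{-\gamma'})$ of the origin then yields $\tau \geq \exp(p^{-\gamma'})$ with high probability, completing the matching bound.

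The main obstacle is establishing the droplet growth lemma and the matching crossing bound in the complete absence of symmetry. For previously studied symmetric models, droplets are rectangles (Holroyd) or symmetric polygons (Gravner--Griffeath in the balanced case), and crossing a single side decouples into an effectively one-dimensional infection problem. For a general critical $\U$, each quasistable direction $u \in \T$ may carry its own combinatorial structure, dictated by which update rules of $\U$ have vertices nearby in direction $u$, and interactions between adjacent sides at the corners could in principle prevent a clean doubling. The technical heart of the proof will be a uniform crossing estimate across all quasistable sides, together with a careful calibration of the exponents $\alpha$, $\beta$, $\gamma$ so that both bounds come out with exponents $\Theta(1)$.
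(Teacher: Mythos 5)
Your high-level architecture (polygonal droplets with normals in a finite set of directions, an Aizenman--Lebowitz scheme for the lower bound, a droplet-growth scheme for the upper bound) is the right shape, but both halves rest on steps that fail for general critical families, and these failures are exactly where the real work lies. First, the lower bound: you invoke ``a standard doubling argument'' to produce internally spanned droplets at every scale. That argument is the rectangles process, and its engine is the fact that the closure of two nearby internally spanned rectangles is a larger internally spanned rectangle. For $\T$-droplets with $|\T|\geq 3$ and no symmetry this is simply false: the closure of two nearby internally filled $\T$-droplets need not be a $\T$-droplet, so the scale-by-scale collection of \emph{internally spanned} droplets does not exist. One has to replace ``internally spanned'' by a weaker, algorithmically defined notion. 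The paper does this with a covering algorithm seeded by disjoint \emph{breakthrough blocks} (minimal sets $X\cap(\H_u)^c$, $u\in\stab$, that let a droplet infect one new site); ``covered'' droplets then satisfy an Aizenman--Lebowitz lemma and an extremal lemma even though they are not internally spanned. Relatedly, your extremal estimate via ``many independent crossings of the hard quasistable sides'' is a symmetry argument: confining growth to a strip requires a stable \emph{pair} $u,-u$, which a general critical family does not have.

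Second, the upper bound: your growth lemma asks a droplet at scale $\ell$ to enlarge to scale $2\ell$ in all directions of $\T$. A critical family has a strongly stable direction in the semicircle opposite $C$, and no bounded set of extra infections can advance a side whose outward normal is strongly stable; even at the endpoints $u^l,u^r$ of $C$ only one-sided blocks are guaranteed to exist. So isotropic doubling is impossible, and the growth must be anisotropic (a long thin beam in the midpoint direction $u^+$ of $C$, then a separate, slanted widening step in the $u^l$ direction using only right-blocks -- this is the van Enter--Hulshof mechanism, not the Holroyd one). Moreover, your quasistable set is a subset of $\stab$, and the paper shows this is precisely what does \emph{not} work: with $\T\subset\stab$ there can be consecutive directions $u,v$ such that no update rule fits into the corner wedge $(\H_u\cup\ell_u)\cap(\H_v\cup\ell_v)$, so each new row stops a bounded distance short of the corner and the droplet erodes. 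The fix is to add genuinely \emph{unstable} auxiliary directions -- the perpendiculars to the sites appearing in the update rules -- as extra sides of the droplet; that is the content of the paper's quasi-stability lemma, and it is the key new idea your proposal is missing. Finally, note the theorem only asserts exponents $\Theta(1)$; the paper's upper and lower exponents ($1/\alpha_2$ versus $1/\alpha_1$) do not match in general, so ``calibrating the exponents to match'' is not something the argument should be expected to deliver.
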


We also prove the following theorem for supercritical families.

\begin{theorem}\label{th:super}
Let $\U$ be a supercritical two-dimensional bootstrap percolation update family. Then, with high probability as $p\to 0$,
\[
\tau = p^{-\Theta(1)}.
\]
\end{theorem}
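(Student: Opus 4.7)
The plan is to prove matching bounds $\tau \geq p^{-c_1}$ and $\tau \leq p^{-c_2}$ w.h.p.\ as $p \to 0$, for some positive constants $c_1,c_2$ depending only on $\U$.

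The lower bound uses only the finite range of $\U$. Set $R := \max\{\|z\|_\infty : z \in X,\, X \in \U\} < \infty$. A straightforward induction on $t$, combined with translation invariance of the bootstrap process, shows
\[
\{\0 \in A_t\} \,\subseteq\, \{A_0 \cap [-Rt, Rt]^2 \neq \emptyset\},
\]
since any infection near $\0$ at time $s$ is triggered by sites within distance $R$, which in turn must have been infected from sites within distance $R$ at time $s-1$, and so on. The union bound gives $\P_p(\tau \leq t) \leq (2Rt+1)^2 p$, which is $o(1)$ when $t = p^{-1/3}$; hence $\tau \geq p^{-1/3}$ w.h.p.

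For the upper bound I would exploit supercriticality to obtain a finite ``percolating droplet'' growing at linear speed. More precisely, using the open semicircle $C$ of unstable directions together with Theorem~\ref{th:superclass}, I would produce a finite set $D \subset \Z^2$ of constant size $k := |D|$ and a constant $v > 0$ such that whenever $x + D \subset A_0$ for some $x$ in a suitable cone with $\|x\|_\infty \leq r$, the origin is infected by time at most $r/v + O(1)$. Granted this, I tile a positive-area subregion of $[-r,r]^2$ with $\Theta(r^2)$ disjoint boxes, each large enough to host a translate of $D$. In each tile the probability of containing an initially infected translate of $D$ is at least $p^k$ (by fixing one specific placement), so
\[
\P_p(\text{no tile contains a translate of } D) \,\leq\, (1 - p^k)^{\Theta(r^2)}.
\]
Choosing $r := p^{-k}$ makes this $o(1)$, and the linear-speed growth then yields $\tau \leq r/v + O(1) = O(p^{-k})$ w.h.p.

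The main obstacle is constructing the linearly growing droplet $D$ without any symmetry assumption on $\U$. In the symmetric $r$-neighbour setting studied by Gravner and Griffeath~\cite{GG96,GG99}, rectangular droplets grow coordinate-wise and the speed of growth reduces to an essentially one-dimensional estimate. Here I would instead pick two unstable directions $u_1, u_2 \in C$ whose angle is close to $\pi$, use that each $u_i$-unstability furnishes a finite catalyst pattern $F_{u_i} \subset \H_{u_i}$ driving growth off $\H_{u_i}$, and combine them to show that a single constant-size droplet aligned with both directions simultaneously feeds new infected rows into both $u_1$- and $u_2$-directed half-planes. The resulting wedge of infection has opening angle close to $\pi$ and grows at positive speed, which is exactly what is needed to reach $\0$ in time linear in the distance from the seed.
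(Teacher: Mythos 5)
Your lower bound is exactly the paper's (the paper dismisses it in one line as trivial, and your finite-range induction plus union bound is a correct way to make it precise), and the probabilistic half of your upper bound -- tile a region of scale $r=p^{-k}$ with $\Theta(r^2)$ disjoint candidate seeds, each fully infected with probability $\geq p^{k}$, then let one seed grow deterministically to reach $\0$ -- is also the paper's argument. The gap is in the deterministic lemma you defer to the last paragraph, and it sits precisely at the point you gloss over.

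The problem is the \emph{corner} between your two growth directions. Knowing that each $u_i$ is unstable gives you, for each $i$, a rule $X\in\U$ with $X\subset\H_{u_i}$, which lets an entire half-plane $\H_{u_i}$ advance by a row. But your seed is finite, so at each step you are trying to infect a bounded \emph{segment} of new sites on the $u_1$-side of a finite region, and the rule you use to march along that segment must fit inside the already-infected region; near the end of the segment where the $u_1$-side meets the $u_2$-side, it generally does not. The paper's Figure~\ref{fi:quasi} gives a concrete supercritical-type example with two rules $X_1,X_2$ destabilizing overlapping intervals $(\phi_1,\psi_1)$ and $(\phi_2,\psi_2)$ with $\phi_2<\phi_1<\psi_2<\psi_1$: a two-sided wedge with outward normals $u(\phi_2)$ and $u(\psi_1)$ cannot fill its own corner, because $X_1$ pokes out of one side there and $X_2$ out of the other. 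So each new row falls a constant short of the corner, the deficit accumulates, and the "wedge of opening angle close to $\pi$ growing at positive speed" is not established -- in examples like Figure~\ref{fi:quasi} the naive two-direction construction genuinely stalls. The paper's fix is to enlarge the set of directions defining the droplet by the finitely many \emph{quasi-stable} directions (Lemma~\ref{le:quasi}, namely all $u$ perpendicular to some site of some rule), so that between any two consecutive sides there is a rule contained in $(\H_u\cup\ell_u)\cap(\H_v\cup\ell_v)$; Lemma~\ref{le:dropletnewside} then shows each side advances all the way to its corners, and Lemma~\ref{le:dropletgrow} assembles this into linear growth of a polygonal droplet from a constant-size rectangle (with all the required blocks empty precisely because $\U$ is supercritical). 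Without this ingredient, or an equivalent one, your claimed constant-size linearly growing seed $D$ is unproven, and the method you sketch for producing it fails for asymmetric families of the above type.
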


We remark that the lower bound of Theorem~\ref{th:super} is a triviality: there must be at least one initially infected site within distance $O(\tau)$ of the origin. The content of the theorem is the upper bound.
%The strength of these theorems lies in their level of generality and the lack of any assumptions about the update family beyond the necessary assumption that it is critical or supercritical.

As a corollary of Theorems~\ref{th:main} and~\ref{th:super}, we have that for any critical or supercritical update family and any $p>0$, a $p$-random subset of $\Z^2$ almost surely percolates.

\begin{corollary}\label{co:pcZ2}
Let $\U$ be a critical or supercritical two-dimensional bootstrap percolation update family, let $p\in(0,1]$, and let $A$ be a $p$-random subset of $\Z^2$. Then
\begin{equation}
\P_p\big([A]=\Z^2\big)=1. \tag*{\qedsymbol}
\end{equation}
\end{corollary}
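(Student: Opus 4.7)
The plan is to bootstrap the statements of Theorems~\ref{th:main} and~\ref{th:super}, which only give information as $p\to 0$, into a full probability-$1$ statement for every fixed $p\in(0,1]$. The engine is monotonicity of the model in $p$, combined with translation invariance of the product measure.

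First I would fix $\U$ and consider $q(p):=\P_p(\tau<\infty)=\P_p(\0\in[A])$ for $p\in(0,1]$. I would show that $q$ is nondecreasing in $p$ by the standard coupling: draw i.i.d.\ uniform variables $U_x\in[0,1]$ for $x\in\Z^2$ and set $A(p):=\{x:U_x\leq p\}$, so that $A(p)\subseteq A(p')$ whenever $p\leq p'$. Because the $\U$-bootstrap closure is monotone ($B\subseteq C$ implies $[B]\subseteq[C]$, immediate from Definition~\ref{de:update}), we get $[A(p)]\subseteq[A(p')]$, hence $\{\0\in[A(p)]\}\subseteq\{\0\in[A(p')]\}$.

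Second, Theorems~\ref{th:main} and~\ref{th:super} imply that, for $\U$ critical or supercritical, $\tau$ is finite with high probability as $p\to 0$ (since $\tau$ is bounded above by the explicit finite quantities $\exp(p^{-\Theta(1)})$ or $p^{-\Theta(1)}$). That is, $q(p)\to 1$ as $p\to 0^+$. A nondecreasing function on $(0,1]$ whose limit at $0^+$ equals $1$ must equal $1$ identically: indeed, $\inf_{p>0}q(p)=\lim_{p\to 0^+}q(p)=1$. Therefore $\P_p(\0\in[A])=1$ for every $p\in(0,1]$.

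Finally, the $\U$-bootstrap process commutes with translations of $\Z^2$ (translation-homogeneity of the update rule), so $[A-x]=[A]-x$, and hence $\{x\in[A]\}$ has the same probability as $\{\0\in[A-x]\}$. Since the product measure $\P_p$ is translation-invariant, $\P_p(x\in[A])=\P_p(\0\in[A])=1$ for every $x\in\Z^2$. Writing
\[
\{[A]=\Z^2\} \;=\; \bigcap_{x\in\Z^2}\{x\in[A]\}
\]
as a countable intersection of almost-sure events, countable subadditivity gives $\P_p([A]=\Z^2)=1$, which is the statement of the corollary.

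There is no real obstacle: the whole content of the corollary is already in Theorems~\ref{th:main} and~\ref{th:super}, and the only thing to notice is that monotonicity in $p$ upgrades a "high probability as $p\to 0$" statement into a pointwise probability-$1$ statement for every $p>0$.
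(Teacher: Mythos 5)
Your proof is correct and follows essentially the same route as the paper, whose own argument is the one-line observation that the probability any given site is never infected is zero together with countability of $\Z^2$. You simply make explicit the monotone-coupling step (upgrading the ``with high probability as $p\to 0$'' statements of Theorems~\ref{th:main} and~\ref{th:super} to probability one for each fixed $p$) that the paper leaves implicit.
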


This is immediate from the previous theorems: the probability that any given site is not eventually infected is zero, and there are countably many sites.

As stated above, there are essentially equivalent versions of the first two theorems stated in terms of critical probabilities. We combine these into the following theorem.

\begin{theorem}\label{th:pc}
Let $\U$ be a two-dimensional bootstrap percolation update family.
\begin{enumerate}
\item If $\U$ is critical then
\[
p_c(\Z_n^2,\U) = \left(\frac{1}{\log n}\right)^{\Theta(1)}.
\]
\item If $\U$ is supercritical then
\[
p_c(\Z_n^2,\U) = n^{-\Theta(1)}.
\]
\end{enumerate}
\end{theorem}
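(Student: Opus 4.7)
The proof converts the infection time statements of Theorems~\ref{th:main} and~\ref{th:super} into critical probability statements on $\Z_n^2$. I describe part~(i); part~(ii) is analogous, with $\exp(p^{-\Theta(1)})$ replaced everywhere by $p^{-\Theta(1)}$. Let $0 < c_1 \leq C_1$ be constants from Theorem~\ref{th:main} such that $\exp(p^{-c_1}) \leq \tau \leq \exp(p^{-C_1})$ with probability $1-o(1)$ as $p \to 0$. The central tool is a \emph{locality principle}: letting $D$ denote the maximal $L^\infty$-norm of any element of $\bigcup_{X\in\U} X$, the infection of $\0$ by time $t$ depends only on sites inside the $L^\infty$-ball of radius $Dt$ around $\0$. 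For $t \leq n/(3D)$ this ball embeds identically in $\Z^2$ and in $\Z_n^2$, giving $\P_p^{\Z_n^2}(\tau \leq t) = \P_p^{\Z^2}(\tau \leq t)$ in this range.

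For the upper bound on $p_c$, set $p = (\log n)^{-1/(2C_1)}$, so that $T := \exp(p^{-C_1}) = \exp((\log n)^{1/2}) = n^{o(1)} \ll n/(3D)$. By Theorem~\ref{th:main} and locality, $\P_p^{\Z_n^2}(\tau \leq T) \to 1$. To promote this pointwise statement to percolation of the entire torus, I would partition $\Z_n^2$ into disjoint blocks of side $\Theta(T)$; by independence, with high probability a positive density of these blocks are \emph{internally spanned} at the critical scale. The growth/engulfment machinery underlying the upper bound in Theorem~\ref{th:main} then shows that a single such internally spanned block grows to swallow $\Z_n^2$ with high probability, yielding $p_c(\Z_n^2, \U) \leq (\log n)^{-1/(2C_1)}$.

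For the lower bound on $p_c$, choose $p = (\log n)^{-\alpha}$ with $\alpha > 3/c_1$, so that $\exp(p^{-c_1}) \gg n^3$. By locality, $\P_p^{\Z_n^2}(\tau \leq n/(3D)) \to 0$; to rule out infection of $\0$ on the torus at times larger than $n/(3D)$ (which in principle could be as large as $n^2$), I would appeal to an Aizenman--Lebowitz-style argument already present in the lower bound of Theorem~\ref{th:main}: percolation on $\Z_n^2$ forces an internally spanned rectangle at every intermediate scale, and the probability of such a rectangle at the critical scale $\exp(p^{-c_1/2})$ is $o(1)$ by a union bound over its possible positions on the torus.

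The main obstacle is the upper bound, specifically the extension from ``$\0$ is infected w.h.p.'' to ``every site is infected w.h.p.'' The infection-time bound of Theorem~\ref{th:main} gives only $o(1)$ error, whereas a naive union bound over $n^2$ sites would require $o(1/n^2)$. Closing this gap requires the renormalization/engulfment argument sketched above, which depends both on the existence of an open semicircle free of strongly stable directions (to supply a fast direction of growth) and on the presence of a stable direction in every open semicircle (which distinguishes the critical from the supercritical case and pins down the relevant scale $T$).
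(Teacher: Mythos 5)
Your overall strategy is the one the paper intends: the paper omits the proof of Theorem~\ref{th:pc} with the remark that it ``follows from exactly the same methods'' as Theorems~\ref{th:main} and~\ref{th:super}, and you correctly recognise that the infection-time statements cannot simply be black-boxed --- both bounds require re-entering the machinery (the covering algorithm and Lemmas~\ref{le:AL} and~\ref{le:extremal} for the lower bound; the droplet-growth framework of Section~\ref{se:upperprelims} and the seed-plus-growth argument of Section~\ref{se:upper} for the upper bound), now run with $n$ in place of $t$ and with a union bound over the torus. That said, two of the steps as you have written them would fail and must be replaced by direct reruns of the paper's lemmas. First, in the lower bound, the ``critical scale'' $\exp(p^{-c_1/2})$ is not the right scale for the union bound: with $p=(\log n)^{-\alpha}$ and $\alpha>3/c_1$ this quantity is $\exp\bigl((\log n)^{\alpha c_1/2}\bigr)\gg n$, so no covered droplet of that diameter fits in $\Z_n^2$ at all. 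The correct scale is $\Theta(\log n)$, exactly as in Lemma~\ref{le:lbcalc1}: a covered droplet of diameter in $[\log n,3\log n]$ has probability at most $n^{-3}$, there are $n^2(\log n)^{O(1)}$ positions on the torus, and the Aizenman--Lebowitz lemma reduces ``the origin is eventually infected'' to the existence of such a droplet (or of a small covered droplet containing the origin, handled as in Lemma~\ref{le:lbcalc2}).

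Second, in the upper bound, ``a positive density of blocks of side $\Theta(T)$ are internally spanned, by independence'' does not follow from Theorem~\ref{th:main} plus locality. What locality gives you is that the \emph{centre} of each block is infected by time $T$ using only sites inside the block; this is a much weaker event than the block being internally filled, and a positive density of isolated infected sites does not by itself trigger growth. The seed event must instead be taken from the proof of Theorem~\ref{th:upper}: a translate of the rectangle $R^{(1)}$ (of dimensions $O(1)\times\mu_1(p)$) entirely contained in $A$, which has probability $p^{O(\mu_1(p))}=n^{-o(1)}$ and admits $n^{2-o(1)}$ disjoint candidate positions, so one exists with high probability. From there Lemmas~\ref{le:dropletgrow}, \ref{le:Ri} and~\ref{le:T} grow the seed row by row; the only genuinely new verification for the torus is that each of the $O(n)$ additional row-steps (including those after the droplet wraps around in the $u^+$ direction) fails with probability $o(n^{-1})$, which holds because the relevant sides then have length $\Omega(\mu_2(p))$ or $\Omega(n)$. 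With these two repairs your argument is exactly the paper's.
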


We shall not prove Theorem~\ref{th:pc} explicitly, but we note here that it follows from exactly the same methods as Theorems~\ref{th:main} and~\ref{th:super}.

\begin{remark}
For certain critical update families $\U$, our methods can be used to be used to determine $p_c(\Z_n^2,\U)$ up to a constant factor. This is the case, for example, with the 2-neighbour model, and thus we recover the theorem of Aizenman and Lebowitz~\cite{AL} as a special case of our results (see Theorem~\ref{th:lower} and Remark~\ref{re:bounds}). However, we do not believe that our methods can be used without further improvements to determine $p_c(\Z_n^2,\U)$ up to a constant factor for general critical models.
%The reason we do not attempt to optimize our bounds is that, in our proof of the upper bound of Theorem~\ref{th:main}, the simpler approach we use enables us to treat every critical family (and indeed every supercritical family) under the same framework. We remark further on this point in Section~\ref{se:upper}.
\end{remark}

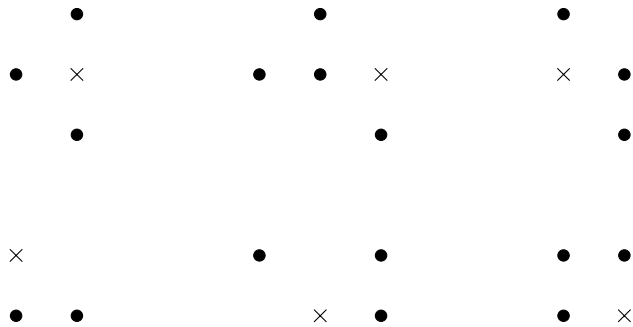
\begin{figure}[ht]
  \centering
  \begin{tikzpicture}[scale=0.8]
    \begin{scope}[every node/.style={cross out,draw,inner sep=0,minimum size=0.15cm}]
      \node at (0,1) {};
      \node at (5,0) {};
      \node at (10,0) {};
      \node at (1,4) {};
      \node at (6,4) {};
      \node at (9,4) {};
    \end{scope}
    \begin{scope}[every node/.style={circle,draw,fill,inner sep=0,minimum size=0.15cm}]
      \node at (0,0) {};
      \node at (1,0) {};
      \node at (4,1) {};
      \node at (6,0) {};
      \node at (6,1) {};
      \node at (9,0) {};
      \node at (9,1) {};
      \node at (10,1) {};
      \node at (0,4) {};
      \node at (1,3) {};
      \node at (1,5) {};
      \node at (4,4) {};
      \node at (5,4) {};
      \node at (5,5) {};
      \node at (6,3) {};
      \node at (9,5) {};
      \node at (10,3) {};
      \node at (10,4) {};
    \end{scope}
  \end{tikzpicture}
  \caption{An example of an update family $\U$ consisting of six rules. If there is a translation of one of the rules such that all of the sites marked with dots are infected, then the site marked with a cross will become infected too. It is easy to verify that $\U$ is critical. Our methods can be used to show that $p_c(\Z_n^2,\U)=\Theta(1/\log n)$, and therefore that $\U$ has the same large-scale behaviour as two-neighbour bootstrap percolation.}
  \label{fi:family}
\end{figure}

\begin{remark}
Theorem~\ref{th:pc} is a statement about critical probabilities of $\U$-bootstrap percolation on the discrete torus $\Z_n^2$. The corresponding statement with $\Z_n^2$ replaced by the grid $[n]^2$ is false. This is for trivial reasons: the infection would not in general be able to reach the corners of the grid unles $p=1-o(1)$.
\end{remark}

Returning to the first of our aims, let us now be clearer about what we mean by proving `basic results' about $\U$-bootstrap percolation: we mean establishing general properties of stable sets and the action of the processes on half-planes. This is the subject of Section \ref{se:stableset}. The main result of that section is the following classification of stable sets. Let us say that a unit vector $u\in S^1$ is \emph{rational} if it has either rational or infinite slope relative to the standard basis vectors.

\begin{theorem}\label{th:stabclass}
Let $\stab\subset S^1$. Then there exists a two-dimensional bootstrap percolation update family $\U$ of which $\stab$ is the stable set if and only if $\stab$ can be expressed as a finite union of closed intervals in $S^1$ with rational end-points.
\end{theorem}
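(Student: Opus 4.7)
The plan is to first translate stability into a simple linear inequality. Writing $\H_u=\{x\in\Z^2:\<x,u\>< 0\}$, I observe that $[\H_u]=\H_u$ if and only if no site $x$ with $\<x,u\>\geq 0$ is immediately infected by $\H_u$. A single-step escape happens exactly when some rule $X \in \U$ has all its elements strictly inside $\H_u$, using the witness $x=0$ (and conversely, for any such $x$ and any $X$ with some $y\in X$ satisfying $\<y,u\>\geq 0$, we get $\<x+y,u\>\geq 0$). This yields the clean identity
\[
\stab(\U)\;=\;\bigcap_{X\in\U}\bigcup_{y\in X}\{u\in S^1:\<y,u\>\geq 0\}.
\]

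For the "only if" direction, each set $\{u\in S^1:\<y,u\>\geq 0\}$ is a closed semicircle whose two boundary points are the unit vectors perpendicular to the nonzero integer vector $y$, and these are rational in the sense of the theorem. Finite unions of closed arcs with rational endpoints form a ring under finite unions and intersections, so the identity above shows that $\stab(\U)$ has the required form.

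For the "if" direction, let $\stab$ be given as a finite union of closed rational-endpoint arcs and write $\stab^c$ as a finite union of open arcs with rational endpoints. I plan to realize $\stab^c$ as $\bigcup_{X\in\U}\{u:X\subset\H_u\}$ using the following explicit construction. For a rational unit vector $a\in S^1$, let $v(a)\in\Z^2\setminus\{\0\}$ be a primitive integer vector in the direction obtained by rotating $a$ by $-\pi/2$; then $\{u:\<v(a),u\>< 0\}$ is the open semicircle $(a,a+\pi)$ in angle coordinates. For any open arc $J=(a,b)$ of length at most $\pi$ with rational endpoints, the rule $X_J:=\{v(a),-v(b)\}$ satisfies
\[
\{u:X_J\subset\H_u\}\;=\;(a,a+\pi)\cap(b-\pi,b)\;=\;J.
\]
For any component of $\stab^c$ of length exceeding $\pi$, I first subdivide it into finitely many overlapping open sub-arcs of length less than $\pi$ with rational endpoints (possible since rational directions are dense in $S^1$) and apply the previous construction to each. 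Collecting all rules yields a finite update family $\U$ with $\stab(\U)^c=\stab^c$, as required. The trivial edge case $\stab=S^1$ can be handled separately by taking $\U=\{\{(1,0),(-1,0)\}\}$.

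The main obstacle is the length restriction in the "if" direction: the set $\{u:X\subset\H_u\}$, being an intersection of open semicircles, has length at most $\pi$, so a single rule cannot by itself create a long component of $\stab^c$. Overcoming this is what forces the subdivision step, but the density of rational directions in $S^1$ makes this routine. Apart from this, both directions are essentially bookkeeping once the characterization of stability as $\bigcap_X\bigcup_y\{u:\<y,u\>\geq 0\}$ is in hand.
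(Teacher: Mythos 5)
Your proof is correct and follows essentially the same route as the paper's: the ``only if'' direction is the identity $\stab^c=\bigcup_{X\in\U}\bigcap_{y\in X}\{u:\<y,u\><0\}$ together with the observation that each $\{u:\<y,u\><0\}$ is an open semicircle with rational endpoints (the paper's equation for $\stab^c$ plus Lemma~\ref{le:Tclass}), and the ``if'' direction is realized by two-site rules whose elements are lattice vectors perpendicular to the endpoint directions of each complementary arc, exactly the paper's $X_i=\{x_i^-,x_i^+\}$. The one substantive difference is your subdivision step, and it is not superfluous: the paper writes $\stab^c$ as a \emph{disjoint} union of open arcs $T_i=u(\theta_i^-,\theta_i^+)$ (forcing the $T_i$ to be the connected components) and asserts $u(\theta_i^-,\theta_i^-+\pi)\cap u(\theta_i^+-\pi,\theta_i^+)=u(\theta_i^-,\theta_i^+)$, which holds only when $\theta_i^+-\theta_i^-\leq\pi$; as you note, $T(X)$ is an intersection of open semicircles and so can never be an arc longer than $\pi$. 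For a component of $\stab^c$ of length greater than $\pi$ (e.g.\ when $\stab$ is a single rational direction or a short arc), the paper's construction as literally written does not produce the claimed stable set, and your device of covering such a component by finitely many overlapping rational sub-arcs of length less than $\pi$, one rule per sub-arc, is precisely the needed repair. One residual edge case worth a sentence in your write-up: if $\stab=\emptyset$ then $\stab^c=S^1$ is not an arc with two endpoints, but the same covering argument applies verbatim (three overlapping rational arcs of length less than $\pi$ suffice), so nothing is lost.
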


%During the course of the proof of Theorem \ref{th:super} we show that an update family is supercritical if and only if there exist finite sets with infinite closure. Put another way, we show there exists an open semicircle $C\subset S^1$ such that $\stab\cap C=\emptyset$ if and only if there exists a finite set $A\subset\Z^2$ such that the cardinality of $[A]$ is infinite.

\subsection{Organization of the paper}

In the next section we introduce several key aspects of the proofs and a number of important definitions. Owing to the level of generality of our model, the need for some notational complexity is unfortunately unavoidable. However, we have tried to keep the burden as light as possible, and have gathered together many of the definitions for ease of reference. 

In Section~\ref{se:stableset} we prove some basic properties of stable sets, culminating in the proof of Theorem \ref{th:stabclass}. Following that, we move on to the main aim of this paper: the proof of Theorem~\ref{th:main} for critical update families. In Section \ref{se:lower}, we prove the lower bound of Theorem~\ref{th:main}. In Section~\ref{se:upperprelims} we set out the deterministic framework for the growth of `droplets' that we shall use to prove the upper bounds of both Theorem~\ref{th:main} (for critical families) and Theorem~\ref{th:super} (for supercritical families). We then complete the proofs of these theorems in Sections~\ref{se:upper} and~\ref{se:super} respectively.

Finally, in Section \ref{se:open}, we discuss open problems and conjectures, including the behaviour of subcritical families, sharpening our results, and generalizations to higher dimensions.

\section{Some aspects of the proofs}\label{se:sketch}

In this section we introduce several important definitions and ideas from the proofs of Theorems~\ref{th:main} and~\ref{th:super}. We begin with two definitions that may be familiar to the reader in the context of certain specific models (such as the 2-neighbour model or `balanced' threshold models); in each case, the definition we give here generalizes previous definitions. The first, that of a set being `internally filled', is our formal way of saying that a set becomes fully infected in $\U$-bootstrap percolation without external help.

Let us fix an update family $\U$, so that (for example) the operation of taking the closure of a set is well-defined.

\begin{definition}\label{de:ifill}
A set $X\subset\Z^2$ is \emph{internally filled by $A$} if $X\subset[X\cap A]$.
\end{definition}

In the literature, the expression `internally spanned' is frequently used to describe this property; we feel that the word `filled' better emphasizes that the whole of $X$ must be contained in the closure $[X\cap A]$.

The second definition is that of a `$\T$-droplet', which is a finite subset of $\Z^2$ used as a basis for growth.

\begin{definition}\label{de:droplet}
Let $\T\subset S^1$. A \emph{$\T$-droplet} is any finite set of the form
\[
D = \bigcap_{u\in\T} \big(\H_u + a_u\big),
\]
where $\{a_u\in\Z^2:u\in\T\}$ is a collection of sites in $\Z^2$. If $u\in\T$ and $D$ is a $\T$-droplet, then the \emph{$u$-side} of $D$ is the set
\[
\big\{ x\in D \,:\, \< y,u \> \leq \< x,u \> \text{ for all } y\in D \big\}.
\]
\end{definition}

Thus, a droplet is an intersection of half-planes. Since the stable directions for the 2-neighbour process are $(\pm 1,0)$ and $(0,\pm 1)$, if $\T$ is the set of these four unit vectors then a $\T$-droplet is just a rectangle, precisely as a `droplet' is usually taken to be in the 2-neighbour model.

Let us make two remarks about Definition~\ref{de:droplet}. First, we have defined droplets with respect to a set $\T\subset S^1$, not with respect to an update family $\U$. This is because, for a given update family $\U$, we shall need to define and work with different sets of directions to the stable set in both the upper and lower bound parts of the proof, and in each case we shall need a correspondingly different notion of a droplet.

Second, as we have just alluded to, droplets will be important in the proofs of both parts of Theorem~\ref{th:main}. From the point of view of the lower bound, they are important because they restrict growth. If $\T\subset\stab$ then the closure of a subset of a $\T$-droplet is again a subset of the same $\T$-droplet. From the point of view of the upper bound, droplets form the basis of growth. With the help of a bounded number of new sites along one of its edges, under certain circumstances a $\T$-droplet will grow into a new, larger $\T$-droplet. In fact, ensuring that these `certain circumstances' always hold is one of the central components of the proof of the upper bound of Theorem~\ref{th:main}. We return to this in Section~\ref{se:sketchupper}

Now that we have Definitions~\ref{de:ifill} and~\ref{de:droplet}, let us also make a brief remark about symmetry. One might think that the lack of symmetry should make only trivial, technical differences to the arguments, and that its assumption would cause only a little loss of generality, as was supposed in \cite{GG96,GG99}. However, nothing could be further from the truth: symmetry is the most important instrument at one's disposal in proofs of lower bounds for critical probabilities in bootstrap percolation. Before explaining why, let us be clear what we mean by `symmetry'. We mean principally symmetry of the stable set: the property that $u\in\stab$ implies $-u\in\stab$. Almost always in the past, though, authors have assumed much more that this, namely that $X\in\U$ if and only if $-X\in\U$. Why should symmetry be so useful? The main reason is that it allows one to restrict growth one-dimensionally. Suppose that $u$ and $-u$ are stable directions, and that the initial set $A\subset\Z^2$ does not intersect an infinite strip with sides orthogonal to $u$ of at least a certain width depending only on $\U$. Then $[A]$ cannot intersect that strip either. This allows tight (and easy) control of the growth of a droplet in terms only of sites near to its perimeter. It also gives an easy necessary condition for a droplet to be internally filled, which immediately implies, among other things, a certain extremal lemma (which we discuss in the next subsection). Since we do not have symmetry, we have to work much harder to obtain corresponding properties. Indeed, dealing with the lack of symmetry is the single most important challenge of the general model.

\subsection{Lower bounds for critical probabilities}

At the most basic level, our proofs of both the upper and lower bounds of Theorem~\ref{th:main} follow the corresponding proofs of Aizenman and Lebowitz~\cite{AL} for the 2-neighbour model. In~\cite{AL}, the starting point for the lower bound is to show that if $\0\in A_t$ then there exists an internally filled droplet at the scale of roughly $\log t$ within distance $t$ of the origin. Our starting point is similar, except that the property of being internally filled is too strong a property to ask for in general, so instead we use a certain notion of being `approximately internally filled'; we say a little more precisely what we mean by this shortly.
 
In order to find and bound the probability of an approximately internally filled droplet of size roughly $\log t$, we need two main ingredients. The first is a statement that says that if a droplet at a certain scale is approximately internally filled then it necessarily contains approximately internally filled droplets at all smaller scales. We call a statement of this form, or a close variant, an \emph{Aizenman--Lebowitz-type lemma}. The second ingredient is a statement that says that if a droplet $D$ of `size' $k$ is approximately internally filled, then $D\cap A$ must have cardinality at least $\delta k$, where $\delta>0$ may depend on $\U$ but not on $k$. We call a statement of this form an \emph{extremal lemma}.

In the case of the 2-neighbour model, it turns out that one can prove both of these lemmas (with `approximately' replaced by `exactly') using the same idea. (There are many ways of the proving the extremal lemma, but this is the only known way of proving the Aizenman--Lebowitz lemma. In fact, if one has a weak symmetry property, such as a pair of stable directions $u$ and $-u$, and one does not care about optimizing $\delta$, then the extremal lemma is immediate.) The idea in both cases is to use an algorithm called the \emph{rectangles process}, which builds up the closure of a finite set of sites by `merging' nearby internally filled droplets one-by-one. To be precise, the algorithm proceeds by looking for two rectangles within $\ell_1$ distance $2$ of each other, and replacing them by the smallest rectangle containing both. The two crucial properties of the algorithm are, first, that at each step, every rectangle in the current collection is internally filled, and second, that the perimeter length of the largest rectangle in the current collection does not more than double between steps.

The natural generalization of the rectangles process -- replacing `rectangles' by `$\T$-droplets' for some $\T\subset S^1$ -- is of no use. The issue is that the closure of two nearby $\T$-droplets need not be a larger $\T$-droplet, so we lose the first of the two crucial properties mentioned above, namely that the $\T$-droplets at each stage of the process be internally filled. Our solution, as mentioned above, is to weaken the requirement that the droplets be internally filled. The precise way in which we do this is to introduce a new, related algorithm, which we call the \emph{covering algorithm}. The algorithm starts by placing a $\T$-droplet (for a certain set $\T$) around as many disjoint `breakthrough blocks' in a finite initial set $K\subset\Z^2$ as possible, where `breakthrough blocks' are defined as follows.

\begin{definition}
A \emph{breakthrough block} for $X\in\U$ is any set of the form $X \cap (\H_u)^c$, or a translation by an element of $\Z^2$ of such a set, for some $u\in\stab$.
\end{definition}

The idea is that a breakthrough block is a minimal set of sites that allows at least one new site to become infected with the help of a $\T$-droplet, where $\T\subset\stab$. The covering algorithm then proceeds by merging nearby $\T$-droplets in a manner similar to the original rectangles process. One can show that if the definition of the covering algorithm we have just sketched is suitably formalized, and if the final collection of $\T$-droplets at the end of algorithm is $D_1,\dots,D_T$, then $[K]\setminus K \subset D_1\cup\dots\cup D_T$, and therefore that the covering algorithm `approximately dominates' the $\U$-bootstrap process. Crucially, one can also prove an Aizenman--Lebowitz-type lemma and an extremal lemma for so-called `covered droplets', which are simply droplets $D$ such that the set $\{D\}$ is a possible output of the covering algorithm with input $D\cap A$. Recall that these are the two main lemmas we need to prove the lower bound.

The precise definition of the covering algorithm, statements and proofs of the Aizenman--Lebowitz-type lemma and the extremal lemma, and the proof of the lower bound of Theorem~\ref{th:main} are given in Section~\ref{se:lower}.

\subsection{Upper bounds for critical probabilities}\label{se:sketchupper}

For most of this discussion we shall assume that $\U$ is critical. At the end we indicate how our methods for proving the upper bound of Theorem~\ref{th:main} for critical families can be applied to prove Theorem~\ref{th:super} for supercritical families as well.

The first issue we encounter with the upper bounds is that the stable set $\stab$ may be quite large: indeed it may contain a closed semicircle. This means that droplets may not in general grow in all directions. However, in order to show that droplets grow in at least one direction, we only need a weaker property, which follows from the definition of a critical update family and Theorem~\ref{th:stabclass}. This property is that there exists an open semicircle $C\subset S^1$ such that $|\stab\cap C|<\infty$.

We remarked after Definition \ref{de:droplet} that ensuring that droplets grow into droplets is one of the central parts of the proof of the upper bound of Theorem \ref{th:main}. What we mean by `droplets growing into droplets', or more specifically, `$\T$-droplets growing into $\T$-droplets', is that for each $\T$-droplet $D$ and each $u\in\T$, there exists a set $Z\subset\Z^2\setminus D$ of bounded size (that is, of size depending on $\U$ and $\T$ but not on $D$) such that $[D\cup Z]$ contains an entire new `row' of sites on the $u$-side, extending all the way to the corners of the $u$-side. Thus, $[D\cup Z]$ is itself a $\T$-droplet. Once we have a statement of this form, then our proof of the upper bound of Theorem~\ref{th:main} follows by methods similar to those used by van Enter and Hulshof~\cite{vEH} to determine the critical probability of a particular so-called `unbalanced' model.

The growth property that we have just described itself hinges on two further properties. The first is that if $u\in S^1$ is such that there exists an open interval in $I\subset S^1$ with $u\in I$ but such that $\stab\cap I\setminus\{u\}=\emptyset$, then there exists a finite set $Z\subset\ell_u$ such that $\ell_u\subset [\H_u\cup Z]$, where
\[
\ell_u := \big\{ x\in\Z^2 \,:\, \< x,u \> = 0 \}.
\]
(Of course, if $u\notin\stab$ then $Z=\emptyset$ will do here.) This property, which is Lemma~\ref{le:ublock}, will allow us to grow to within a constant distance of the corners of the $\T$-droplet. However, if we choose our set $\T$ to be such that $\T\subset\stab$ -- as in all previous proofs of this type of theorem -- then the new row of infected sites \emph{does not} in general grow all the way to the corners. The second property that we need in order to rectify this problem is that there exists a finite set $\quasi\subset S^1$, which we call the set of \emph{quasi-stable directions}, such that if $\T$ is the union of $\quasi$ and a certain subset of $\stab$, then this corners property holds. This is the content of Lemma~\ref{le:quasi}.

Let us briefly observe that both of the properties of the previous paragraph trivially hold in the case of the 2-neighbour model: for the first, we may always take $Z=\{\0\}$, and for the second, we may take $\quasi=\emptyset$. This again serves to highlight that many of the issues that arise in connection with the general model do not appear at all in connection with the 2-neighbour model.

In Section~\ref{se:upperprelims} we establish the key growth properties described above. We then apply them in Section~\ref{se:upper} to prove the upper bound of Theorem~\ref{th:main} for critical families. In Section~\ref{se:super}, we show that the framework we have set up for critical families may be adapted to prove Theorem~\ref{th:super} for supercritical families, by observing that all of the sets $Z$ mentioned above may be taken to be empty if $\U$ is supercritical and the open semicircle $C\subset S^1$ is chosen so that $\stab\cap C=\emptyset$.

\subsection{Definitions and conventions}\label{se:defs}

Let us collect a few of the most important definitions and conventions that we shall use throughout the paper. We begin with three conventions:
\begin{itemize}
\item The set $A$ is always a $p$-random subset of $\Z^2$, unless explicitly stated otherwise.
\item The update family $\U$ is fixed throughout. Constants will be allowed to depend on $\U$ (and hence on $\stab$, etc.), but not on $p$.
\item The norm $\|\cdot\|$ denotes the Euclidean norm $\|\cdot\|_2$.
\end{itemize}

Moving on to definitions, we begin with certain subsets of the plane. Recall that we have already defined the half-plane $\H_u=\{x\in\Z^2:\<x,u\> <0\}$ and the line $\ell_u=\{x\in\Z^2:\<x,u\>=0\}$. We shall also need the shifted half-plane:
\[
\H_u(a) := \big\{ x\in\Z^2 \,:\, \<x-a,u\> < 0 \big \},
\]
where $a\in\R^2$. If $a\in\Z^2$ then we have $\H_u(a)=\H_u+a$, but this is not true in general. Next, let $u\in S^1$ be rational (recall that this means that $u$ has either rational or infinite slope with respect to the standard basis vectors of $\R^2$) and consider the lines orthogonal to $u$ that intersect $\Z^2$. These lines are parallel and discrete, and there is a natural integer-valued indexing for them. Let $\ell_u(0)$ be the line intersecting the origin, and for each $i\in\Z$, let $\ell_u(i)$ be the $i$th line in the direction of $u$.

Next, let $\TT:=\R/2\pi\Z$. We shall often need to change between elements of $S^1$ and elements of $\TT$, so we define the following natural bijection. Let $u:\TT\to S^1$ be the function
\[
u(\theta) := (\cos\theta,\sin\theta),
\]
and let $\theta:S^1\to\TT$ be the inverse function, so that if $u=u(\theta)$ then $\theta(u)=\theta$. We extend the domain of the function $\theta$ to the whole of $\R^2\setminus\{\0\}$ in the obvious way: if $x\in\R^2\setminus\{\0\}$ then we define $\theta(x)$ to be equal to $\theta(x/\|x\|)$. We also use standard function notation, so that for example $u(T):=\{u(\theta):\theta\in T\}$ for a subset $T\subset\TT$, and we abuse notation slightly by abbreviating $u\big((\theta_1,\theta_2)\big)$ to $u(\theta_1,\theta_2)$.

Now let $\T\subset S^1$ and let $u,v\in\T$. We shall say that $u$ and $v$ are \emph{consecutive} in $\T$ if $\T \cap u\big(\theta(u),\theta(v)\big) = \emptyset$. (Note that this is not a symmetric relation: the statements that $u$ and $v$ are consecutive in $\T$, and that $v$ and $u$ are consecutive in $\T$, are not equivalent.)

For each update rule $X$, let
\[
T(X) := \{u\in S^1 \, : \, X \subset \H_u\},
\]
and let
\[
\Theta(X) := \{\theta\in\TT \, : \, u(\theta)\in T(X) \} = \{\theta\in\TT \, : \, X \subset \H_{u(\theta)}\}.
\]
We think of $T(X)$ as the set of directions \emph{destabilized} by the update rule $X$. If $u\in\stab^c$ then there must exist a rule $X\in\U$ such that $X\subset\H_u$, which implies that
\begin{equation}\label{eq:Sc}
\stab^c = \bigcup_{X\in\U} T(X).
\end{equation}

Finally, we shall need the following measure of the \emph{diameter} of a finite set $K\subset\Z^2$:
\[
\diam(K) := \max \big\{ \|x-y\| \,:\, x,y\in K \big\}.
\]

\section{The structure of stable sets}\label{se:stableset}

In this section we derive a number of basic properties of the stable set and introduce some important definitions relating to the geometry of the stable set. Our main aim is to prove Theorem~\ref{th:stabclass}, the classification of stable sets.

We begin with a simple lemma that establishes the dichotomy $[\H_u]\in\big\{\H_u,\Z^2\}$ for every $u\in S^1$.

\begin{lemma}
Let $u\in S^1$ be an unstable direction for $\U$. Then $[\H_u]=\Z^2$.
\end{lemma}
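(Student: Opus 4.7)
The plan is to exploit the finiteness of any update rule contained in $\H_u$ to push the infected region out, layer by parallel layer, until every site of $\Z^2$ is reached.

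First I would note that since $u$ is unstable, equation~\eqref{eq:Sc} supplies an update rule $X\in\U$ with $X\subset\H_u$, i.e.\ $\langle x,u\rangle<0$ for every $x\in X$. Because $X$ is finite and non-empty, the quantity
\[
c := \max_{x\in X}\langle x,u\rangle
\]
is a strictly negative real number, so $|c|>0$. This positive gap is the engine of the growth: any translate $X+y$ of the rule lies inside $\H_u$ whenever $\langle y,u\rangle<|c|$, because then $\langle x+y,u\rangle\leq c+\langle y,u\rangle<0$ for all $x\in X$.

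Next I would set up an induction, writing $\H_u^{(k)}:=\{y\in\Z^2:\langle y,u\rangle<k|c|\}$, so $\H_u^{(0)}=\H_u$. I would prove $A_k\supset\H_u^{(k)}$ for every $k\geq 0$, starting from $A_0=\H_u$. For the inductive step, suppose $A_k\supset\H_u^{(k)}$ and let $y\in\Z^2$ satisfy $\langle y,u\rangle<(k+1)|c|$. Then for each $x\in X$,
\[
\langle x+y,u\rangle\leq c+\langle y,u\rangle<-|c|+(k+1)|c|=k|c|,
\]
so $X+y\subset\H_u^{(k)}\subset A_k$, and the $\U$-bootstrap rule puts $y$ into $A_{k+1}$.

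Finally, since $|c|>0$, every site $y\in\Z^2$ satisfies $\langle y,u\rangle<k|c|$ for all sufficiently large $k$, so $\bigcup_k\H_u^{(k)}=\Z^2$. Therefore $[\H_u]=\bigcup_t A_t\supset\bigcup_k\H_u^{(k)}=\Z^2$, as required. The only real content is identifying that a single destabilising rule $X$ forces this monotone one-step enlargement by the fixed margin $|c|$; nothing else is delicate, and there is no genuine obstacle — the argument is essentially the standard observation that an unstable half-plane is ``strictly'' unstable by a positive margin.
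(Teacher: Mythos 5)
Your proof is correct, and its engine --- a destabilising rule $X\subset\H_u$ sits at depth $|c|>0$ inside the half-plane, so the infected region advances by $|c|$ in the direction of $u$ at every time step --- is exactly the paper's argument, except that the paper splits into cases according to whether $u$ is rational (advancing line by line via $\ell_u(j)\subset A_{j+1}$) or irrational (advancing by a lattice vector $b$ with $0<\langle b,u\rangle<|c|$). By running the induction directly on the half-planes $\H_u^{(k)}$, which need not be lattice translates of $\H_u$, you avoid that case split entirely, a small but genuine streamlining.
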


\begin{proof}
Let $u\in S^1$ be unstable and let $a\in\Z^2\setminus\H_u$ and $X\in\U$ be such that $a+X\subset\H_u$. Then $y+X\subset\H_u$ for all $y\in\ell_u$, since if $x\in X$ and $y\in\ell_u$ then
\[
\< y+x,u \> = \< x,u \> \leq \< a+x,u \> < 0.
\]
Thus $\ell_u\subset A_1\setminus A_0$.

If $u$ is rational then we are done: $\ell_u(j)\subset A_{j+1}$ for all integers $j\geq 0$, so $[\H_u]=\Z^2$. If $u$ is irrational, on the other hand, then all we know so far is that $\0\in A_1\setminus A_0$. We claim that there exists a site $b\in\Z^2$ such that $\< b,u \> >0$ and $\H_u(jb)\subset A_j$ for all $j\in\N$. Let
\[
\delta := \sup \big\{ \lambda\in\R \, : \, X \subset \H_u(-\lambda u) \big\} = \min \big\{ -\<x,u\> \,:\, x\in X \big\}.
\]
If $y\in\H_u(\delta u)$ then $y+X\subset\H_u$, so we have $\H_u(\delta u)\subset A_1$. Also, the set $X$ is finite and contained in $\H_u$, so $\delta>0$. Thus, since $u$ is irrational, there exists a site $b\in\H_u(\delta u)\setminus\H_u$, and we have $\H_u(b)\subset\H_u(\delta u)\subset A_1$. Now $\H_u(b)$ has a site on its boundary, namely $b$, so it is congruent to $\H_u$, and therefore $\H_u(jb)\subset A_j$ for all $j\in\N$, as claimed.
\end{proof}

The next lemma, together with the observation in \eqref{eq:Sc}, is one of the two implications in Theorem~\ref{th:stabclass}.

\begin{lemma}\label{le:Tclass}
Let $X$ be an update rule. Then $T(X)$ is either empty or an open interval in $S^1$ with rational end-points.
\end{lemma}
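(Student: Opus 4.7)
The plan is to view $T(X)$ as the intersection with $S^1$ of a certain open convex cone in $\R^2$. Concretely, for each $x\in X$ let $H_x:=\{u\in\R^2:\<x,u\><0\}$, which is an open half-plane through the origin, and set $C:=\bigcap_{x\in X} H_x$. By the definition of $T(X)$ we have $T(X)=C\cap S^1$. First I would observe that $C$ is an open convex cone (an intersection of open half-planes through $\0$), and that $C$ does not contain any pair of antipodal points: if $\<x,u\><0$ then $\<x,-u\>>0$. Hence $T(X)$ is contained in some open semicircle.

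Next I would show that any open convex cone in $\R^2$ contained in an open half-plane meets $S^1$ in either $\emptyset$ or a single open arc of length at most $\pi$. This is elementary planar convexity: if $u_1,u_2\in C\cap S^1$ then the planar segment from $u_1$ to $u_2$ lies in $C$, and then the open arc of $S^1$ from $u_1$ to $u_2$ (on the side of the shorter arc, i.e.\ the one lying in the open half-plane containing $C$) is obtained by radially projecting that segment, so it lies in $C\cap S^1$. Openness of $C$ ensures the resulting arc is open. This already gives the arc structure; the case $|X|=1$ in which $T(X)$ is a full open semicircle fits into this picture.

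Finally I would identify the end-points. The boundary $\partial C$ is a union of at most two rays emanating from $\0$, and each such extremal ray lies on the boundary line $\{u:\<x,u\>=0\}$ of some $H_x$, $x\in X$. The two unit vectors on that line are $\pm x^\perp/\|x\|$, where $x^\perp$ is the $90^\circ$ rotation of $x$; since $x\in\Z^2\setminus\{\0\}$, the slope of $x^\perp$ is $-x_1/x_2$ (or infinite when $x_2=0$), so these unit vectors are rational in the sense defined before Theorem~\ref{th:stabclass}. The end-points of the arc $T(X)$ are precisely these unit vectors (one for each extremal ray, or with the two end-points antipodal in the case $|X|=1$), so they are rational. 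The strict inequalities in the definition of $H_x$ ensure these end-points are excluded from $T(X)$, confirming that $T(X)$ is open.

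The only real thing to watch out for is the degenerate shapes the cone $C$ can take: it may be empty (giving $T(X)=\emptyset$, the first alternative in the lemma), a full open half-plane (when all $x\in X$ are positive multiples of a single vector, giving a full open semicircle), or a proper open sector. In each non-empty case the end-point analysis above goes through unchanged, so there is no substantive obstacle; the proof is essentially a planar convexity computation.
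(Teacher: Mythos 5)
Your proposal is correct and is essentially the paper's argument: the paper likewise writes $T(X)=\bigcap_{x\in X}T(x)$ where $T(x)=\{u:\<x,u\><0\}$ is the open semicircle $u\big(\theta(x)-\pi/2,\theta(x)+\pi/2\big)$ with rational end-points, and concludes that the finite intersection of these length-$\pi$ open arcs is empty or a single open arc with rational end-points. You simply phrase the same decomposition in terms of the convex cone $\bigcap_x H_x$ and spell out the convexity and end-point details that the paper leaves implicit.
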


\begin{proof}
For each site $x\in\Z^2\setminus\{\0\}$ define the set $T(x) = \{u:\< x,u\> < 0\}$. It is easy to see that
\begin{equation}\label{eq:Tx}
T(x) = u\big(\theta(x)-\pi/2,\theta(x)+\pi/2\big).
\end{equation}
Furthermore, since $u\big(\theta(x)\big)=x/\|x\|$ is rational, so too are $u\big(\theta(x)-\pi/2\big)$ and $u\big(\theta(x)+\pi/2\big)$. Thus, $T(x)$ is an open interval with rational end-points.

Now simply notice that
\begin{equation}\label{eq:TX}
T(X) = \bigcap_{x\in X} T(x),
\end{equation}
so $T(X)$ is a finite intersection of open intervals with rational end-points, each interval of length exactly $\pi$. The assertion of the lemma follows.
\end{proof}

\begin{remark}
Using identities for $\stab^c$ in \eqref{eq:Sc}, for $T(X)$ in \eqref{eq:TX}, and for $T(x)$ in \eqref{eq:Tx}, we have
\begin{equation}\label{eq:findstab}
\stab^c = \bigcup_{X\in\U} \bigcap_{x\in X} u\big(\theta(x)-\pi/2,\theta(x)+\pi/2\big).
\end{equation}
From this it follows immediately that there exists a polynomial time (in the sum of the cardinalities of the update rules, say) algorithm for computing the stable set $\stab$ of an update family $\U$.
\end{remark}

We are now able to complete the proof of Theorem \ref{th:stabclass}. Recall that the theorem is a classification of stable sets: it says that a subset of the circle is the stable set of some update family if and only if it can be expressed as a finite union of closed intervals with rational end-points.

\begin{proof}[Proof of Theorem \ref{th:stabclass}]
One implication -- that stable sets are finite unions of closed intervals with rational end-points -- is a consequence of Lemma \ref{le:Tclass} and \eqref{eq:Sc}. Therefore our task is just to show the converse. To that end, let $\stab\subset S^1$ be such that its complement may be written in the form
\[
\stab^c = \bigcup_{i=1}^m T_i,
\]
where
\[
T_i = u(\theta_i^-,\theta_i^+)
\]
for $i=1,\dots,m$, and the $T_i$ are disjoint and have rational end-points. Since $u(\theta_i^-)$ and $u(\theta_i^+)$ are rational, for each $i$, it follows that there exists a non-zero site $x_i^-\in\ell_{u(\theta_i^-)}$ to the right of the origin as one looks in the direction of $u(\theta_i^-)$, and a non-zero site $x_i^+\in\ell_{u(\theta_i^+)}$ to the left of the origin as one looks in the direction of $u(\theta_i^+)$. Let
\[
X_i := \{x_i^-,x_i^+\}
\]
be update rules for each $i=1,\dots,m$, and let $\U:=\{X_1,\dots,X_m\}$ be an update family and $\stab'=\stab'(\U)$ its stable set. We claim that $\stab'=\stab$.

\begin{figure}[ht]
  \centering
  \begin{tikzpicture}[>=latex] 
    \draw (170:1) -- (-10:5);
    \draw (60:1) -- (-120:5);
    \draw[->] (-10:2.5) -- ++(80:1);
    \path (-10:2.5) ++(80:1.3) node {$u(\theta_i^-)$};
    \draw[->] (-120:2.5) -- ++(150:1);
    \path (-120:2.5) ++(150:1.6) node {$u(\theta_i^+)$};
    \node[circle,fill,inner sep=0,minimum size=0.08cm] at (0,0) {};
    \node at (-0.1,0.3) {$\0$};
    \node[circle,fill,inner sep=0,minimum size=0.15cm,label=-100:$x_i^-$] at (-10:4) {};
    \node[circle,fill,inner sep=0,minimum size=0.15cm,label=-30:$x_i^+$] at (-120:1.8) {};
  \end{tikzpicture}
  \caption{The rule $X_i$ consists of the two sites $x_i^-$ and $x_i^+$. It destabilizes the interval $(\theta_i^-,\theta_i^+)$.}
  \label{fi:stabclass}
\end{figure}
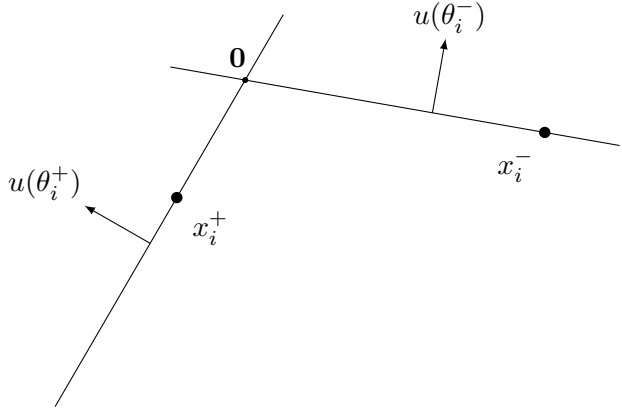

By \eqref{eq:Sc},
\[
(\stab')^c = \bigcup_{i=1}^m T(X_i),
\]
so it is enough to show that $T(X_i)=T_i$ for every $i$. But
\begin{align*}
T(X_i) &= \{u:\< x_i^-,u\> < 0\} \cap \{u:\< x_i^+,u\> < 0\} \\
&= u(\theta_i^-,\theta_i^-+\pi) \cap u(\theta_i^+-\pi,\theta_i^+) \\
&= u(\theta_i^-,\theta_i^+),
\end{align*}
which is the definition of $T_i$.
\end{proof}

\section{The lower bound for critical families}\label{se:lower}

The next three sections are the primary focus of the paper. We study critical\footnote{In Section~\ref{se:upperprelims} we only assume that $\U$ is critical or supercritical.} $\U$-bootstrap percolation with the aim of proving Theorem~\ref{th:main}: the lower bound in this section and the upper bound in Section~\ref{se:upper}. Recall that an update family is critical if the following two properties of its stable set $\stab$ hold. First, $\stab\cap C$ is non-empty for every open semicircle $C\subset S^1$, and second, there exists an open semicircle $C\subset S^1$ not containing any strongly stable directions, where a stable direction $u$ is said to be strongly stable if it is contained in an open interval of stable directions.

%In this section we shall prove the lower bound of Theorem~\ref{th:main} with the following value of $\alpha_1$. 
Recall that a breakthrough block for $X\in\U$ is any set of the form $X\cap(\H_u)^c$, where $u\in\stab$. (It is worth observing that there are no empty breakthrough blocks, because the direction $u$ is assumed to be stable.) Let $\mathcal{B}$ be the collection of sets $B\subset\Z^2$ such that $B$ is a breakthrough block for some $X\in\U$, and let
\begin{equation}\label{eq:alpha1}
\alpha_1 = \alpha_1(\U) := \min \big\{ |B| \,:\, B\in\mathcal{B} \big\}.
\end{equation}
The lower bound of Theorem~\ref{th:main} follows from the following theorem. We remind the reader that the stopping time $\tau$ is the minimal $t\geq 0$ such that $\0\in A_t$.
%In light of the classification of possible stable sets given by Theorem \ref{th:stabclass}, the second condition is equivalent to the statement that there exists an open semicircle $C\subset S^1$ such that $\stab\cap C$ has finite cardinality.

\begin{theorem}\label{th:lower}
Let $\U$ be a critical update family and let
\[
p \leq \left(\frac{\lambda}{\log t}\right)^{1/\alpha_1},
\]
where $\lambda>0$ is sufficiently small. Then $\tau\geq t$ with high probability as $t\to\infty$.
\end{theorem}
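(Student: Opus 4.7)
The plan is to follow the Aizenman-Lebowitz strategy, adapted to general critical families via the covering algorithm sketched in Section~\ref{se:sketch}. First I would fix a finite set of stable directions $\T\subset\stab$ such that $\T\cap C\neq\emptyset$ for every open semicircle $C\subset S^1$; this is possible precisely because $\U$ is critical, and it guarantees that every $\T$-droplet is bounded, with perimeter comparable to its diameter. I would also arrange that for every rule $X\in\U$ and every $u\in\stab$ destabilized by $X$, the corresponding breakthrough block $X\cap(\H_u)^c$ can be enclosed in a $\T$-droplet of diameter bounded by a constant $\Delta_0=\Delta_0(\U)$.

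Next I would formalize the \emph{covering algorithm}. Starting from a finite configuration $K\subset\Z^2$, one picks a maximal disjoint collection of breakthrough blocks in $K$ and encloses each in the smallest $\T$-droplet containing it. The algorithm then iteratively merges any two current $\T$-droplets within $\ell_1$-distance at most a constant $\Delta=\Delta(\U)$, replacing them by the smallest $\T$-droplet containing their union, until no such pair remains. Call the output droplets \emph{covered}. Two properties must be proved: (i) \emph{dominance}, $[K]\setminus K\subset D_1\cup\cdots\cup D_T$, by induction on the $\U$-bootstrap time, using that any newly infected site comes from a rule whose base contains a breakthrough block in some stable direction; and (ii) \emph{perimeter doubling}, namely that the maximum diameter at most doubles between merges, since each merge joins pairs within constant distance and $\T$-droplets are convex polygons with boundedly many sides.

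From these two properties the two workhorse lemmas follow. An \emph{Aizenman-Lebowitz-type lemma} asserts that if some covered droplet has diameter at least $k$, then for every $1\le k'\le k$ it contains a covered sub-droplet of diameter in $[k',2k']$; this is immediate from perimeter doubling, by running the algorithm backwards. An \emph{extremal lemma} asserts that a covered droplet $D$ of diameter $k$ arises from the merging of at least $c_1 k$ initial disjoint breakthrough blocks (each of size $\geq\alpha_1$) inside $D\cap A$, because each merge adds at most $O(1)$ to the diameter while the initial blocks were pairwise disjoint. Hence, after a union bound over block types and positions,
\[
\P_p\big(D\text{ is covered}\big)\le\binom{C k^2}{c_1 k}\cdot C^{c_1 k}\cdot p^{\alpha_1 c_1 k}\le\big(C' k\,p^{\alpha_1}\big)^{c_1 k}.
\]

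To conclude: on $\{\tau<t\}$, the site $\0$ is infected by time $t$, and a standard reduction restricts $A$ to a box $B$ of side $O(t)$ around $\0$ for which $\0$ is still infected. By (i) applied to $A\cap B$, $\0$ lies in some covered droplet, and hence there exists a covered droplet of diameter at least some fixed $k_0(\U)$; the Aizenman-Lebowitz lemma then produces a covered droplet of diameter in $[k^*,2k^*]$ within $B$, for $k^*:=\lceil\alpha^{-1}\log t\rceil$. A union bound over $O(t^2)$ positions and $k^{*O(1)}$ combinatorial shapes gives
\[
\P_p(\tau<t)\le t^2\cdot k^{*O(1)}\cdot\big(C' k^*\,p^{\alpha_1}\big)^{c_1 k^*},
\]
which is $o(1)$ provided $p^{\alpha_1}\le\lambda/\log t$ with $\lambda$ sufficiently small. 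The main obstacle is establishing the extremal lemma without symmetry: the two-neighbour proof uses a symmetric pair $\{u,-u\}\subset\stab$ to confine growth to one-dimensional strips and count sites there, but in our general setting no such pair need exist. One must instead rely on the internal geometry of the covering algorithm --- the boundedness of $\T$-droplets (guaranteed by the critical hypothesis via our choice of $\T$), the per-merge bound on the diameter increase, and the maximal-disjointness of the initial step --- to force $\Omega(k)$ genuinely disjoint breakthrough blocks inside every covered droplet of diameter $k$.
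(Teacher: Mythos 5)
Your overall strategy is the same as the paper's: fix a finite set of stable directions whose droplets are bounded (the paper's $\stabl$), run a covering algorithm seeded by a maximal collection of disjoint breakthrough blocks, prove a dominance lemma ($[K]\setminus K$ is contained in the union of the output droplets), an extremal lemma ($\Omega(k)$ disjoint breakthrough blocks, hence a factor $p^{\alpha_1\Omega(k)}$), and an Aizenman--Lebowitz-type lemma, then do a union bound at scale $\log t$. Those ingredients are set up correctly (modulo small imprecisions: in the dominance proof the key case is a rule whose footprint protrudes from a single output droplet, where the protruding part is a translated breakthrough block for the stable direction of that side; and the per-merge control should be phrased via subadditivity of diameters, which gives a tripling rather than a doubling window).

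The genuine gap is in the final reduction. From $\tau<t$ you correctly get that either $\0\in A$ or $\0$ lies in some covered droplet inside a box of side $O(t)$, but you then assert that this yields a covered droplet of diameter at least $k^*\approx\log t$ via the Aizenman--Lebowitz lemma. That lemma only goes \emph{downward} in scale: a covered droplet of diameter at least $k^*$ contains covered sub-droplets at all scales up to $k^*$; it cannot manufacture a large covered droplet from a small one. The covered droplet containing the origin may well have diameter $O(1)$ (e.g.\ a single breakthrough block adjacent to the origin produces one, with probability of order $p^{\alpha_1}$), in which case no droplet of diameter $\approx\log t$ need exist, and your claimed bound $\P_p(\tau<t)\leq t^2\,k^{*\,O(1)}\,(C'k^*p^{\alpha_1})^{c_1k^*}$ is false as stated, since $\P_p(\tau<t)=\Omega(p^{\alpha_1})$, which is only about $\lambda/\log t$. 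The paper closes exactly this hole with a separate estimate (its Lemma 5.3): a union bound over covered droplets of every diameter $k\leq O(\log t)$ that \emph{contain the origin} (only $k^{O(1)}$ positions for each $k$), summing $k^{O(1)}\bigl(O(\epsilon k)/\log t\bigr)^{\delta k}$ and splitting the sum at $k=(\log\log t)^2$ to get $o(1)$ as $p\to0$. You need to add this case — droplets of intermediate size between $O(1)$ and $\log t$ containing the origin — and accept that the resulting failure probability is $o(1)$ only, not polynomially small in $t$; the large-droplet case is then handled exactly as you propose, by applying the Aizenman--Lebowitz lemma to a covered droplet that is \emph{not} contained in the $O(\log t)$-neighbourhood of the origin and hence genuinely has diameter at least $\log t$.
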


It follows from the definition of a critical update family that if $\U$ is critical then there exists a set $\stabl=\stabl(\U)$ of 3 or 4 stable directions such that $\0$ lies in the interior of the convex hull of $\stabl$. (Equivalently, there exist (finite) $\stabl$-droplets.) Throughout this section we fix the critical update family $\U$ and the set $\stabl$ of 3 or 4 stable directions, and we shall use the convention that all droplets are $\stabl$-droplets.

In Subsection~\ref{se:cover} we define the `covering algorithm', which will be our approximation to the rectangles process of the 2-neighbour model, and we define what it means for a droplet to be `covered', which will be our notion of being `approximately internally filled'. We use these definitions to derive an Aizenman--Lebowitz-type lemma (Lemma~\ref{le:AL}) and an extremal lemma (Lemma~\ref{le:extremal}) for `covered' droplets. In Subsection~\ref{se:lowerproof} we assemble the tools of the previous subsection to complete the proof of Theorem~\ref{th:lower}.

\subsection{Covered droplets}\label{se:cover}

We begin by defining the `covering algorithm', which takes as its input a finite set $K\subset\Z^2$ of sites and returns a collection $\D$ of $\stabl$-droplets that approximately cover $[K]$, in a sense made precise in Lemma~\ref{le:coverclosed}.

Let $\hat{D}$ be a fixed $\stabl$-droplet such that $X\subset\hat{D}$ for all $X\in\U$. Given a finite set $K\subset\Z^2$, let $D(K)$ be the minimal $\stabl$-droplet containing $K$.

\begin{definition}\label{de:covering} \emph{(Covering algorithm.)}
Let $\U$ be critical and let $K\subset\Z^2$ be finite. Let $B_1,\dots,B_{k_0}$ be a maximal collection of disjoint breakthrough blocks in $K$, and let $\D^0:=\{D_1^0,\dots,D_{k_0}^0\}$ be a collection of copies of $\hat{D}$ such that $B_i\subset D_i^0$ for each $i=1,\dots,k_0$. This is step 0. Now repeat the following procedure for each $t\geq 0$ until STOP. At the start of step $t+1$ of the algorithm, there is a collection $\D^t=\{D_1^t,\dots,D_{k_t}^t\}$ of droplets, where $k_t=k_0-t$. If there do not exist indices $i$ and $j$ and a site $x\in\Z^2$ such that
\begin{equation}\label{eq:dropletsmerge}
D_i^t \cap (\hat{D}+x) \neq \emptyset \qquad \text{and} \qquad D_j^t \cap (\hat{D}+x) \neq \emptyset
\end{equation}
then STOP. If there do exist such $i$, $j$ and $x$, then choose one such triple. Construct $\D^{t+1}$ from $\D^t$ by deleting from it $D_i^t$ and $D_j^t$ and adding to it $D(D_i^t\cup D_j^t)$:
\[
\D^{t+1} := \big(\D^t \setminus \{D_i^t, D_j^t\} \big) \cup \{D(D_i^t\cup D_j^t)\}.
\]
This is the end of step $t+1$. The output of the algorithm is the set $\D^T:=\{D_1^T,\dots,D_k^T\}$, where $k=k_0-T$, and $T$ is the last $t$ before STOP.
\end{definition}

\begin{remark}
We could instead have defined the covering algorithm by taking our initial collection of droplets $\D^0$ to consist of a copy of $\hat{D}$ around every \emph{site} of $K$, rather than around every element of a maximal collection of disjoint breakthrough blocks. The effect of this would have been that we would have obtained $\alpha_1=1$ as the bound in Theorem~\ref{th:lower} for every critical update family. We use the present method because in some cases it gives better bounds, and because it may be useful in potential future refinements of our results.
\end{remark}

We note that the covering algorithm is commutative in the sense that the order in which droplets are combined does not affect the output of the algorithm. We omit the proof of this property, since we shall not actually need to use it. On the other hand, the output of the covering algorithm \emph{does} depend on the particular choice of disjoint breakthrough blocks $B_1,\dots,B_{k_0}$, and on the particular translates of $\hat{D}$ we choose to cover those breakthrough blocks.

\begin{definition}
We say that $\D$ is a \emph{cover} of a finite set $K\subset\Z^2$ if it is a possible output of the covering algorithm with inputs $\U$ and $K$. An $\stabl$-droplet $D$ is \emph{covered} if $\D=\{D\}$ for some cover $\D$ of $D\cap A$.
\end{definition}

The following lemma allows us to bound the closure of a finite set in terms of an arbitrary output of the covering algorithm applied to that set; it is the sense in which the covering algorithm approximately dominates the bootstrap process.

\begin{lemma}\label{le:coverclosed}
Let $\D=\{D_1^T,\dots,D_k^T\}$ be an output of the covering algorithm applied to a finite set $K\subset\Z^2$. Then
\[
[K]\setminus K \subset D_1^T\cup\dots\cup D_k^T.
\]
\end{lemma}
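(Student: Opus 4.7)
My plan is to prove by induction on $t\geq 0$ that $A_t\setminus K\subset D_1^T\cup\cdots\cup D_k^T$, from which the lemma follows at once since $[K]=\bigcup_{t\geq 0}A_t$. The base case $t=0$ holds vacuously. For the inductive step let $y\in A_{t+1}\setminus K$; only the case $y\in A_{t+1}\setminus A_t$ is non-trivial, and in this case there is a rule $X\in\U$ with $y+X\subset A_t$. I must show that $y\in D_i^T$ for some $i$.

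Fix any $u\in\stabl$. Stability of $u$ implies $X\not\subset\H_u$, so the translated block $C_u:=(y+X)\cap(\H_u+y)^c=y+(X\cap(\H_u)^c)$ is a non-empty breakthrough block lying in $A_t$. I then find a witness site $w_u\in C_u$ that already sits in the cover, by a dichotomy. If $C_u\not\subset K$, any site of $C_u\setminus K$ lies in $A_t\setminus K\subset\bigcup_i D_i^T$ by the inductive hypothesis. If instead $C_u\subset K$, then $C_u$ is itself a breakthrough block in $K$, and by the \emph{maximality} of the disjoint collection $B_1,\dots,B_{k_0}$ it must intersect some $B_j$; any site in $C_u\cap B_j$ lies in $D_j^0$, which in turn is contained in some final droplet $D_{i(u)}^T$ (since every merging step replaces two droplets by a larger droplet containing both).

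Because $X\subset\hat D$, each such $w_u$ lies in $\hat D+y$, so $\hat D+y$ meets every $D_{i(u)}^T$. The stopping condition of the algorithm — no translate of $\hat D$ can simultaneously intersect two distinct droplets of the final collection — forces the $D_{i(u)}^T$ to coincide with a single droplet $D\in\{D_1^T,\dots,D_k^T\}$. Writing $D=\bigcap_{v\in\stabl}(\H_v+a_v)$, the defining inclusion $w_u\in\H_u+a_u$ gives $\<w_u,u\><\<a_u,u\>$, while $w_u\in(\H_u+y)^c$ gives $\<y,u\>\leq\<w_u,u\>$. Chaining these over every $u\in\stabl$ yields $\<y,u\><\<a_u,u\>$ for all such $u$, i.e.\ $y\in D\subset\bigcup_i D_i^T$, closing the induction.

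The main obstacle I anticipate is the subcase $C_u\subset K$: the translated breakthrough block need not coincide with any of the chosen blocks $B_1,\dots,B_{k_0}$, and the only leverage is the combinatorial \emph{maximality} of that collection among disjoint breakthrough blocks of $K$. Once that containment is extracted, everything else is forced: the stopping condition collapses the a priori $|\stabl|$-many droplets $D_{i(u)}^T$ into a single $D$, and the intersection-of-half-planes structure of $\stabl$-droplets converts the per-direction inequalities into the single membership $y\in D$.
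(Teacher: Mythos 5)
Your proof is correct. It rests on the same two pillars as the paper's argument --- the maximality of the disjoint collection $B_1,\dots,B_{k_0}$ and the stopping condition of the covering algorithm --- but it is organised quite differently. The paper sets $L:=D_1^T\cup\dots\cup D_k^T$ and proves the single closure identity $[L\cup K]=L\cup K$ by contradiction: a rule $x+X$ firing outside $L\cup K$ either lies entirely in $K\setminus L$ (contradicting maximality) or meets exactly one final droplet without being contained in it (each $D_i^T$ being closed), so that the part protruding from that droplet again produces a breakthrough block in $K\setminus L$. You instead induct on time and locate each newly infected site $y$ \emph{positively} inside a specific final droplet: one witness $w_u$ per direction $u\in\stabl$, the collapse of the witness droplets $D_{i(u)}^T$ into a single $D$ via the stopping condition, and the chain $\<y,u\>\leq\<w_u,u\><\<a_u,u\>$ placing $y$ in $D$. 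Two features of your route are worth noting. First, by cutting $y+X$ with the half-plane anchored at $y$ itself, your set $C_u=y+\bigl(X\cap(\H_u)^c\bigr)$ is literally a translate of a breakthrough block, so the appeal to maximality is immediate; the paper cuts along the boundary of $D_i^T$ instead, where identifying a breakthrough block requires a further (unstated) observation. Second, you never need the fact that the final droplets are closed under the dynamics, which the paper does use. The cost is a slightly longer chain of reductions, but every step checks out, including the edge cases $k_0=0$ and $y+X\subset K$.
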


\begin{proof}
Let $L:=D_1^T\cup\dots\cup D_k^T$. We claim that $[L\cup K]=L\cup K$. In order to prove this, we must show that there do not exist $X\in\U$ and $x\in\Z^2$ such that $x+X\subset L\cup K$ and $x\notin L\cup K$.

First, it is clear that we cannot have $x+X\subset K\setminus L$. This is simply because $x+X$ contains a breakthrough block, so $x+X\subset K\setminus L$ would contradict the maximality of the initial collection $B_1,\dots,B_{k_0}$ of breakthrough blocks in the covering algorithm.

Second, we cannot have $x+X\subset L\cup K$ and $(x+X)\cap L\neq\emptyset$. To see this, first note that $x+X$ cannot intersect more than one of the $D_i^T$, because if $t=T$ then \eqref{eq:dropletsmerge} does not hold for any $1\leq i<j\leq k$ and $\hat{D}$ contains every update rule. Also, $x+X$ cannot be contained in $D_i^T$ for any $i$, because each $D_i^T$ is closed. Therefore, if $(x+X)\cap L\neq\emptyset$, then there exists a unique $i$ such that $(x+X)\cap D_i^T\neq\emptyset$ and $(x+X)\setminus D_i^T\subset K\setminus L$. Hence also, there exists $u\in\stabl$ and $y\in\Z^2$ such that
\[
\emptyset \neq (x+X)\cap\H_u(y)^c \subset K\setminus L,
\]
again contradicting the maximality of the initial collection of breakthrough blocks.

%Finally, we cannot have $x+X\subset L\cup K$, $(x+X)\cap L\neq\emptyset$ and $(x+X)\cap K\neq\emptyset$. To see this, first note that if $(x+X)\cap L\neq\emptyset$ then there is exactly one value of $i$ such that $(x+X)\cap D_i^T\neq\emptyset$ (this is again because \eqref{eq:dropletsmerge} does not hold). Therefore there exists $u\in\stabl$ and $y\in\Z^2\setminus L$ such that $(x+X)\cap\H_u(y)^c\subset K$, contradicting the maximality of the initial collection of breakthrough blocks in the covering algorithm.

Thus we have $[L\cup K]=L\cup K$ as claimed, and therefore
\[
[K] \subset [L\cup K] = L\cup K,
\]
which proves the lemma.
\end{proof}

We require one further ingredient for the extremal lemma, which is a subadditivity lemma for the diameters of intersecting droplets. We omit the proof of the lemma because our droplets are always either triangles or parallelograms, and in these cases the lemma is a triviality. However, we note for reference that the lemma holds in much greater generality than is stated here: it holds for droplets with respect to an arbitrary (fixed) finite subset of $S^1$. The interested reader may refer to Lemma~23 of version 2 of the present paper on the arXiv for a full proof.\footnote{Version 2 of the present paper, which has the alternative name `Neighbourhood family percolation', may be found at \texttt{http://arxiv.org/abs/1204.3980v2}.}

\begin{lemma}\label{le:subadd}
Let $D_1$ and $D_2$ be $\stabl$-droplets such that $D_1\cap D_2\neq\emptyset$. Then
\begin{equation}
\diam\big( D(D_1 \cup D_2) \big) \leq \diam(D_1) + \diam(D_2). \tag*{\qed}
\end{equation}
\end{lemma}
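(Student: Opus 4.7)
The plan is to pick $p \in D_1 \cap D_2$, translate so that $p = \0$ (which is harmless since $\diam$ is translation-invariant and $D$ is translation-covariant), and then use that $\0 \in D_k$ forces $c_u^{(k)} := h_{D_k}(u) = \max_{x \in D_k} \<x,u\> \geq 0$ for every $u \in \stabl$ and $k \in \{1,2\}$. Consequently $c_u(D) = \max(c_u^{(1)}, c_u^{(2)}) \leq c_u^{(1)} + c_u^{(2)}$. The rest of the proof expresses $\diam(D)$ as the Euclidean length of a vector depending linearly on the $c_u(D)$, or equivalently on the widths $w_u(D) = c_u(D) + c_{-u}(D)$, and combines the above inequality with the triangle inequality.

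In the triangle case $|\stabl| = 3$, each vertex of any triangle $\stabl$-droplet is a linear function of its three support values (solve the two defining linear equations by Cramer's rule), so each side length $L_k = |v_i - v_j|$ is also linear in the $c_u$; a direct calculation shows the coefficients are non-negative when the $c_u$ are non-negative. Moreover all triangle $\stabl$-droplets are similar (same interior angles, fixed by $\stabl$), so the same side $k^*$ is longest for $D, D_1, D_2$. Writing $\diam(D) = L_{k^*}(D) = \sum_i \alpha_i c_{u_i}(D)$ with $\alpha_i \geq 0$,
\[
\diam(D) \leq \sum_i \alpha_i\bigl(c_{u_i}^{(1)} + c_{u_i}^{(2)}\bigr) = \diam(D_1) + \diam(D_2).
\]

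In the parallelogram case $|\stabl| = 4$ with $\stabl = \{\pm u_1, \pm u_2\}$, the diameter equals the longer diagonal, which by a short coordinate calculation equals $|w_1(D) g_1 + w_2(D) g_2|$ for fixed vectors $g_1, g_2$ depending only on $\stabl$; the longer of the two diagonals can be singled out so that $g_1 \cdot g_2 \geq 0$. The width inequality $w_i(D) \leq w_i(D_1) + w_i(D_2)$ follows by applying $c_u(D) \leq c_u^{(1)} + c_u^{(2)}$ to both $u = \pm u_i$. Since $g_1 \cdot g_2 \geq 0$ makes $(a,b) \mapsto |a g_1 + b g_2|$ monotonically non-decreasing in $a, b \geq 0$, the Euclidean triangle inequality yields
\[
\diam(D) \leq \bigl|(w_1(D_1)+w_1(D_2)) g_1 + (w_2(D_1)+w_2(D_2)) g_2\bigr| \leq \diam(D_1) + \diam(D_2).
\]

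The main obstacle is verifying the two geometric sign conditions that drive the argument — non-negativity of the $\alpha_i$ in the triangle case, and the ability to choose $g_1, g_2$ with $g_1 \cdot g_2 \geq 0$ in the parallelogram case. Both reduce to straightforward bookkeeping from the explicit vertex-coordinate formulae, which is why the authors call the lemma a triviality in the triangle/parallelogram setting.
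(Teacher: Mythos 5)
The paper itself omits the proof of this lemma, calling it ``a triviality'' for triangles and parallelograms and deferring the general case to Lemma~23 of version~2 on the arXiv, so there is no in-text argument to compare against. Your approach --- translate a common point to $\0$ so that all support values $c_u^{(k)}$ are non-negative, bound the support values of $D(D_1\cup D_2)$ by $c_u^{(1)}+c_u^{(2)}$, and exploit that the diameter of a fixed-shape triangle (resp.\ parallelogram) is a non-negative linear (resp.\ monotone subadditive) function of its support values --- is the right continuous-geometry argument, and your sign verifications do go through (for the triangle all side lengths are positive multiples of $\sum_i\lambda_ic_{u_i}$ where $\sum_i\lambda_iu_i=0$ with $\lambda_i>0$). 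A slightly cleaner route to the same end, which needs no case analysis and works for arbitrary finite $\T$: the polygon with support values $\max(c_u^{(1)},c_u^{(2)})$ is contained in the Minkowski sum $P_1+(P_2-p)$, and the Euclidean diameter is subadditive under Minkowski sums.

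The genuine gap is your identification of $\diam(D_k)$ with the longest side, or longer diagonal, of the circumscribed polygon $P_k$. The paper defines $\diam$ of a finite $K\subset\Z^2$ as the maximum distance between \emph{lattice points of $K$}; the vertices of $P_k$ need not be lattice points, so the quantity $\sum_i\alpha_ic_{u_i}^{(k)}$ you compute is in general strictly \emph{larger} than $\diam(D_k)$, and your last step uses the identification in exactly the direction that fails. This is not a patchable technicality, because the literal inequality is false: take $\stabl$ to consist of the normalizations of $(1,2)$, $(1,-2)$ and $(-1,0)$ (the origin is interior to their convex hull), and let $D_1=\{(0,0),(0,1)\}$ and $D_2=\{(0,0),(0,-1)\}$; each is an $\stabl$-droplet of diameter $1$ and they meet at the origin, yet $D(D_1\cup D_2)=\{x\in\Z^2:x_1\geq 0,\ x_1+2x_2\leq 2,\ x_1-2x_2\leq 2\}$ contains both $(0,1)$ and $(2,0)$ and so has diameter $\sqrt5>2$. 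What your argument does establish is $\diam\big(D(D_1\cup D_2)\big)\leq\diam(D_1)+\diam(D_2)+O(1)$ with the constant depending only on $\stabl$, since every vertex of the minimal polygon $P_k$ lies within bounded distance of a point of $D_k$ (each bounding line passes through a point of $D_k$, and consecutive lattice points on a fixed rational line are a bounded distance apart). That weaker form is all the paper ever uses --- the application in \eqref{eq:dropletsmerge2} already carries an additive $O(1)$ --- so you should either prove that version or flag explicitly that the discrete and continuous diameters are being conflated.
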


We are now ready to prove the extremal lemma for covered droplets.

\begin{lemma}\label{le:extremal} \emph{(Extremal lemma.)}
Let $\U$ be critical and let $D$ be a covered droplet. Then $D\cap A$ contains at least $\Omega\big(\diam(D)\big)$ disjoint breakthrough blocks.
\end{lemma}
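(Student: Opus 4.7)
The plan is to track the total diameter of the droplet collection throughout the covering algorithm and show that it cannot grow too quickly. Since $D$ is covered, there is a run of the covering algorithm applied to $K := D \cap A$ whose final output is the single droplet $\{D\}$. In particular, the initial step produces a maximal collection of disjoint breakthrough blocks $B_1,\dots,B_{k_0}\subset K\subset D\cap A$, so it suffices to prove that $k_0 \geq \Omega(\diam(D))$.

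To this end, for each $t=0,1,\dots,T$ define
\[
\Sigma_t := \sum_{i=1}^{k_t} \diam(D_i^t),
\]
where $\D^t=\{D_1^t,\dots,D_{k_t}^t\}$ is the collection at step $t$. Since each $D_i^0$ is a copy of the fixed droplet $\hat{D}$, we have $\Sigma_0 = k_0 \cdot \diam(\hat{D})$. The key claim is that $\Sigma_{t+1} \leq \Sigma_t + \diam(\hat{D})$ for every $t<T$, i.e.\ each merge increases the total diameter by at most a bounded amount.

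To prove the claim, suppose step $t+1$ merges $D_i^t$ and $D_j^t$ because both intersect some translate $\hat{D}+x$. Set $D' := D\big(D_i^t \cup (\hat{D}+x)\big)$; since $D_i^t \cap (\hat{D}+x) \neq \emptyset$, Lemma~\ref{le:subadd} gives $\diam(D') \leq \diam(D_i^t) + \diam(\hat{D})$. Similarly, $D'$ contains $\hat{D}+x$ and so intersects $D_j^t$, and a second application of Lemma~\ref{le:subadd} yields
\[
\diam\big(D(D_i^t \cup D_j^t)\big) \leq \diam\big(D(D' \cup D_j^t)\big) \leq \diam(D') + \diam(D_j^t) \leq \diam(D_i^t) + \diam(D_j^t) + \diam(\hat{D}),
\]
using that $D(D_i^t\cup D_j^t)\subset D(D'\cup D_j^t)$ (and hence has no larger diameter). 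Replacing the two summands $\diam(D_i^t)$ and $\diam(D_j^t)$ by this upper bound proves the claim.

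Iterating and using $T=k_0-1$ (the algorithm terminates with a single droplet), we conclude
\[
\diam(D) = \Sigma_T \leq \Sigma_0 + T\cdot \diam(\hat{D}) \leq (2k_0-1)\diam(\hat{D}),
\]
so $k_0 \geq \diam(D)/(2\diam(\hat{D})) = \Omega(\diam(D))$, since $\diam(\hat{D})$ depends only on $\U$. I anticipate that the only subtle point is the bookkeeping for the subadditivity step: the droplets being merged need not themselves intersect, only pass through a common bridging copy of $\hat{D}$, which is why a double application of Lemma~\ref{le:subadd} (mediated by the auxiliary droplet $D'$) is needed to control the diameter of the merged droplet.
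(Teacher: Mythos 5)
Your proof is correct and follows essentially the same route as the paper: track the sum of diameters through the covering algorithm, show each merge increases it by at most $O(1)$ via Lemma~\ref{le:subadd}, and conclude $k_0=\Omega(\diam(D))$. Your double application of subadditivity through the bridging copy of $\hat{D}$ is exactly the (slightly glossed-over) justification of the paper's inequality \eqref{eq:dropletsmerge2}, since the merged droplets need not intersect each other directly.
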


\begin{proof}
Apply the covering algorithm with input $D\cap A$. The algorithm starts with $k_0$ droplets containing disjoint breakthrough blocks from $D\cap A$, and it terminates after $k_0$ steps with a single droplet. At the $(t+1)$th step of the algorithm, we replace droplets $D_i^t$ and $D_j^t$ by $D(D_i^t\cup D_j^t)$, for some $i$ and $j$, where $D_i^t$ and $D_j^t$ are such that \eqref{eq:dropletsmerge} holds. It follows by Lemma~\ref{le:subadd} that
\begin{equation}\label{eq:dropletsmerge2}
\diam\big( D(D_i^t \cup D_j^t) \big) \leq \diam(D_i^t) + \diam(D_j^t) + \diam(\hat{D}).
\end{equation}
Thus, since $\diam(\hat{D})=O(1)$, the quantity
\[
\sum_{D_i^t\in\D} \diam(D_i^t)
\]
increases by at most $O(1)$ at each step of the algorithm, and hence,
\[
\diam(D) \leq k_0 \diam(\hat{D}) + O(k_0).
\]
It follows that $k_0=\Omega\big(\diam(D)\big)$, as required.
\end{proof}

Finally in this subsection, we prove the Aizenman--Lebowitz-type lemma for covered droplets.

\begin{lemma}\label{le:AL} \emph{(Aizenman--Lebowitz-type lemma.)}
Let $D$ be a covered droplet. Then for every $1\leq k\leq \diam(D)$ there exists a covered droplet $D'\subset D$ such that $k\leq\diam(D')\leq 3k$.
\end{lemma}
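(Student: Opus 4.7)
My plan is to mirror the classical Aizenman-Lebowitz strategy: run the covering algorithm and track the evolution of droplet diameters. Since $D$ is covered, I fix an execution of the covering algorithm on $D \cap A$ whose output is $\{D\}$, denoting the successive collections by $\D^0, \D^1, \ldots, \D^T = \{D\}$ and setting $\Delta_t = \max_i \diam(D_i^t)$.

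The key estimate is that $\Delta_t$ grows by at most a factor of two plus a constant per step. Indeed, whenever $D_i^t$ and $D_j^t$ are merged, both intersect some translate $\hat{D}+x$, so iterating Lemma~\ref{le:subadd} (first on $D_i^t$ with $\hat{D}+x$, then on the resulting droplet with $D_j^t$) yields
\[
\diam\big(D(D_i^t \cup D_j^t)\big) \leq \diam(D_i^t) + \diam(\hat{D}) + \diam(D_j^t),
\]
and $\diam(\hat{D}) = O(1)$. Now fix $k$ with $1 \leq k \leq \diam(D)$. For $k$ at most some constant depending only on $\U$, I simply take $D'$ to be any initial droplet $D_i^0$ (a translate of $\hat{D}$), absorbing the constant $3$ into the bounds if necessary for the smallest values. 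Otherwise, let $t^*$ be the first step at which $\Delta_{t^*} \geq k$, and let $D'$ be a droplet in $\D^{t^*}$ of maximum diameter; since $D'$ was formed by merging two droplets of diameter strictly less than $k$, the displayed inequality gives $k \leq \diam(D') < 2k + O(1) \leq 3k$.

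What remains is to show $D'$ is covered, i.e., that some execution of the covering algorithm on $D' \cap A$ outputs $\{D'\}$. By a backward induction along the algorithm, every ancestor of $D'$ lies inside $D'$, so in particular the breakthrough blocks $B_{i_1}, \ldots, B_{i_j}$ whose initial droplets were merged to build $D'$ are disjoint breakthrough blocks in $D' \cap A$. My plan is to argue that this collection is \emph{maximal} in $D' \cap A$, after which the sub-execution producing $D'$ is a valid run of the covering algorithm on $D' \cap A$ with output $\{D'\}$. This maximality step is the main obstacle: a putative additional block $B' \subset D' \cap A$ disjoint from the $B_{i_\ell}$ would, by maximality of the original collection in $D \cap A$, have to overlap some non-ancestor initial block $B_l$, and one must then use the geometry of the merge condition together with $B' \subset D'$ to derive a contradiction, for instance by showing that the initial droplet around $B_l$ would have been merged with an ancestor of $D'$ strictly before step $t^*$, forcing $B_l$ to be an ancestor after all.
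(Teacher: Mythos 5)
Your diameter-tracking argument is exactly the paper's: the paper derives the same inequality $\diam\big(D(D_i^t\cup D_j^t)\big)\leq\diam(D_i^t)+\diam(D_j^t)+O(1)$ by applying Lemma~\ref{le:subadd} twice through $\hat D+x$, observes that the maximum diameter in $\D^t$ at most triples per step (each droplet contains a copy of $\hat D$, so the additive $O(1)$ is absorbed), and stops at the first step at which the maximum reaches $k$. That part of your proposal is correct and complete, and your treatment of $k$ below a constant is no worse than the paper's, which ignores it.

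The gap is the one you flag yourself: coveredness of $D'$. The paper dismisses this with ``it is easy to see that every droplet $D_i^t\in\D^t$ is covered,'' so you have correctly isolated the only delicate point, but the contradiction you sketch for maximality does not go through. The covering algorithm merges an \emph{arbitrarily chosen} pair at each step; the fact that two droplets satisfy \eqref{eq:dropletsmerge} at some stage does not force them to be merged at that stage, nor before step $t^*$. Hence from ``$B'\subset D'\cap A$ meets a non-ancestor block $B_l$ with $B_l\cap D'\neq\emptyset$'' you cannot conclude that $D_l^0$ would already have been absorbed into an ancestor of $D'$: the run may legitimately postpone that merge until after $t^*$, and the ancestor collection genuinely need not be maximal in $D'\cap A$. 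The cleanest repair is to bypass coveredness altogether: in the only place Lemma~\ref{le:AL} is used (Lemma~\ref{le:lbcalc1}), all that is needed about $D'$ is that $D'\cap A$ contains $\Omega(\diam(D'))$ disjoint breakthrough blocks, and this holds directly for your $D'$ --- its ancestor blocks are disjoint, lie in $D'\cap A$, and the computation in the proof of Lemma~\ref{le:extremal} gives $\diam(D')\leq O(|S|)$ where $|S|$ is their number. Alternatively one can extend the ancestor blocks to a maximal disjoint collection in $D'\cap A$ and run the algorithm on those; every added block lies in $D'$, so all initial droplets merge into a single covered droplet $D''\supseteq D'$ with $\diam(D'')\leq\diam(D')+O(1)$, but this certifies $D''$ rather than $D'$ and requires adjusting the constant $3$ and the inclusion in $D$. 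Either way, the step you leave open needs a different argument from the one you propose.
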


\begin{proof}
The proof is similar to (but even simpler than) that of Lemma~\ref{le:extremal}. Apply the covering algorithm with input $D\cap A$. By \eqref{eq:dropletsmerge2}, the quantity
\[
\max \big\{ \diam(D_i^t) \,:\, D_i^t\in\D^t \big\}
\]
at most triples at each step of the algorithm, and moreover, it is easy to see that every droplet $D_i^t\in\D^t$ is covered, which proves the lemma.
\end{proof}

\subsection{Proof of Theorem \ref{th:lower}}\label{se:lowerproof}

We require two calculations for the proof of Theorem \ref{th:lower}. The first says that the probability of there existing a droplet of diameter roughly $\log t$ within distance $O(t)$ of the origin is small. The second says that the probability of there existing a strongly covered droplet containing the origin with diameter at most $\log t$ is also small. These calculations are Lemmas \ref{le:lbcalc1} and \ref{le:lbcalc2} respectively. The proof of Theorem \ref{th:lower} will split naturally into these two cases.

If $k\in\N$ then let $D(k)$ be the minimal droplet containing the origin all of whose sides have $\ell_2$ distance at least $\lambda k$ from the origin, where $\lambda$ is a large constant. Note that we have previously defined $D(K)$ for $K$ a subset of $\Z^2$; we hope this slight abuse of notation does not cause any confusion.

We begin with the first calculation, which says that it is unlikely that there is a covered droplet of diameter roughly $\log t$ contained in $D(t)$.

\begin{lemma}\label{le:lbcalc1}
Let $\U$ be critical, let $\epsilon>0$ be sufficiently small, and let
\[
p \leq \left(\frac{\epsilon}{\log t}\right)^{1/\alpha_1}.
\]
Then the probability that there exists a covered droplet $D\subset D(t)$ such that $\log t\leq \diam(D)\leq 3\log t$ is $o(1)$ as $t\to\infty$.
\end{lemma}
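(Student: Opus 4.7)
The plan is to combine the extremal lemma for covered droplets (Lemma~\ref{le:extremal}) with a union bound over droplet positions and configurations of breakthrough blocks. I would first observe that if $D$ is a covered droplet with $\diam(D)\geq \log t$, then by Lemma~\ref{le:extremal} the random set $D\cap A$ must contain at least $c\log t$ pairwise disjoint breakthrough blocks, for some constant $c=c(\U)>0$. Since every breakthrough block has at least $\alpha_1$ sites, the probability that a given such configuration is entirely contained in $A$ is at most $p^{\alpha_1 c\log t}$.

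Next, I would enumerate. A $\stabl$-droplet is determined by at most $|\stabl|\leq 4$ half-plane offsets, so the number of $\stabl$-droplets $D\subset D(t)$ with $\diam(D)\leq 3\log t$ is at most $O(t^2 (\log t)^{|\stabl|-2})$ (two parameters control the position inside $D(t)$ and the rest are restricted to a window of size $O(\log t)$). For a fixed such droplet $D$, each breakthrough block has diameter $O(1)$ and comes from a bounded list of shapes, so the number of possible positions for a single breakthrough block inside $D$ is at most $K(\log t)^2$ for some constant $K=K(\U)$. Hence the number of unordered choices of $c\log t$ disjoint breakthrough blocks inside $D$ is at most
\[
\binom{K(\log t)^2}{c\log t} \leq \left(\frac{eK(\log t)^2}{c\log t}\right)^{c\log t} = \bigl(C\log t\bigr)^{c\log t}
\]
for some constant $C$.

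Combining with the hypothesis $p^{\alpha_1}\leq \epsilon/\log t$, the probability that a given droplet $D\subset D(t)$ with $\log t\leq \diam(D)\leq 3\log t$ is covered is bounded by
\[
(C\log t)^{c\log t}\cdot \left(\frac{\epsilon}{\log t}\right)^{c\log t} = (C\epsilon)^{c\log t} = t^{c\log(C\epsilon)}.
\]
For $\epsilon$ sufficiently small, $c\log(C\epsilon)$ is an arbitrarily large negative constant, so this probability is at most $t^{-M}$ for any prescribed $M$. Finally, union bounding over the $O(t^2(\log t)^{|\stabl|-2})$ candidate droplets and taking $M>2$ gives a total probability of $o(1)$.

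The main obstacle is the counting step: the naive count of droplets in $D(t)$ is polynomially large and the number of breakthrough-block configurations inside a droplet is $(\log t)^{\Theta(\log t)}$, which is superpolynomial. The proof works only because the extremal lemma forces \emph{linearly many} disjoint blocks (in $\diam(D)$), so the $p^{\alpha_1 c\log t}$ factor contributes $\epsilon^{\Omega(\log t)}$, which beats every polylogarithmic combinatorial factor once $\epsilon$ is taken small enough; the role of $\alpha_1$ in the hypothesis is precisely what calibrates the exponent of $p$ against the extremal count.
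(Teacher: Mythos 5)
Your proof is correct and follows essentially the same route as the paper's: apply the extremal lemma to force $\Omega(\log t)$ disjoint breakthrough blocks, bound the probability for a fixed droplet by $\binom{O((\log t)^2)}{\delta\log t}p^{\alpha_1\delta\log t}\leq (O(\epsilon))^{\delta\log t}\leq t^{-3}$, and union bound over the $t^2(\log t)^{O(1)}$ candidate droplets. Your additional remarks on why the counting works are accurate but the argument itself is the paper's.
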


\begin{proof}
By Lemma~\ref{le:extremal}, there exists a constant $\delta>0$ such that if $D$ is a covered droplet with $\log t\leq \diam(D)\leq 3\log t$ then $D\cap A$ contains at least $\delta \log t$ breakthrough blocks. Since there are $O(1)$ distinct breakthrough blocks and each breakthrough block has size at least $\alpha_1$, the probability that a particular such droplet is covered is at most
\[
\binom{O(\log t)^2}{\delta\log t} p^{\alpha_1 \delta \log t} \leq \big( O(p^{\alpha_1} \log t) \big)^{\delta \log t} \leq \frac{1}{t^3},
\]
because we chose $\epsilon$ sufficiently small. Now, the number of droplets $D\subset D(t)$ with $\log t\leq \diam(D)\leq 3\log t$ is $t^2(\log t)^{O(1)}$. Therefore, the probability that there exists a covered droplet $D\subset D(t)$ with $\log t\leq \diam(D)\leq 3\log t$ is at most
\[
t^2 (\log t)^{O(1)} \cdot \frac{1}{t^3} = o(1),
\]
as required.
\end{proof}

The next calculation says that it is unlikely that there is a covered droplet contained in $D(\log t)$ that itself contains the origin.

\begin{lemma}\label{le:lbcalc2}
Let $\U$ be critical, let $\epsilon>0$ be sufficiently small, and let
\[
p \leq \left(\frac{\epsilon}{\log t}\right)^{1/\alpha_1}.
\]
Then the probability that the origin is contained in a covered droplet $D\subset D(\log t)$ is $o(1)$ as $t\to\infty$.
\end{lemma}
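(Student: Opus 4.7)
The plan is to adapt the argument of Lemma~\ref{le:lbcalc1} by decomposing dyadically over the diameter of the covered droplet, since here the diameter can range between $1$ and $O(\log t)$.

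At each scale $k$, the number of $\stabl$-droplets containing $\0$ with diameter in $[k, 2k)$ is $O(k^{|\stabl|})$, since such a droplet is specified by the $|\stabl|$ offsets of its bounding half-planes, each constrained to an interval of length $O(k)$ by $\0\in D$ and $\diam(D)<2k$. Applying Lemma~\ref{le:extremal} exactly as in the proof of Lemma~\ref{le:lbcalc1}, the probability that a fixed such droplet is covered is at most
\[
\binom{O(k^2)}{\lceil \delta k \rceil} p^{\alpha_1\lceil \delta k \rceil} \leq \bigl(O(p^{\alpha_1} k)\bigr)^{\lceil \delta k \rceil}.
\]
Substituting $p^{\alpha_1}\leq \epsilon/\log t$, the per-scale contribution is bounded by
\[
g(k) := k^{O(1)} \bigl(C\epsilon \cdot k/\log t\bigr)^{\lceil\delta k\rceil}
\]
for some constant $C=C(\U)$.

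Set $k_0 := \lceil 2/\delta\rceil$, so that $\lceil\delta k_0\rceil \geq 2$, and choose $\epsilon > 0$ small (depending on $C,\delta,k_0$). A direct calculation shows that over the relevant range $k \in [k_0,\, O(\log t)]$ the function $g$ attains its maximum at $k=k_0$, where $g(k_0) = O\bigl((\log t)^{-\lceil\delta k_0\rceil}\bigr) = O((\log t)^{-2})$, and decays faster than $(\log t)^{-2}$ at larger $k$ (either because $k \ll \log t$ and the factor $(C\epsilon k/\log t)^{\lceil\delta k\rceil}$ shrinks geometrically in $\lceil\delta k\rceil$, or because $k \sim \log t$ and the factor is $t^{-\Omega(1)}$ provided $\epsilon$ was chosen small enough). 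Summing over the $O(\log\log t)$ dyadic scales $2^j \in [k_0, O(\log t)]$ thus yields $O(\log\log t \cdot (\log t)^{-2}) = o(1)$. For the $O(1)$ small scales with $k < k_0$, there are only $O(1)$ droplets, each covered with probability at most $O(k^2 p^{\alpha_1}) = O(\epsilon/\log t) = o(1)$, so these also contribute $o(1)$.

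The main obstacle is that Lemma~\ref{le:extremal} is essentially vacuous at small scales where $\lceil\delta k\rceil = 1$: it gives nothing beyond the trivial requirement that a covered droplet contain at least one breakthrough block. This is why the small-$k$ and dyadic regimes must be handled separately, using the smallness of $p^{\alpha_1}$ itself to absorb the small-$k$ contribution and the super-polynomial decay of $g$ to control the dyadic tail.
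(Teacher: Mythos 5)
Your proposal is correct and follows essentially the same route as the paper: apply the extremal lemma to bound the probability that a fixed droplet is covered, count the droplets containing the origin at each scale, and sum over diameters up to $O(\log t)$. The only difference is bookkeeping — you organize the sum dyadically and split off the constant scales where $\lceil\delta k\rceil\leq 1$, whereas the paper sums over unit increments of the diameter and splits the sum at $k=(\log\log t)^2$ — and both handle the tail correctly.
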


\begin{proof}
We begin as in the previous lemma: by Lemma~\ref{le:extremal} there exists $\delta>0$ such that if $D$ is a covered droplet with diameter at least $k$ then $D\cap A$ contains at least $\delta k$ disjoint breakthrough blocks. Thus the probability that a droplet with diameter between $k$ and $k+1$ is covered is at most
\[
\binom{O(k^2)}{\delta k} p^{\alpha_1 \delta k} \leq \big( O(p^{\alpha_1}k) \big)^{\delta k} = \left(\frac{O(\epsilon k)}{\log t}\right)^{\delta k}.
\]
The number of droplets of diameter between $k$ and $k+1$ that contain the origin is at most $k^{O(1)}$. Hence, the probability that there exists a covered droplet $D\subset D(\log t)$ such that $\0\in D$ is at most
\[
\sum_{k=1}^{O(\log t)} k^{O(1)} \left(\frac{O(\epsilon k)}{\log t}\right)^{\delta k}.
\]
By breaking up the sum at $k=(\log\log t)^2$, this in turn is at most
\[
(\log\log t)^{O(1)} \left(\frac{(\log\log t)^2}{\log t}\right)^\delta + O(\log t)^{O(1)} O(\epsilon)^{\delta (\log\log t)^2} = o(1),
\]
which completes the proof of the lemma.
\end{proof}

We are now ready to prove Theorem \ref{th:lower}.

\begin{proof}[Proof of Theorem \ref{th:lower}]
Let $p$ be such that
\[
p \leq \left(\frac{\epsilon}{\log t}\right)^{1/\alpha_1},
\]
where $\epsilon>0$ is sufficiently small. We shall show that $\tau\geq t$ with high probability as $t\to\infty$.

First we observe that if $\0\in A_t$ then we must have $\0\in[D(t)\cap A]$, provided the constant $\lambda$ in the definition of $D(t)$ is sufficiently large. Therefore, by Lemma~\ref{le:coverclosed}, either $\0\in A$ or $\0\in D$, where $D\subset D(t)$ is a covered droplet. Since the probability of the former event is $p=o(1)$, we need only bound the probability of the latter.

We split into two cases according to the size and position of $D$. Suppose first that $D\subset D(\log t)$. Then the probability that $D$ is covered is $o(1)$ by Lemma~\ref{le:lbcalc2}. On the other hand, if $D\subset D(t)$ but $D\not\subset D(\log t)$, then by Lemma~\ref{le:AL} there exists a covered droplet $D'\subset D$ such that $\log t\leq \diam(D')\leq 3\log t$. But the probability of this event is also $o(1)$, by Lemma~\ref{le:lbcalc1}, and this completes the proof of the theorem.
\end{proof}

\section{Upper bounds: the growth of droplets}\label{se:upperprelims}

The aim of this section is to develop a deterministic framework for growth in $\U$-bootstrap percolation. In the subsequent two sections, we shall use this framework to bound from below the growth that occurs under the action of a critical (in Section~\ref{se:upper}) or supercritical (in Section~\ref{se:super}) update family. Our aim in constructing this framework can be divided into three sub-aims. The first is to establish a means by which an entire half-plane $\H_u$ can grow by a single row; this we do by introducing the notion of `$u$-blocks' in Subsection~\ref{se:newline}. The second is to transfer from growth of half-planes by a single row to growth of droplets by a single row; this we do by introducing the notion of `quasi-stability' in Subsection~\ref{se:quasi}. The third and final sub-aim is to define sequences of nested droplets and give sufficient conditions for growth to occur between successive droplets; this we do in Subsection~\ref{se:growth}.

\subsection{The infection of new lines}\label{se:newline}

Our first task, then, is to show that there exists a finite set $Z\subset\Z^2$ such that $\ell_u\subset[\H_u\cup Z]$, provided $u\in S^1$ satisfies certain natural conditions. We shall in fact show that one can take $Z\subset\ell_u$.

Given a stable direction $u$, we define $\ell_u^l$ to be the set of sites in $\ell_u$ that lie to the left of the origin as one looks in the direction of $u$, and $\ell_u^r$ to be the set of sites in $\ell_u$ that lie to the right of the origin as one looks in the direction of $u$.

\begin{definition}
Let $u\in S^1$. A finite set $Z\subset\Z^2$ is a \emph{$u$-left-block} (respectively, a \emph{$u$-right-block}) if there exists $x\in\Z^2$ such that $x+Z$ is a subset of $\ell_u$ and consists of consecutive sites in $\ell_u$, and if $\ell_u^l\subset\big[\H_u\cup(x+Z)\big]$ (respectively, $\ell_u^r\subset\big[\H_u\cup(x+Z)\big]$). It is a \emph{$u$-block} if it is both a $u$-left-block and a $u$-right-block.
\end{definition}

These special breakthrough blocks are so-called because we think of them as being the means of growing left or right along the edge of a half-plane or along a side of a droplet. Note that the empty set is a $u$-block (and also a $u$-left-block and a $u$-right-block) if and only if $u$ is unstable.

A stable direction $u(\theta)$ is said to be \emph{left-isolated} if there exists $\delta>0$ such that for all $\phi\in(\theta,\theta+\delta)$, the direction $u(\phi)$ is unstable. Similarly, $u(\theta)$ is said to be \emph{right-isolated} if there exists $\delta>0$ such that for all $\phi\in(\theta-\delta,\theta)$, the direction $u(\phi)$ is unstable. We say that $u$ is \emph{isolated} if it is both left-isolated and right-isolated.

The following lemma should be thought of as saying that there exist $u$-blocks for every $u\in\stab$ that will be relevant to us.

\begin{lemma}\label{le:ublock}
There exists a $u$-left-block for every left-isolated stable direction $u$, a $u$-right-block for every right-isolated stable direction $u$, and a $u$-block for every isolated stable direction $u$.
\end{lemma}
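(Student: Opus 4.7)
The plan is to construct a $u$-left-block for each left-isolated stable direction $u$ in detail; the $u$-right-block assertion will then follow by a symmetric argument, and the $u$-block assertion by combining the two constructions. A key preliminary observation is that every left-isolated stable direction $u$ must be rational: by Theorem~\ref{th:stabclass} combined with \eqref{eq:Sc}, $\stab$ is a finite union of closed intervals with rational endpoints, and a left-isolated $u$ is forced to be a counterclockwise endpoint of one of these intervals. Hence $\ell_u\cap\Z^2$ is a non-trivial one-dimensional sublattice of $\Z^2$; let $v_u$ denote its smallest non-zero element lying to the right of $\0$ as one looks in the direction of $u$, so that $\ell_u^r=\{kv_u:k\geq 1\}$ and $\ell_u^l=\{kv_u:k\leq -1\}$.

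Next I would extract the update rule that drives the propagation. Because $u$ is left-isolated, there exist $X\in\U$ and $\phi\in(\theta(u),\theta(u)+\delta)$, arbitrarily close to $\theta(u)$, with $X\subset\H_{u(\phi)}$. Letting $\phi\to\theta(u)^+$ yields $X\subset\overline{\H_u}=\H_u\cup\ell_u$, and the stability of $u$ forces $X\cap\ell_u\neq\emptyset$. A first-order computation of $\<x,u(\phi)\>$ at $\phi=\theta(u)$, together with the hypothesis $\0\notin X$, then shows that every $x\in X\cap\ell_u$ lies strictly to the right of $\0$, so that $X\cap\ell_u\subset\ell_u^r$. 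Writing $X\cap\ell_u=\{k_1 v_u,\dots,k_m v_u\}$ with $1\leq k_1<\dots<k_m$, I set $d:=k_m$ and take $Z:=\{0,v_u,\dots,(d-1)v_u\}$.

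The heart of the proof is then an induction on $j\geq 0$ showing $\{-jv_u,-(j-1)v_u,\dots,(d-1)v_u\}\subset[\H_u\cup Z]$. The essential observation is that $\H_u+w=\H_u$ for every $w\in\ell_u$ (since $\<w,u\>=0$), so that when the rule $X$ is applied at the site $-(j+1)v_u$, every site of $X$ lying in $\H_u$ translates into an already-infected site of $\H_u$; the remaining sites translate to $(k_i-j-1)v_u$ for $i=1,\dots,m$, and the bounds $1\leq k_i\leq d$ combined with the inductive hypothesis place these inside the already-infected block $\{-jv_u,\dots,(d-1)v_u\}$. Iterating infects every $-jv_u$ with $j\geq 1$, and hence $\ell_u^l\subset[\H_u\cup Z]$ as required. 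The right-isolated case is handled identically, extracting a rule $X$ from an unstable direction just clockwise of $u$ (so that $X\cap\ell_u\subset\ell_u^l$) and propagating to the right; the isolated case follows by running both arguments with a single block $Z$ on $\ell_u$ long enough to support both inductions.

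The main subtle point -- the step I expect to require most care -- is the verification that $X\cap\ell_u$ lies strictly on the appropriate side of $\0$ rather than merely inside $\overline{\H_u}$. This strict localization is what guarantees that the propagation actually advances along $\ell_u$ in the intended direction. Once it is pinned down, the inductive propagation itself is essentially mechanical.
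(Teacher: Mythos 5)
Your proof is correct and follows essentially the same route as the paper's: extract a single rule $X$ destabilizing an interval of directions just counterclockwise of $u$, deduce $X\subset\H_u\cup\ell_u$ with $X\cap\ell_u$ lying strictly on one side of the origin, and propagate along $\ell_u$ from a block of consecutive sites. The paper's version is a three-line argument that leaves the first-order localization of $X\cap\ell_u$ and the propagation induction implicit; you have simply made these steps explicit, together with the pigeonhole (using the finiteness of $\U$) needed to fix a single rule $X$ valid for all $\phi$ near $\theta(u)$.
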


\begin{proof}
Let $u=u(\theta)$ be a left-isolated stable direction. Then there exists an update rule $X$ and $\delta>0$ such that $u(\phi)$ is destabilized by $X$ for every $\phi$ in the open interval $I = (\theta,\theta+\delta)$. Thus
\[
X \subset \bigcap_{\phi\in I} \H_{u(\phi)} \subset \H_u \cup \ell_u^l,
\]
so $X\cap(\H_u)^c$ is a $u$-left-block. If $u$ is right-isolated then a $u$-right-block exists by symmetry. If $u$ is isolated, then it is left- and right-isolated, so the result follows.
\end{proof}

%Given a breakthrough block $B$, let $\|B\|$ be the \emph{length} of $B$, defined by
%\[
%\|B\| := \max \, \{\|x\| \, : \, x\in B\}.
%\]
%For each stable direction $u$, let $\mathcal{B}_u^l$ be the set of all $u$-left-blocks. Let $B_u^l=X_u^l\cap\H_u$ be a $u$-left-block with minimal length, so if $B\in\mathcal{B}_u^l$ then $\|B_u^l\| \leq \|B\|$. We call $B_u^r$ \emph{the $u$-left-block} and $X_u^l$ \emph{the $u$-left-rule}. Similarly, let $\mathcal{B}_u^r$ be the set of all $u$-right-blocks and let $B_u^r=X_u^r\cap\H_u$ be a $u$-right-block with minimal length, so if $B\in\mathcal{B}_u^r$ then $\|B_u^r\| \leq \|B\|$. We call $B_u^r$ \emph{the $u$-right-block} and $X_u^r$ \emph{the $u$-right-rule}.

\subsection{Quasi-stability}\label{se:quasi}

Our second task of this section is to transfer the results of the previous subsection from half-planes to droplets: that is, to establish conditions under which a $u$-block can be used to extend a droplet by a single row along its $u$-side. The important property that we need is that the infection grows \emph{all the way to the corners}, not just to within a constant distance.

In order to illustrate why this is not straightforward, let us give an example. Suppose $X_1$ and $X_2$ are two update rules such that $\Theta(X_i) = (\phi_i,\psi_i)$ for $i=1,2$, and $\phi_2 < \phi_1 < \psi_2 < \psi_1$ (see Figure~\ref{fi:quasi}). Thus, there are no stable directions between $u(\phi_2)$ and $u(\psi_1)$. Suppose (although it is not necessary), that in fact $u(\phi_2)$ and $u(\psi_1)$ are stable directions, so $\stab$-droplets have consecutive sides with outward normals in the directions of $u(\phi_2)$ and $u(\psi_1)$. Now suppose we are growing leftwards along the $u(\phi_2)$-side, and we are nearly at the corner of the droplet. How are we going to reach the corner? $X_1$ does not fit inside the droplet near the corner because $\phi_1>\phi_2$, so it partly lies outside the $u(\phi_2)$-side. $X_2$ does not fit inside the droplet near the corner either, because $\psi_2<\psi_1$, so it partly lies outside the $u(\psi_1)$-side (again, see Figure~\ref{fi:quasi}). So there is no rule that enables us to reach the corner, and we are stuck. Our solution is to `pretend' that $u(\phi_1)$ and $u(\psi_2)$ are stable directions (we call them `quasi-stable directions'). More precisely, we ensure that the set $\T\subset S^1$ with respect to which our droplets are defined includes the directions $u(\phi_1)$ and $u(\psi_2)$. If we do this, and `grow' the $u(\phi_1)$- and $u(\psi_2)$-sides of our $\T$-droplets just as the sides with respect to stable directions, then we find that we are indeed able to grow to the corners.

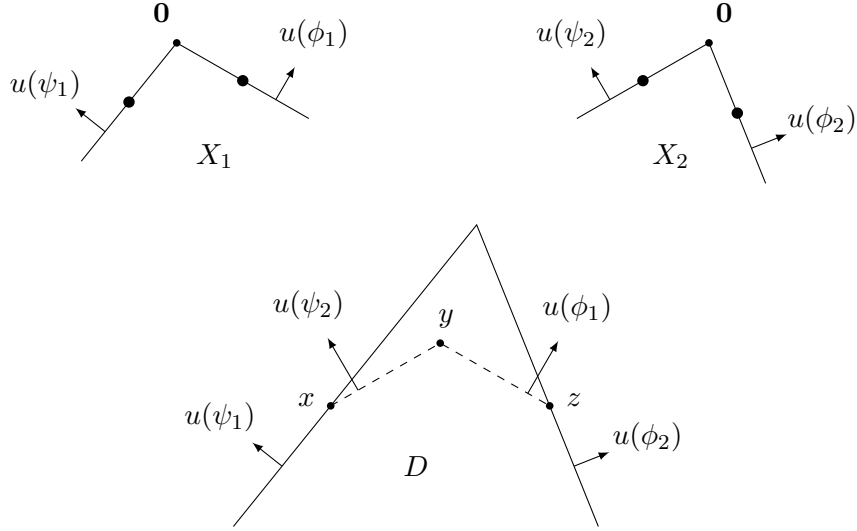
\begin{figure}[ht]
  \centering
  \begin{tikzpicture}[>=latex]
    \node [circle,fill,inner sep=0pt,minimum size=0.1cm] at (0,0) {};
    \node [circle,fill,inner sep=0pt,minimum size=0.1cm] at (7,0) {};
    \node [circle,fill,inner sep=0pt,minimum size=0.15cm] at ($(0,0)!1cm!(-1,-1.25)$) {};
    \node [circle,fill,inner sep=0pt,minimum size=0.15cm] at ($(0,0)!1cm!(-30:1)$) {};
    \path (7,0) ++($(0,0)!1cm!(-150:1)$) node [circle,fill,inner sep=0pt,minimum size=0.15cm] {};
    \node [circle,fill,inner sep=0pt,minimum size=0.15cm] at ($(7,0)!1cm!(7.5,-1.25)$) {};
    \draw (0,0) -- ($(0,0)!2cm!(-1,-1.25)$);
    \draw (0,0) -- ($(0,0)!2cm!(-30:1)$);
    \draw (7,0) -- ++($(0,0)!2cm!(-150:1)$);
    \draw (7,0) -- ++($(0,0)!2cm!(0.5,-1.25)$);
    \draw [->] ($(0,0)!1.5cm!(-1,-1.25)$) -- ++($(0,0)!0.5cm!(-1.25,1)$);
    \draw [->] ($(0,0)!1.5cm!(-30:1)$) -- ++($(0,0)!0.5cm!(60:1)$);
    \draw [->] (7,0) ++($(0,0)!1.5cm!(-150:4)$) -- ++($(0,0)!0.5cm!(120:1)$);
    \draw [->] (7,0) ++($(0,0)!1.5cm!(0.5,-1.25)$) -- ++($(0,0)!0.5cm!(1.25,0.5)$);
    \path ($(0,0)!1.5cm!(-1,-1.25)$) ++($(0,0)!1cm!(-1.25,1)$) node {$u(\psi_1)$};
    \path ($(0,0)!1.5cm!(-30:1)$) ++($(0,0)!1cm!(60:1)$) node {$u(\phi_1)$};
    \path (7,0) ++($(0,0)!1.5cm!(-150:4)$) ++($(0,0)!1cm!(120:1)$) node {$u(\psi_2)$};
    \path (7,0) ++($(0,0)!1.5cm!(0.5,-1.25)$) ++($(0,0)!1cm!(1.25,0.5)$) node {$u(\phi_2)$};
    \node at (-0.2,0.4) {$\0$};
    \node at (7.2,0.4) {$\0$};
    \node at (0.5,-1.5) {$X_1$};
    \node at (6.5,-1.5) {$X_2$};
  \end{tikzpicture} \\
  \vspace{0.5cm}
  \begin{tikzpicture}[scale=0.8,>=latex]
    \path [name path=P1] (1.6,2) -- ++(30:3);
    \path [name path=P2] (5.2,2) -- ++(150:3);
    \path [name intersections={of=P1 and P2,by=P}];
    \pgfresetboundingbox
    \draw (0,0) -- (4,5) -- (6,0);
%    \path (0,5) -- (1,5);
    \draw [dashed] (1.6,2) -- (P) -- (5.2,2);
    \draw [->] (0.8,1) -- ++($(0,0)!0.625cm!(-1,0.8)$);
    \draw [->] (5.6,1) -- ++($(0,0)!0.625cm!(1,0.4)$);
    \draw [->] ($(1.6,2)!0.25!(P)$) -- ++($(0,0)!1cm!(120:1)$);
    \draw [->] ($(5.2,2)!0.2!(P)$) -- ++($(0,0)!1cm!(60:1)$);
    \path (0.8,1) ++($(0,0)!1.3cm!(-1,0.8)$) node {$u(\psi_1)$};
    \path (5.6,1) ++($(0,0)!1.3cm!(1,0.4)$) node {$u(\phi_2)$};
    \path ($(1.6,2)!0.25!(P)$) ++($(0,0)!1.625cm!(120:1)$) node {$u(\psi_2)$};
    \path ($(5.2,2)!0.2!(P)$) ++($(0,0)!1.625cm!(60:1)$) node {$u(\phi_1)$};
    \node at (3,1) {$D$};
    \node [circle,fill,inner sep=0pt,minimum size=0.1cm] at (1.6,2) {};
    \node [circle,fill,inner sep=0pt,minimum size=0.1cm] at (P) {};
    \node [circle,fill,inner sep=0pt,minimum size=0.1cm] at (5.2,2) {};
    \node at (1.2,2.1) {$x$};
    \path (P) ++(0.1,0.4) node {$y$};
    \node at (5.6,2.1) {$z$};
  \end{tikzpicture}
\caption{An illustration of the need for quasi-stable directions. The top part of the figure shows two rules, $X_1$ and $X_2$, that destabilize overlapping intervals in $S^1$. Each of the rules consists of just two sites. In the bottom part of the figure, the two long solid lines show one corner of an $\stab$-droplet $D$. Neither $X_1$ nor $X_2$ can be used to grow to the corner. The two dashed lines show how the droplet might look if $u(\phi_1)$ and $u(\psi_2)$ were added to the stable set. Now $X_1$ can be used to grow towards $x$ and $y$ from both directions, and $X_2$ can be used to grow towards $z$ (and $y$) from both directions.}
\label{fi:quasi}
\end{figure}

\begin{lemma}\label{le:quasi}
Let $\U$ be an arbitrary update family. Then there exists a finite set $\quasi\subset S^1$ such that if $u$ and $v$ are consecutive elements of $\stab\cup\quasi$ then there exists a rule $X\in\U$ such that
\begin{equation}\label{eq:quasi}
X \subset \big(\H_u\cup\ell_u\big) \cap \big(\H_v\cup\ell_v\big).
\end{equation}
\end{lemma}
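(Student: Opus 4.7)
My plan is to build $\quasi$ by working inside each connected component of $\stab^c$ separately. Theorem~\ref{th:stabclass} guarantees that $\stab^c$ is a finite disjoint union of open arcs, and the lemma is vacuous across the interior of any stable arc (no two distinct points of such an arc are consecutive in any $\T \supset \stab$), so all the work happens in the unstable arcs. The corner cases $\stab = S^1$ (take $\quasi = \emptyset$) and $\stab = \emptyset$ (treat $S^1$ as one cyclic arc) cause no additional trouble.

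Fix a component $u(\alpha, \beta)$ of $\stab^c$. By identity \eqref{eq:Sc} and the finiteness of $\U$, the finite family of open intervals $\{T(X) = u(\phi_X, \psi_X) : X \in \U\}$ covers $(\alpha, \beta)$. I would extract an inclusion-minimal subfamily $X_1, \dots, X_k$ that still covers, indexed so that $\phi_1 < \cdots < \phi_k$. Minimality then forces the $\psi_i$'s to be strictly increasing as well (otherwise one $T(X_j)$ contains another), and the covering property gives $\phi_1 \leq \alpha < \psi_1$, $\phi_k < \beta \leq \psi_k$, and the chain condition $\phi_{i+1} \leq \psi_i$ for $1 \leq i < k$. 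I then set $\gamma_0 := \alpha$, $\gamma_k := \beta$, and $\gamma_i := \phi_{i+1}$ for $1 \leq i \leq k-1$, and add $u(\gamma_1), \dots, u(\gamma_{k-1})$ to $\quasi$. The $\gamma$'s strictly increase in $[\alpha, \beta]$ and enumerate the elements of $\stab \cup \quasi$ there, so the consecutive pairs of $\stab \cup \quasi$ spanning this component are exactly $(u(\gamma_{i-1}), u(\gamma_i))$ for $i = 1, \ldots, k$. For each such pair the rule $X_i$ is the desired witness: since $\gamma_{i-1}, \gamma_i \in [\phi_i, \psi_i]$ and $X_i \subset \H_{u(\theta)}$ for every $\theta \in (\phi_i, \psi_i)$, continuity of $\theta \mapsto \<x, u(\theta)\>$ at the two endpoints upgrades the inclusion to $X_i \subset (\H_{u(\gamma_{i-1})} \cup \ell_{u(\gamma_{i-1})}) \cap (\H_{u(\gamma_i)} \cup \ell_{u(\gamma_i)})$, which is \eqref{eq:quasi}.

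Conceptually the essential new idea is to add quasi-stable directions at all: between two consecutive stable directions $u, v$ there need not be any single rule inside $(\H_u \cup \ell_u) \cap (\H_v \cup \ell_v)$, because, as in Figure~\ref{fi:quasi}, the destabilizing rules can have overlapping but staggered $T(X)$'s; one cuts each unstable arc with quasi-stable directions so that each sub-arc lies inside a single $T(X_i)$. The one step that requires a little care is verifying the chain condition $\phi_{i+1} \leq \psi_i$: if it failed, then any $\theta \in (\psi_i, \phi_{i+1}) \subset (\alpha, \beta)$ would be uncovered by $\{T(X_j)\}$, because the orderings force $\psi_j \leq \psi_i < \theta$ for $j \leq i$ and $\phi_j \geq \phi_{i+1} > \theta$ for $j > i$, contradicting that the subfamily covers $(\alpha, \beta)$. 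Parallel short arguments pin down the boundary inequalities $\phi_1 \leq \alpha < \psi_1$ and $\phi_k < \beta \leq \psi_k$ (in particular the strict parts, which are what make the $\gamma$'s strictly increasing). This is one of the places where the absence of symmetry really matters: there is no reflection swapping the two ends of the arc, so each boundary has to be treated on its own.
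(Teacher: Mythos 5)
Your proof is correct, but it takes a genuinely different route from the paper's. The paper defines $\quasi$ maximally, as the set of \emph{all} directions perpendicular to \emph{any} site of \emph{any} rule (equation \eqref{eq:quasiset}), and then verifies \eqref{eq:quasi} by contradiction: given a consecutive pair $u,v$ for which every rule fails, it picks an unstable $w$ strictly between them, takes $X\subset\H_w$, and notes that $\<x,\cdot\>$ must change sign on the arc between $w$ and $v$ (or $u$) for some $x\in X$, yielding a perpendicular direction --- hence an element of $\quasi$ --- strictly between $u$ and $v$. Your argument is instead constructive and more economical: you decompose $\stab^c$ into its finitely many components (for which Lemma~\ref{le:Tclass} and \eqref{eq:Sc} suffice, so there is no circularity), extract a minimal chain subcover by the intervals $T(X)$, and insert quasi-stable directions exactly at the interior left endpoints of the chain. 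Since the endpoints of $T(X)=\bigcap_{x\in X}T(x)$ are perpendiculars to sites of $X$, your $\quasi$ is in fact a subset of the paper's, which is harmless because the rest of the paper uses only the conclusion of Lemma~\ref{le:quasi}. What your approach buys is a smaller, more transparent $\quasi$; what it costs is the bookkeeping for minimal interval covers (monotonicity of both endpoint sequences, the overlap condition $\phi_{i+1}\leq\psi_i$, the boundary inequalities), which you handle correctly, plus the case $\stab=\emptyset$, where the chain must wrap cyclically around $S^1$; you flag this but leave it to the reader, and it deserves a sentence (each $T(X)$ has length at most $\pi$, so the minimal cyclic cover has at least three arcs and consecutiveness is well defined). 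The paper's maximal choice of $\quasi$ sidesteps all of this case analysis.
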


\begin{proof}
As suggested above, our construction is to add to $\stab$ every unit vector $u$ perpendicular to $x$, for every site $x\in X$ and every update rule $X\in\U$. Thus, we define
\begin{equation}\label{eq:quasiset}
\quasi := \bigcup_{X\in\U} \bigcup_{x\in X} \big\{ u\in S^1 \,:\, \< x,u \> = 0 \big\}.
\end{equation}
Now let $u$ and $v$ be consecutive elements of $\stab\cup\quasi$, and suppose that \eqref{eq:quasi} does not hold. This means that for every $X\in\U$ we have
\begin{equation}\label{eq:quasifalse}
X \not\subset \big(\H_u\cup\ell_u\big) \cap \big(\H_v\cup\ell_v\big).
\end{equation}
Choose an arbitrary unit vector $w\in S^1$ such that $\theta(u)<\theta(w)<\theta(v)$. Since $w$ is unstable, there exists an update rule $X$ such that $X\subset \H_w$. Now, $X$ satisfies \eqref{eq:quasifalse}, so without loss of generality there exists $x\in X$ such that $x\in\H_u$ and $x\notin(\H_v\cup\ell_v)$. But this implies that there is a unit vector $w'$ such that $\< x,w'\>=0$ and $\theta(u)<\theta(w')<\theta(v)$, contradicting the construction of $\quasi$.
\end{proof}

\begin{figure}[ht]
  \centering
  \begin{tikzpicture}[>=latex]
    \draw (170:1) -- (-10:5);
    \draw (60:1) -- (-120:5);
    \draw [dashed] (-170:3) -- (10:3);
    \draw [->] (-170:1.5) -- ++(100:0.5);
    \path (-170:1.5) ++(100:0.8) node {$w$};
    \draw [->] (-10:4) -- ++(80:0.5);
    \path (-10:4) ++(80:0.8) node {$u$};
    \draw [->] (-120:4) -- ++(150:0.5);
    \path (-120:4) ++(150:0.8) node {$v$};
    \node at (-10:5.5) {$\ell_u$};
    \node at (-120:5.5) {$\ell_v$};
    \node at (2,-3) {$(\H_u\cup\ell_u)\cap(\H_v\cup\ell_v)$};
    \node [circle,fill,inner sep=0,minimum size=0.08cm] at (0,0) {};
    \node [circle,fill,inner sep=0,minimum size=0.08cm] at (-2,-1.5) {};
    \draw [dashed] (0,0) -- (-2,-1.5);
    \draw [->] (-2,-1.5) -- ($(-2,-1.5)!0.5cm!(-3.5,0.5)$);
    \node at (-2,-1.9) {$x$};
    \node at (-2.4,-0.8) {$w'$};
    \node at (-0.2,0.3) {$\0$};
  \end{tikzpicture}
  \caption{Lemma~\ref{le:quasi}}
  \label{fi:quasi2}
\end{figure}

Henceforth we fix a set $\quasi$ satisfying the conclusion of Lemma~\ref{le:quasi} (for example, the set defined in \eqref{eq:quasiset}), and we call this set the \emph{quasi-stable set} and its elements \emph{quasi-stable directions}. The next lemma is a little technical to state, but it should be thought of as saying that, provided we use the quasi-stable directions, for certain directions $v$, a $\T$-droplet can grow by a single row in the direction of $v$ (all the way to the corners) with the help of a $v$-block.

\begin{lemma}\label{le:dropletnewside}
Let $\T\subset S^1$ be finite and let
\[
D := \bigcap_{u\in\T} \big(\H_u+a_u\big)
\]
be a $\T$-droplet for some set $\{a_u\in\Z^2:u\in\T\}$ such that every side of $D$ is sufficiently long. Suppose $w_1$, $v$ and $w_2$ are elements of $\T$ and are consecutive in $\T\cup\stab\cup\quasi$. Let
\[
D' := \big\{ x\in\Z^2 : \< x-a_v,v \> \leq 0 \big\} \cap \bigcap_{u\in\T\setminus\{v\}} \big(\H_u+a_u\big)
\]
be the $\T$-droplet formed from $D$ by the addition of a single new row along its $v$-side. Finally, suppose $Z$ is a $u$-block such that $Z$ is contained in the line segment $D'\setminus D$ and is at least a sufficiently large constant distance from either end of the line segment. Then
\[
D' \subset [D\cup Z].
\]
\end{lemma}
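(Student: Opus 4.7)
The plan is to translate so that $a_v = \mathbf{0}$, after which the new row $L := D' \setminus D$ becomes a contiguous segment of $\ell_v$ containing $Z$. Since $D \subset [D \cup Z]$ trivially, it suffices to prove $L \subset [D \cup Z]$. I would split the argument into two phases: (i) starting from $Z$, grow along $L$ toward both endpoints, up to within distance $O(1)$ of each corner of $D'$; (ii) complete the infection in the $O(1)$-neighbourhoods of the two corners using a rule supplied by Lemma~\ref{le:quasi}.

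For phase (i), I would exploit the defining property of a $v$-block: abstractly $\ell_v \subset [\H_v \cup Z]$, so there is an ordering of the sites of $\ell_v$ and, for each such site $s$, a rule $X_s \in \U$ with $s + X_s \subset \H_v \cup \ell_v$ and the portion of $s + X_s$ lying on $\ell_v$ contained in $Z$ together with the sites listed before $s$. (The stability of $v$ forbids any infection above $\ell_v$, which justifies $X_s \subset \H_v \cup \ell_v$.) To transfer this into the concrete setting, I would observe that the translate $s + X_s$ lies in $D \cup L$ as long as the bounded support of $X_s$ does not stick out past the $w_1$- or $w_2$-side of $D$, and use monotonicity to arrange the order of growth so that moreover the $\ell_v$-portion of $s + X_s$ always sits inside $L$ (say, by growing outward from $Z$ one site at a time). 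Because every side of $D$ is assumed sufficiently long and $Z$ is sufficiently far from each endpoint of $L$, this succeeds for every $s \in L$ except those within distance $O(1)$ of a corner.

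For phase (ii), let $u_1 \in \stab \cup \quasi$ be such that $u_1$ and $v$ are consecutive in $\stab \cup \quasi$ and $u_1$ lies on the $w_1$-side of $v$. The hypothesis that $w_1, v, w_2$ are consecutive in $\T \cup \stab \cup \quasi$ forces $\theta(u_1) \leq \theta(w_1) < \theta(v)$. Lemma~\ref{le:quasi} then supplies a rule $X^* \in \U$ with $X^* \subset (\H_{u_1} \cup \ell_{u_1}) \cap (\H_v \cup \ell_v)$, and a short wedge-comparison argument gives the inclusion $(\H_{u_1} \cup \ell_{u_1}) \cap (\H_v \cup \ell_v) \subset (\H_{w_1} \cup \ell_{w_1}) \cap (\H_v \cup \ell_v)$. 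Consequently the translate of $X^*$ placed at any site of $\ell_v$ within the surviving $O(1)$-neighbourhood of the $w_1$-corner of $L$ fits inside $D \cup L$, and iterating $X^*$ toward the corner fills in the missing sites; the $w_2$-corner is handled symmetrically. The main obstacle I anticipate is precisely this corner step, because the direction $w_1$ need not correspond to any rule in $\U$ at all --- the whole point of quasi-stability is to provide a corner rule from $\stab \cup \quasi$ which, thanks to the wedge inclusion above, is automatically fit for the tighter corner determined by $w_1$.
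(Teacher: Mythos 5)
Your proposal is correct and follows essentially the same route as the paper: use the defining property of the $v$-block to fill the new row except for $O(1)$ sites at each end, then finish each corner by repeatedly applying the rule that Lemma~\ref{le:quasi} associates to the pair of directions consecutive in $\stab\cup\quasi$ straddling that corner, observing that its trace on $\ell_v$ points back towards $Z$ so the corner can be filled from the inside out (the paper phrases this as a minimal-counterexample argument, which is the same induction). Your explicit wedge comparison reducing from the consecutive direction $u_1\in\stab\cup\quasi$ to the actual side direction $w_1\in\T$ is a small extra step that the paper elides by treating $v$ and $w_2$ as consecutive in $\stab\cup\quasi$ directly, but it does not change the argument.
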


\begin{proof}
By the definition of a $u$-block, the set $[D\cup Z]$ contains all of $D'$ except for possibly at most a constant number of sites at either end of the line segment $D'\setminus D$. Suppose the conclusion of the lemma is false, so $D'\not\subset[D\cup Z]$. Without loss of generality, there is a site $z\in (D'\setminus D) \setminus [D\cup Z]$ to the left of $Z$ (as one looks in the direction of $v$), and we may assume that $z$ is in fact the first such site to the left of $Z$. Since $z\in D'\setminus D$, we have
\[
\< z-a_v,v \> = 0 \qquad \text{and} \qquad \< z-a_{w_2},w_2 \> < 0.
\]
The directions $v$ and $w_2$ are consecutive in $\T\cup\stab\cup\quasi$, so by Lemma~\ref{le:quasi} there exists $X\in\U$ such that
\[
X \subset \big( \H_v \cup \ell_v \big) \cap \big( \H_{w_2} \cup \ell_{w_2} \big).
\]
Hence,
\begin{equation}\label{eq:Xcorner}
\< x,v \> \leq 0 \qquad \text{and} \qquad \< x,w_2 \> \leq 0
\end{equation}
for all $x\in X$. From these it follows that if $y\in z+X$, then
\[
\< y-a_v,v \> \leq 0 \qquad \text{and} \qquad \< y-a_{w_2},w_2 \> < 0,
\]
and so $y\in D'\setminus D$. Moreover, $y$ does not lie to the left of $z$ on the line $D'\setminus D$, since if it did then we would have
\[
\< z+x,w_2 \> > \< z,w_2 \>,
\]
where $x=y-z\in X$, which would imply that $\< x,w_2 \> > 0$, contradicting \eqref{eq:Xcorner}. But we have proved that $z+X\subset[D\cup Z]$. Therefore, $z\in[D\cup Z]$, which contradicts our assumption that $z\notin[D\cup Z]$.
\end{proof}

\subsection{Growth of quasi-droplets}\label{se:growth}

Our final task of this section is to construct a sequence of $\T$-droplets (for some $\T\subset S^1$) that we can use as a framework for growth in the proof of the upper bound of Theorem~\ref{th:main} and in the proof of Theorem~\ref{th:super}.

In this subsection we shall assume that $\U$ is either supercritical or critical. We fix an open semicircle $C$ as follows:
\begin{itemize}
\item if $\U$ is supercritical then we choose $C$ such that $\stab\cap C=\emptyset$;
\item if $\U$ is critical then instead we choose $C$ such that $\stab\cap C$ does not contain any strongly stable directions.
\end{itemize}
In either case, the existence of such a semicircle follows from the definition of supercritical or critical. Let the left and right endpoints of $C$ be $u^l$ and $u^r$ respectively, let the midpoint of $C$ be $u^+$, and define
\begin{equation}\label{eq:stabu}
\stabu := \big((\stab\cup\quasi)\cap C\big) \cup \{u^l,u^r,-u^+\} \qquad \text{and} \qquad \stabu' := \big((\stab\cup\quasi)\cap C\big),
\end{equation}
where $\quasi$ is a set of quasi-stable directions given by Lemma~\ref{le:quasi}. In this subsection, as well as in Sections~\ref{se:upper} and~\ref{se:super}, all droplets will be $\stabu$-droplets.

In our construction, the growth of $\stabu$-droplets will take place predominantly in the $u^+$ direction, so it will be convenient to rotate the coordinate axes so that $(x,y) := xu^+ + yu^l$ for $x,y\in\R$.

Let $R\big((a,b),(c,d)\big)$ be the discrete rectangle with opposite corners at $(a,b)$ and $(c,d)$, so that
\[
R\big((a,b),(c,d)\big) := \Big\{ (x,y)\in\Z^2 \,:\, x,y\in\R,\, a\leq x\leq c,\, b\leq y\leq d \Big\}.
\]

Choose $\lambda>0$ sufficiently large and choose $\mu>0$ sufficiently large with respect to $\lambda$. (In the applications, $\lambda$ will always be a constant, but in certain cases $\mu$ will be a function of $p$, tending to infinity as $p\to 0$.) Let
\begin{equation}\label{eq:shapes}
R := R\big((0,0),(\lambda,\mu)\big) \quad \text{and} \quad S := \bigcup_{x\geq 0} R\big((0,0),(x,\mu)\big).
\end{equation}

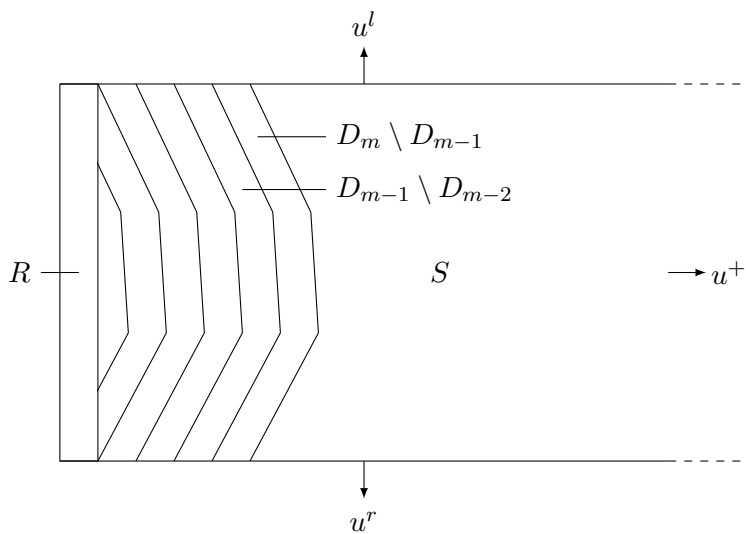
\begin{figure}[ht]
  \centering
  \begin{tikzpicture}[>=latex]
    \draw (0,0) rectangle (0.5,5);
    \draw (0,0) -- (8,0) (0,5) -- (8,5);
    \draw [dashed] (8,0) -- (9,0) (8,5) -- (9,5);
    \begin{scope}
      \clip (0.5,0) rectangle (8,5);
      \foreach \x in {0,0.5,...,2.5}
        \draw (\x,5) -- (\x+0.8,3.3) -- (\x+0.9,1.7) -- (\x,0);
    \end{scope}
    \draw (0.25,2.5) -- ++(-0.5,0) node [left] {$R$};
    \node at (5,2.5) {$S$};
    \draw [->] (4,0) -- ++(0,-0.5);
    \node at (4,-0.8) {$u^r$};
    \draw [->] (4,5) -- ++(0,0.5);
    \node at (4,5.8) {$u^l$};
%    \draw [->] (0,1.5) -- ++(-0.5,0);
%    \node at (-1,1.5) {$-u^+$};
    \draw [->] (8,2.5) -- ++(0.5,0);
    \node at (8.8,2.5) {$u^+$};
    \draw (2.6,4.3) -- (3.5,4.3) node [right] {$D_m\setminus D_{m-1}$};
    \draw (2.4,3.6) -- (3.5,3.6) node [right] {$D_{m-1}\setminus D_{m-2}$};
  \end{tikzpicture}
  \caption{The rectangle $R$, the set $S$, and the $\stabu$-droplets $(D_m)_{m\geq 0}$.}
  \label{fi:droplets}
\end{figure}

Next we define the sequence of $\stabu$-droplets. For each integer $m\geq 0$, let
\begin{equation}\label{eq:Dm}
D_m := S \cap \bigcap_{u\in\stabu'} \H_u(a_u + m d_u u),
\end{equation}
where the sets $\{a_u\in\Z^2:u\in\stabu'\}$ and $\{d_u>0:u\in\stabu'\}$ are chosen subject to the following constraints:
\begin{enumerate}
\item $D_0\subset R$;
\item for every consecutive pair of directions $u,v\in\stabu'$ there exists a line $L_u^l=L_v^r$ that is parallel to $u^+$ and intersects $S$, and is such that, for every $m\in\Z$, the intersection\footnote{These lines are discrete and therefore may have empty intersection (despite not being parallel). In this case we mean the intersection of the corresponding continuous lines.} of $\ell_u + a_u + m d_u u$ and $\ell_v + a_v + m d_v v$ lies on $L_u^l=L_v^r$;
\item for every $u\in\stabu'$ and $m\geq 0$, the $u$-side of $D_m$ has cardinality $\Omega(\mu)$.
\end{enumerate}
Note that these constraints do not uniquely specify the $D_m$, but that they are satisfied by at least one choice of the parameters $\{a_u:u\in\stabu'\}$ and $\{d_u:u\in\stabu'\}$. Any choice of parameters satisfying the constraints will be sufficient for what follows. The droplets are illustrated in Figure~\ref{fi:droplets}.

%Note that we have specifically defined the $D_m$ for negative values of $m$ as well as for non-negative values. This is because we shall need to use the $D_m$ for all values of $m$ for which the set is non-empty.

Now that we have constructed the sequence of droplets, we require one final set of definitions. We shall use these to give sufficient conditions for growth to occur from $D_{m-1}$ to $D_m$, for each $m$. For each $u\in\stabu'$ and $m\in\N$, define
\[
I(u,m) := \Big\{ i\in\Z \,:\, \ell_u(i)\cap D_{m-1} = \emptyset \text{ and } \ell_u(i)\cap D_m \neq \emptyset \Big\}.
\]
Given $u\in\stabu'$ and $i\in\Z$, a $u$-block $Z$ is said to be a \emph{$(u,i)$-block} if $Z\subset\ell_u(i)$. A $u$-block $Z$ is a \emph{suitable} $(u,i)$-block if it is a $(u,i)$-block that lies entirely between $L_u^r$ and $L_u^l$, and at least distance $\lambda$ from both, where $\lambda>0$ is sufficiently large.

The following is the main result of this subsection, and it is our main deterministic result about the growth of quasi-droplets. We shall use it in the next section to prove the upper bound of Theorem~\ref{th:main} for critical families, and in Section~\ref{se:super} to prove Theorem~\ref{th:super} for supercritical families.

\begin{lemma}\label{le:dropletgrow}
Let $m\in\N$ be such that $D_m\setminus R\neq\emptyset$, and for each $u\in\stabu'$ and each $i\in I(u,m)$, let $Z(u,i)$ be a suitable $(u,i)$-block. Then
\[
D_m \subset \bigg[ R \cup D_{m-1} \cup \bigcup_{u\in\stabu'} \bigcup_{i\in I(u,m)} Z(u,i) \bigg].
\]
\end{lemma}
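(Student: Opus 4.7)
The plan is to apply Lemma~\ref{le:dropletnewside} iteratively, growing from $R\cup D_{m-1}$ to $D_m$ one row at a time, using the blocks $Z(u,i)$ as the fuel for each step. First I would enumerate the pairs $(u,i)$ with $u\in\stabu'$ and $i\in I(u,m)$ as a sequence $(v_1,i_1),\ldots,(v_N,i_N)$, ordered so that for each fixed $u$ the indices in $I(u,m)$ appear in increasing order, i.e., the innermost new row on each side is processed first. I would then construct a nested chain of $\stabu$-droplets $\tilde{D}_0\subset\tilde{D}_1\subset\cdots\subset\tilde{D}_N=D_m$, in which $\tilde{D}_k$ is obtained from $\tilde{D}_{k-1}$ by pushing its $v_k$-side outward by exactly one line, so that $\tilde{D}_k\setminus\tilde{D}_{k-1}$ is precisely the portion of $\ell_{v_k}(i_k)$ lying in $D_m$. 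The base $\tilde{D}_0\subset R\cup D_{m-1}$ is chosen so that its $\stabu'$-sides agree with those of $D_{m-1}$ (when $D_{m-1}$ is non-empty) and its sides inherited from $S$ are of length $\Omega(\mu)$, using $R$ to supply the strip-sides when $D_{m-1}$ itself is too small.

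The induction on $k$ then shows that $\tilde{D}_k\subset\bigl[R\cup D_{m-1}\cup\bigcup_{j\le k}Z(v_j,i_j)\bigr]$. The base case is immediate from $\tilde{D}_0\subset R\cup D_{m-1}$. For the inductive step, I apply Lemma~\ref{le:dropletnewside} to the $\stabu$-droplet $\tilde{D}_{k-1}$, with $v=v_k$ and with $w_1,w_2$ the two neighbours of $v_k$ in $\stabu$. These are also consecutive with $v_k$ in $\stabu\cup\stab\cup\quasi$ because $\stabu'$ already contains all of $(\stab\cup\quasi)\cap C$ and $C$ is an open semicircle, so no further stable or quasi-stable direction can lie between. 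Lemma~\ref{le:dropletnewside} then yields $\tilde{D}_k\subset[\tilde{D}_{k-1}\cup Z(v_k,i_k)]$, and combining with the inductive hypothesis completes the step. Taking $k=N$ delivers $D_m\subset\bigl[R\cup D_{m-1}\cup\bigcup_{u,i}Z(u,i)\bigr]$.

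The hypotheses of Lemma~\ref{le:dropletnewside} must be verified at every step. The side-length requirement holds because, by constraint~(iii) in the construction of the $D_m$, all $\stabu'$-sides of $\tilde{D}_k$ have cardinality $\Omega(\mu)$, while the sides inherited from $S$ have length at least $\mu$ (or $\lambda$, in the $-u^+$ direction); taking $\mu$ sufficiently large relative to $\lambda$ and the implicit constants in Lemma~\ref{le:dropletnewside} makes every side long enough throughout the chain. The block-position requirement is ensured by constraint~(ii), which forces the endpoints of $\ell_{v_k}(i_k)\cap\tilde{D}_k$ to lie on or within $O(1)$ of $L_{v_k}^l$ and $L_{v_k}^r$, while a suitable $(v_k,i_k)$-block sits at distance at least $\lambda$ from both of these lines, and so at distance $\Omega(1)$ from both endpoints of the new row segment. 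The main obstacle is therefore not an obstruction to growth itself but rather the bookkeeping for the base droplet $\tilde{D}_0$ when $m$ is small: $D_{m-1}$ can then be empty or have very short strip-sides, and this is precisely why $R$ must appear in the union on the right-hand side of the lemma, providing the $\Omega(\mu)$-long base from which the iteration can be launched regardless of the size of $D_{m-1}$.
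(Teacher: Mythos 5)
Your proof is correct in substance, but it organizes the growth differently from the paper, and the difference is worth spelling out. The paper's proof does \emph{not} interleave the sides: for each fixed $u\in\stabu'$ it grows the rows $\ell_u(i)$, $i\in I(u,m)$, one after another, but each such row is truncated to the strip between the \emph{extensions of the $(m-1)$-positions} of the two adjacent sides (the sets $\rho(u,i)$). After all directions have been processed this leaves uninfected parallelograms $P$ at the corners of $D_m$, which the paper then fills by a separate, bare-hands argument: it takes the rule $X$ supplied by Lemma~\ref{le:quasi} for the two consecutive directions and infects the sites of $P$ one by one in increasing order of $\<x,w\>$ for an intermediate direction $w$. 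Your interleaved chain $\tilde D_0\subset\cdots\subset\tilde D_N=D_m$ keeps every intermediate region a genuine $\stabu$-droplet, so each application of Lemma~\ref{le:dropletnewside} reaches the \emph{current} corners and the corner parallelograms are absorbed into later row-additions on the adjacent sides; this buys you a cleaner conclusion (no residual corner argument) at the price of having to re-verify the hypotheses of Lemma~\ref{le:dropletnewside} at every intermediate droplet rather than only at $D_{m-1}$. You do address the two points where this could fail — the block must still sit well inside the (now shorter) segment $\ell_{v_k}(i_k)\cap\tilde D_k$, and the sides must stay long — and your observation that all intermediate corner positions remain within $O(1)$ of the lines $L_u^l$, $L_u^r$ is exactly what makes the suitability condition (distance $\geq\lambda$ from both lines) do its job. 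One slip to fix: $\tilde D_k\setminus\tilde D_{k-1}$ is the portion of $\ell_{v_k}(i_k)$ lying in $\tilde D_k$, not in $D_m$ — near a corner whose adjacent side has not yet been pushed out it is strictly shorter, and the missing sites are picked up at a later step of the chain; your later sentence about the endpoints of $\ell_{v_k}(i_k)\cap\tilde D_k$ shows you have the correct picture, so only the earlier claim needs rewording, and the telescoping to $\tilde D_N=D_m$ is unaffected.
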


\begin{figure}[ht]
  \centering
  \begin{tikzpicture}[>=latex]
    \path [name path=R2] (5.5,-5) -- (4,10);
    \path [name path=R1] (7,2) -- (3,-2);
    \path [name path=R3] (6.5,3) -- (2.5,7);
    \pgfresetboundingbox;
    \draw (0,0) -- (7,0) (0,5) -- (7,5);
    \draw [name path=S2] (6,0) -- (5.5,5);
    \draw [name path=S1] (6,0) -- (4,-2);
    \draw [name path=S3] (5.5,5) -- (3.5,7);
    \draw (5,0) -- (4.5,5);
    \draw (5,0) -- (3,-2);
    \draw (4.5,5) -- (2.5,7);
    \path [name intersections={of=R2 and S1,by=X1}];
    \path [name intersections={of=R1 and S2,by=X2}];
    \path [name intersections={of=R3 and S2,by=X3}];
    \path [name intersections={of=R2 and S3,by=X4}];
    \draw [dashed] (5,0) -- (X1);
    \draw [dashed] (5,0) -- (X2);
    \draw [dashed] (4.5,5) -- (X3);
    \draw [dashed] (4.5,5) -- (X4);
    \draw [dashed] ($(5,0)!0.2!(X2)$) -- ($(4.5,5)!0.2!(X3)$);
    \draw [dashed] ($(5,0)!0.4!(X2)$) -- ($(4.5,5)!0.4!(X3)$);
    \draw [dashed] ($(5,0)!0.6!(X2)$) -- ($(4.5,5)!0.6!(X3)$);
    \draw [->] (5.75,2.5) -- ++($(0,0)!0.5cm!(1,0.1)$);
    \path (5.75,2.5) ++($(0,0)!0.8cm!(1,0.1)$) node {$u$};
    \node at (1,0.4) {$L_u^r$};
    \node at (1,4.6) {$L_u^l$};
    \node at (4,2.5) {$D_{m-1}$};
    \draw (5.7,1.3) -- (6.5,1.3);
    \node at (7.7,1.3) {$D_m\setminus D_{m-1}$};
    \draw (4.75,4.5) -- (4,4.5) node [left] {$\rho\big(u,i(u)\big)$};
    \draw (5,3.8) -- (4,3.8) node [left] {$\rho\big(u,i(u)+1\big)$};
    \draw (5.3,4.5) -- (6,4.5) node [right] {$P$};
    \draw [->] (5,5.5) -- ++($(0,0)!0.5cm!(1,1)$);
    \path (5,5.5) ++($(0,0)!0.8cm!(1,1)$) node {$v$};
  \end{tikzpicture}
  \caption{An illustration of the growth mechanism in Lemma~\ref{le:dropletgrow}.}
  \label{fi:dropletgrow}
\end{figure}
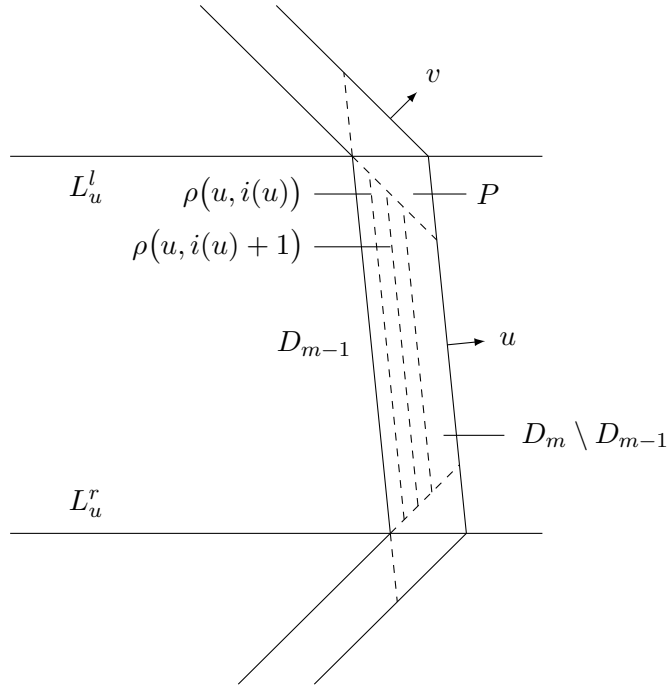

\begin{proof}
For each $u\in\stabu'$ and $v,w\in\stabu$ such that $v$, $u$, $w$ are consecutive in $\stabu$, define
\[
\rho(u,i) := \ell_u(i) \cap \H_v\big(a_v+(m-1)d_v v\big) \cap \H_w\big(a_w+(m-1)d_w w\big),
\]
where we set $d_{u^l}=d_{u^r}=0$. Also, for each $u\in\stabu'$, let $i(u):=\min I(u,m)$. Thus $\rho\big(u,i(u)\big)$ is the set of elements of the first line parallel to $\ell_u$ that lies outside of $D_{m-1}$, restricted to the line segment between the $v$- and $w$-sides of $D_{m-1}$ when those sides are extended.
%The reason for restricting to this particular line segment is that, if we are trying to grow the $u$-side of $D_{m-1}$ independently of the other sides, then this is precisely the extent to which we can apply Lemma~\ref{le:dropletnewside} to show that the growth continues to the corners of the droplet.
The situation is illustrated in Figure~\ref{fi:dropletgrow}.

We claim that the inclusion
\begin{equation}\label{eq:newclosure}
\rho\big(u,i(u)\big) \subset \Big[ R \cup D_{m-1} \cup Z\big(u,i(u)\big) \Big]
\end{equation}
follows from Lemma~\ref{le:dropletnewside}. Indeed, setting the set of directions $\T$, the $\T$-droplet $D$, the direction $v$, and the set $Z$ in the statement of Lemma~\ref{le:dropletnewside} equal to $\stabu$, $D_{m-1}$, $u$, and $Z\big(u,i(u)\big)$, respectively, it follows that the set $\rho\big(u,i(u)\big)$ we have just defined is equal to the set $D'\setminus D$ in Lemma~\ref{le:dropletnewside}, and that \eqref{eq:newclosure} holds.

By induction, after iterating the previous step $|I(u,m)|$ times, we have
\[
\bigcup_{i\in I(u,m)} \rho(u,i) \subset \bigg[ R \cup D_{m-1} \cup \bigcup_{i\in I(u,m)} Z(u,i) \bigg].
\]
Since $u\in\stabu'$ was arbitrary, it follows that
\[
\bigcup_{u\in\stabu'} \bigcup_{i\in I(u,m)} \rho(u,i) \subset \bigg[ R \cup D_{m-1} \cup \bigcup_{u\in\stabu'} \bigcup_{i\in I(u,m)} Z(u,i) \bigg].
\]
This only leaves the remaining parallelograms of sites at the corners of droplet, as shown in Figure~\ref{fi:dropletgrow}. Suppose $u$ and $v$ are consecutive directions in $\stabu'$, and let $P$ be the parallelogram at the corner between the $u$- and $v$-sides of $D_{m-1}$ and $D_m$, also as in Figure~\ref{fi:dropletgrow}. Let $X\in\U$ be a rule such that the assertion of Lemma~\ref{le:quasi} holds with directions $u$ and $v$. Choose an arbitrary unit vector $w\in S^1$ such that $\theta(u)<\theta(w)<\theta(v)$, and order the elements $x$ of $P$ in increasing order of $\< x,w\>$ (with ties resolved arbitrarily). Then $X$ can be used to infect the elements of $P$ one-by-one in this order, as required.
%Thus we only have to prove \eqref{eq:newclosure}. First, note that $Z\big(u,i(u)\big)$ is located sufficiently far from the corners of $D_{m-1}$, because it is assumed to be suitable. Thus, by the definition of a $u$-block, the infection spreads along $\ell_u\big(i(u)\big)$ in both directions from $Z\big(u,i(u)\big)$ until it is at most a constant distance from the corners, where the constant does not depend on the side lengths of the $D_m$. But then the infection continues either end of $\rho\big(u,i(u)\big)$, by Lemma~\ref{le:quasi}.
\end{proof}

\section{The upper bound for critical families}\label{se:upper}

In this section we shall use Lemma~\ref{le:dropletgrow} to prove the upper bound of Theorem~\ref{th:main} for critical update families. Throughout we assume that $\U$ is critical, that $C$ is the open semicircle specified at the start of Subsection~\ref{se:growth}, and that $\stabu$ and $\stabu'$ are as defined in \eqref{eq:stabu}. Thus, $\stab\cap C$ consists only of isolated stable directions, and therefore there exists a $u$-block for every $u\in\stabu'$. For each $u\in\stabu'$, let $\alpha_2(u)$ be the minimum cardinality of a $u$-block. Let
\begin{equation}\label{eq:alpha2}
\alpha_2 := \max\big\{ \alpha_2(u) \,:\, u\in\stabu' \big\}.
\end{equation}

We shall prove the upper bound of Theorem~\ref{th:main} in the following form. Recall once again that $\tau=\min\{t:\0\in A_t\}$.

\begin{theorem}\label{th:upper}
Let $\U$ be a critical update family, let $\epsilon>0$, and let
\[
p \geq \left(\frac{1}{\log t}\right)^{1/(\alpha_2+\epsilon)}.
\]
Then $\tau\leq t$ with high probability as $t\to\infty$.
\end{theorem}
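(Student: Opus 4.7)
Following the van Enter--Hulshof approach \cite{vEH}, I would combine a nucleation argument with Lemma~\ref{le:dropletgrow} to show that, with high probability, a droplet nucleates near the origin and grows to engulf $\0$ within $t$ steps of the process. Fix $\mu = \mu(p) \to \infty$ as $p \to 0$ and take $\lambda$ a large constant. The parameter $\mu$ must be chosen so that (i) each individual growth step from Lemma~\ref{le:dropletgrow} succeeds with probability very close to $1$, and (ii) the probability of finding a seed within the relevant region around the origin is $1-o(1)$.

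For the growth, I would apply Lemma~\ref{le:dropletgrow} inductively to the sequence $(D_m)$. For each $m$, let $E_m$ be the event that every required suitable $(u,i)$-block is present in $A$. Since $|\stabu'|$ and $|I(u,m)|$ are bounded by constants depending only on $\U$, and each relevant strip has length $\Omega(\mu)$ containing $\Omega(\mu)$ disjoint candidate positions to host a block of at most $\alpha_2$ sites, one obtains
\[
\P_p(E_m^c) \leq C\exp\bigl(-c\, p^{\alpha_2}\mu\bigr).
\]
The events $(E_m)$ depend on disjoint regions of $\Z^2$ and are therefore mutually independent, so for $M$ steps a union bound yields failure probability at most $MC\exp(-cp^{\alpha_2}\mu)$. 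Since $\log t \geq p^{-(\alpha_2+\epsilon)}$, picking $\mu$ to be a sufficiently large power of $1/p$ (say $\mu = p^{-2\alpha_2-\epsilon/2}$) makes $p^{\alpha_2}\mu$ dominate $\log M$ for the values of $M$ we will need.

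For the nucleation, I would show that with probability $1-o(1)$ a suitable seed exists in a region of appropriate size around $\0$, positioned so that the sequence $D_0 \subset D_1 \subset \cdots \subset D_M$ eventually covers the origin. A naive seed, namely a translate of $R$ entirely contained in $A$, has probability $p^{|R|}$, which is too small to offset the number of candidate locations available. Instead, one uses an internally-filled-droplet argument of the Holroyd type: the seed is any translate of $R$ for which $R \subset [R \cap A]$, which by a nested-rectangles computation occurs with probability of order $\exp(-O(p^{-\alpha_2}))$. Since there are at least $\exp(\Omega(p^{-(\alpha_2+\epsilon)}))$ disjoint candidate locations within distance $t$ of $\0$, a good seed exists with high probability, and moreover it can be placed so that growth in the $u^+$ direction sweeps through the origin.

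Combining the two steps, with high probability there is a seed from which the full sequence $D_0 \subset \cdots \subset D_M$ becomes infected, with $\0 \in D_M$. Each new row of a $D_m$ can be produced in $O(\mu)$ time steps of the bootstrap process, and $M = O(t/\mu)$, so the origin is infected by time $O(t)$. The main obstacle is striking the balance in the choice of $\mu$: the slack $\epsilon > 0$ in the hypothesis on $p$ provides precisely the polynomial gap $p^{-\epsilon}$ needed to absorb both the $\log t$ factor from the growth union bound and the logarithmic-scale losses in the nucleation probability. The argument otherwise parallels that of van Enter and Hulshof, modulo extra care to handle the quasi-stable directions $\quasi$ introduced in Section~\ref{se:upperprelims}, which is already encoded into the conclusion of Lemma~\ref{le:dropletgrow}.
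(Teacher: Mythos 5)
Your overall skeleton (a rare seed plus directional growth driven by Lemma~\ref{le:dropletgrow}, a union bound over growth steps, and a count of candidate seed locations, with the $\epsilon$-slack absorbing logarithmic losses) is the same as the paper's. The genuine gap is in your nucleation step. You dismiss the ``naive'' fully infected translate of $R$ and instead invoke a Holroyd-type internal-spanning bound $\P_p\big(R\subset[R\cap A]\big)\geq \exp\big(-O(p^{-\alpha_2})\big)$ ``by a nested-rectangles computation''. No such bound is proved in the paper, and proving it in this generality is essentially the sharp upper-bound machinery that the paper explicitly does not develop; worse, for the rectangle $R$ of constant length $\lambda$ in the $u^+$ direction and length $\mu$ in the $u^l$ direction the claim is not even plausible: internal growth along the long side of $R$ is growth in the $u^l$ direction, where only $u^l$-right-blocks of size $\beta\geq 1$ are guaranteed inside a strip of constant width, so one expects a probability like $p^{\Omega(\mu)}$, not $\exp(-O(p^{-\alpha_2}))$. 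The paper's resolution is exactly the route you discarded: it takes the seed to be a \emph{fully infected} rectangle $R^{(1)}$ of dimensions $\lambda\times\mu_1$ with $\mu_1=p^{-\alpha_2-\epsilon'}$, $\epsilon'<\epsilon$, whose probability $p^{O(\mu_1)}$ is affordable precisely because of the $\epsilon$-gap in the hypothesis $\log t\geq p^{-\alpha_2-\epsilon}$. Since such a thin droplet can only sustain polynomially many (in $1/p$) growth steps before a row fails, while the nearest seed may be super-polynomially far away, the paper inserts an intermediate \emph{widening} stage: growth through the triangle $T$ in the $u^l$ direction using $u^l$-right-blocks (with the rightward drift that forces the triangular shape), which fattens the droplet from width $\mu_1$ to width $\mu_2=p^{-\lambda+2\beta}$, after which a second application of Lemma~\ref{le:dropletgrow} carries the infection a distance $\nu_2=\exp(p^{-\lambda/3})$ to the origin. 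Your single-width, single-stage construction has no substitute for this widening step once the unproven nucleation bound is removed.

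There is also a quantitative slip in your growth phase. With candidate seeds spread over a region of diameter comparable to $t$, the number of growth steps satisfies $\log M\asymp\log t\geq p^{-\alpha_2-\epsilon}$, whereas your choice $\mu=p^{-2\alpha_2-\epsilon/2}$ gives $p^{\alpha_2}\mu=p^{-\alpha_2-\epsilon/2}\ll\log M$, so $M\exp(-cp^{\alpha_2}\mu)\to\infty$ and the union bound fails; you would need $\mu$ at least of order $p^{-2\alpha_2-\epsilon}$ (times a constant), and then the tension with the seed probability becomes even sharper. The independence of the per-step events across disjoint annuli $D_m\setminus D_{m-1}$, and the final conversion of ``all sites eventually infected'' into a time bound (the paper simply bounds the total number of sites in the construction by $\sqrt t$), are fine and match the paper.
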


\begin{remark}
The bound on $p$ in Theorem~\ref{th:upper} could be replaced by
\[
p \geq \frac{\lambda(\log\log t)^2}{(\log t)^{1/\alpha_2}},
\]
for a sufficiently large constant $\lambda$. However, since the bounds are likely to be far from optimal anyway, we prefer to prove the theorem with the weaker bound for reasons of simplicity.
\end{remark}

\begin{remark}\label{re:bounds}
For many natural update families, such as the family for the 2-neighbour model, there exists an arc $C\subset S^1$ of length strictly greater than $\pi$ (that is, an interval strictly larger than a semicircle) in which there are no strongly stable directions. For such families, provided the definitions of $\stabu'$ and $\alpha_2$ are changed appropriately to take account of the new set $C$, it is possible to modify our arguments to show that Theorem~\ref{th:upper} holds with the bound on $p$ replaced by
\[
p \geq \left(\frac{\lambda}{\log t}\right)^{1/\alpha_2},
\]
for a sufficiently large constant $\lambda$. In certain cases this would result in matching (up to a constant factor) bounds in Theorems~\ref{th:lower} and~\ref{th:upper} (recovering, for example, the result of~\cite{AL}). The necessary modifications to the arguments would be akin to the difference between the (asymptotically one-dimensional) growth in the upper bound of van Enter and Hulshof~\cite{vEH} and the (two-dimensional) growth in the upper bound of Aizenman and Lebowitz~\cite{AL}. We remark that in an earlier version of this paper (version 2 on the arXiv), these stronger bounds for Theorem~\ref{th:upper} were proved explicitly.
\end{remark}

We now begin the build up to the proof of Theorem~\ref{th:upper}. First we shall set out the definitions of the various quantities and shapes that we need in order to apply the framework of the previous section to the setting of critical update families. After we have done that, we shall explain how the growth mechanism will work.

Recall that $u^l$ is the left endpoint of $C$. Observe that $u^l$ is right-isolated, and therefore there exists a $u^l$-right-block, by Lemma~\ref{le:ublock}. Define $\beta$ to be the minimum cardinality of a $u^l$-right-block, and define also the following quantities:
\begin{equation}\label{eq:lengths}
\begin{split}
\mu_1(p) &= p^{-\alpha_2-\epsilon}, \\
\text{and} \qquad \mu_2(p) &= p^{-\lambda+2\beta},
\end{split}
\qquad
\begin{split}
\nu_1(p) &= p^{-\lambda}, \\
\nu_2(p) &= \exp\big(p^{-\alpha_2-2\epsilon}\big),
\end{split}
\end{equation}
where $\epsilon>0$ is arbitrary, and $\lambda>0$ is sufficiently large. Observe we have the inequalities
\[
\mu_1(p) \ll \mu_2(p) \ll \nu_1(p) \ll \nu_2(p).
\]

Recall that we have rotated the coordinate axes so that $(x,y)=xu^++yu^l$. Let $R^{(1)}$ and $R^{(2)}$ be the following rectangles:
\[
R^{(1)} := R\Big((0,0),\big(\lambda,\mu_1(p)\big)\Big) \quad \text{and} \quad R^{(2)} := R\Big(\big(\nu_1(p),0\big),\big(\nu_1(p)+\lambda,\mu_1(p)+\mu_2(p)\big)\Big).
\]
We shall use the construction for the droplets $(D_m)_{m\in\Z}$ from the previous section twice: once with $R=R^{(1)}$ and once with $R=R^{(2)}$. Thus, for each $i\in\{1,2\}$ and each integer $m\geq 0$, let $D_m^{(i)}:=D_m$, where $(D_m)_{m\geq 0}$ is the sequence of droplets defined in \eqref{eq:Dm} obtained if we set $\mu=\mu_i(p)$, and, in the case $i=2$, if we translate appropriately so that $R=R^{(2)}$ (see Figure~\ref{fi:critgrow}). Furthermore, let
\[
D^{(1)} := D_{\lambda\cdot\nu_1(p)}^{(1)} \qquad \text{and} \qquad D^{(2)} := D_{\nu_2(p)}^{(2)}.
\]
The final subset of $\Z^2$ that we need to give a name to is the triangle
\[
T := \Big\{ (x,y)\in\Z^2 \,:\, x,y\in\R, \; 0\leq x\leq \nu_1(p) \text{ and } 0\leq y-\mu_1(p) \leq \big\lceil p^{2\beta}x \big\rceil \Big\}.
\]
We shall use this triangle to grow in the direction of $u^l$.

\begin{figure}[ht]
  \centering
  \begin{tikzpicture}[>=latex]
    \draw (0,0) rectangle (0.5,3);
    \draw (6,0) rectangle (6.5,5);
    \draw (0,3) -- (6,5);
    \draw (0,0) -- (11,0) (0,3) -- (9.5,3) (6,5) -- (11,5);
    \draw [dashed] (11,0) -- (12,0) (11,5) -- (12,5);
    \foreach \x in {0.5,1,9.5}
      \draw (\x,3) -- (\x+0.8,2) -- (\x+0.9,1) -- (\x,0);
    \draw [dashed] (9.5,3) -- ++(-2.5,2);
    \node at (8.5,1.5) {$D^{(1)}$};
    \node at (4.7,1.5) {$D^{(1)}$};
    \node at (11.5,2.5) {$D^{(2)}$};
    \node at (4.7,3.7) {$T$};
    \draw (0.25,0.5) -- (-0.25,0.5) node [left] {$R^{(1)}$};
    \draw (6.25,0.5) -- (5.75,0.5) node [left] {$R^{(2)}$};
    \draw (1.4,2.5) -- (2,2.5) node [right] {$D_m^{(1)}$};
    \draw (1.32,1.8) -- (2,1.8) node [right] {$D_{m-1}^{(1)}$};
  \end{tikzpicture}
  \caption{The various subsets of $\Z^2$ involved in the growth mechanism, including the rectangles $R^{(1)}$ and $R^{(2)}$, the $\stabu$-droplets $D^{(1)}$ and $D^{(2)}$, and the triangle $T$.}
  \label{fi:critgrow}
\end{figure}
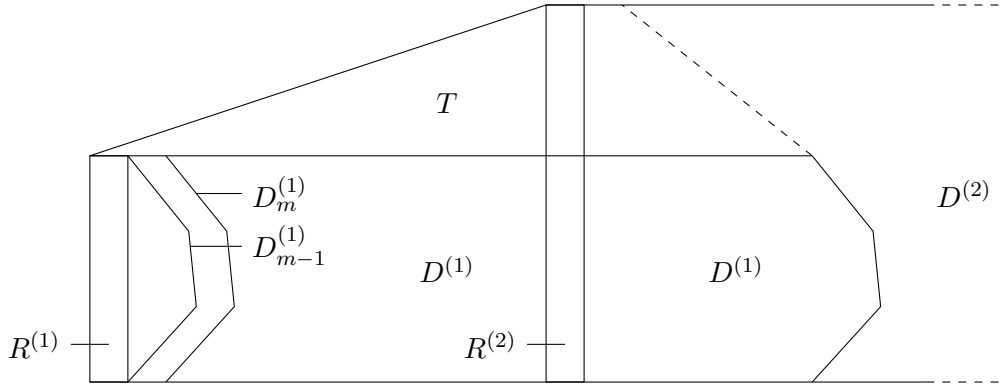

The growth mechanism that we shall use to prove Theorem~\ref{th:upper} is the following.
\begin{enumerate}
\item First, we show that there is an internally filled copy of $R^{(1)}$ such that the corresponding copy of $D^{(2)}$ contains the origin.
\item Second, given an internally filled copy of $R^{(1)}$, we use Lemma~\ref{le:dropletgrow} to show that the infection spreads in the direction of $u^+$ to fill $D^{(1)}$.
\item Next, we show that the infection spreads upwards through $T$ (that is, in the direction of $u^l$) to fill $T$ and $R^{(2)}$. Since $u^l$ may be at the end point of an interval in $\stab$, we cannot assume that there exist $u^l$-blocks, merely that there exist $u^l$-right-blocks. Thus, it could be that, as the infection spreads through $T$ row-by-row in the direction of $u^l$, one always has to look for a $u^l$-right-block \emph{to the right} of the $u^l$-right-block found in the previous row.
\item Finally, as in step (ii), we use Lemma~\ref{le:dropletgrow} to show that the infection spreads rightwards from $R^{(2)}$ to fill $D^{(2)}$, and hence infect the origin.
\end{enumerate}
In Lemma~\ref{le:Ri} we show that the events described in (ii) and (iv) each occur with high probability, and in Lemma~\ref{le:T} we show that the event described in (iii) occurs with high probability. Thus, the only unlikely event is that we find the internally filled copy of $R^{(1)}$ suitably close to the origin. It is therefore reasonable to think of $R^{(1)}$ as being a `critical droplet'.

\begin{lemma}\label{le:Ri}
For each $i\in\{1,2\}$, the event
\begin{equation}\label{eq:Ri}
\Big\{ D^{(i)} \subset \big[ R^{(i)} \cup \big(D^{(i)}\cap A\big) \big] \Big\}
\end{equation}
occurs with high probability as $p\to 0$.
\end{lemma}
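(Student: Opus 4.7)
The plan is to apply Lemma~\ref{le:dropletgrow} inductively along the nested sequence $D_0^{(i)}\subset D_1^{(i)}\subset\cdots\subset D_{M_i}^{(i)}=D^{(i)}$, where $M_1:=\lambda\cdot\nu_1(p)$ and $M_2:=\nu_2(p)$. Set $K:=R^{(i)}\cup(A\cap D^{(i)})$; the base case $D_0^{(i)}\subset R^{(i)}\subset K$ is immediate from the construction. For the inductive step I would apply Lemma~\ref{le:dropletgrow} to pass from $D_{m-1}^{(i)}$ to $D_m^{(i)}$: provided that for every $u\in\stabu'$ and every $j\in I(u,m)$ the line $\ell_u(j)$ contains a suitable $(u,j)$-block $Z(u,j)$ in $A$, the lemma yields
\[
D_m^{(i)}\subset\bigg[R^{(i)}\cup D_{m-1}^{(i)}\cup\bigcup_{u\in\stabu'}\bigcup_{j\in I(u,m)}Z(u,j)\bigg]\subset[K],
\]
because every $Z(u,j)\subset A\cap D^{(i)}\subset K$ and $D_{m-1}^{(i)}\subset[K]$ by the inductive hypothesis.

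Let $G_i$ denote the event that for every pair $(u,j)$ with $u\in\stabu'$ and $j\in I(u,m)$ for some $1\leq m\leq M_i$, there is a suitable $(u,j)$-block in $A$. Then $G_i\subset\{D^{(i)}\subset[K]\}$, so it suffices to prove that $\P_p(G_i^c)\to 0$ as $p\to 0$.

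I would bound $\P_p(G_i^c)$ by a straightforward union bound. Fix a stable direction $u\in\stabu'\cap\stab$ (the case of an unstable $u\in\stabu'$ is vacuous since the empty set is then a $u$-block). By Lemma~\ref{le:ublock}, $u$ admits a $u$-block of minimum size $\alpha_2(u)$, consisting of $\alpha_2(u)$ consecutive sites on a line parallel to $\ell_u$. By condition (iii) of the droplet construction together with the definition of `suitable', the part of $\ell_u(j)$ lying between $L_u^r$ and $L_u^l$ at distance at least $\lambda$ from both contains $\Omega(\mu_i(p))$ sites, and can therefore be partitioned into $\Omega(\mu_i(p))$ disjoint translates of this minimal block. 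Since each translate is entirely in $A$ independently with probability $p^{\alpha_2(u)}\geq p^{\alpha_2}$, the probability that $\ell_u(j)$ contains no suitable $(u,j)$-block is at most $\exp(-c\,p^{\alpha_2}\mu_i(p))$ for some $c=c(\U)>0$. The number of pairs $(u,j)$ in play is $O(\nu_i(p))$, and so the union bound gives $\P_p(G_i^c)=O\bigl(\nu_i(p)\exp(-c\,p^{\alpha_2}\mu_i(p))\bigr)$.

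The step I expect to require the most care is simply to verify that this bound tends to $0$ in both cases, since the two make rather different demands on the parameters. For $i=1$ we have $p^{\alpha_2}\mu_1(p)=p^{-\epsilon}$ and $\nu_1(p)=p^{-\lambda}$, and $p^{-\lambda}\exp(-c\,p^{-\epsilon})\to 0$ trivially. The delicate case is $i=2$: the number of pairs $\nu_2(p)=\exp(p^{-\lambda/3})$ is itself stretched-exponential, while $p^{\alpha_2}\mu_2(p)=p^{-(\lambda-2\beta-\alpha_2)}$, so convergence forces $\lambda-2\beta-\alpha_2>\lambda/3$, i.e.\ $\lambda>3\beta+\tfrac32\alpha_2$. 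This is precisely the pressure that dictates the form of $\mu_2(p)$ and $\nu_2(p)$, and that requires $\lambda$ to be chosen sufficiently large from the outset.
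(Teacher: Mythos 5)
Your proposal is correct and follows essentially the same route as the paper: iterate Lemma~\ref{le:dropletgrow} over the nested droplets, bound the probability that some line lacks a suitable block by partitioning each $u$-side into $\Omega(\mu_i(p))$ disjoint candidate blocks of size $\alpha_2$, and then check the two parameter regimes (the paper phrases the estimate as a product $\big(1-(1-p^{\alpha_2})^{\Omega(\mu_i(p))}\big)^{O(\nu_i(p))}$ rather than a union bound, but this is the same calculation). Your identification of $i=2$ as the delicate case and the resulting constraint $\lambda>3\beta+\tfrac32\alpha_2$ matches the paper's requirement that $\lambda$ be chosen sufficiently large.
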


\begin{proof}
Fix $i\in\{1,2\}$. Then for every $m\geq 0$ such that $D_m^{(i)}\neq\emptyset$, we have 
\[
\P_p\Big( D_m^{(i)} \subset \big[ R^{(i)} \cup D_{m-1}^{(i)} \cup \big(D_m^{(i)}\cap A\big) \big] \Big) \geq \big(1-(1-p^{\alpha_2})^{\Omega(\mu_i(p))}\big)^{O(1)},
\]
by Lemma~\ref{le:dropletgrow}. (The quantity $\Omega(\mu_i(p))$ in the exponent comes from partitioning the $u$-side of $D_m^{(i)}$ into segments of length $\alpha_2$, for each $u\in\stabu'$, and then using condition (iii) of the definition of $D_m$ immediately after \eqref{eq:Dm}.) Hence,
\begin{align}
\P_p\Big( D^{(i)} \subset \big[ R^{(i)} \cup \big(D^{(i)}\cap A\big) \big] \Big) &\geq \big(1-(1-p^{\alpha_2})^{\Omega(\mu_i(p))}\big)^{O(\nu_i(p))} \notag \\
&\geq \exp\Big( - O\big(\nu_i(p)\big) \exp\big(-\Omega(p^{\alpha_2}\mu_i(p))\big) \Big). \label{eq:Riprob}
\end{align}
Now, if $i=1$ then \eqref{eq:Riprob} is equal to
\[
\exp\Big( - O\big(p^{-\lambda}\big) \exp\big(-\Omega(p^{-\epsilon})\big) \Big) = 1-o(1),
\]
since $\mu_1(p)=p^{-\alpha_2-\epsilon}$ and $\nu_1(p)=p^{-\lambda}$. On the other hand, if $i=2$ then \eqref{eq:Riprob} is equal to
\[
\exp\Big( - O\big(\exp(p^{-\alpha_2-2\epsilon})\big) \exp\big(-\Omega(p^{-\lambda+2\beta+\alpha_2})\big) \Big) = 1-o(1),
\]
since $\mu_2(p)=p^{-\lambda+2\beta}$, $\nu_2(p)=\exp\big(p^{-\alpha_2-2\epsilon}\big)$, and $\lambda$ is sufficiently large. In either case, \eqref{eq:Ri} holds with high probability as $p\to 0$.
\end{proof}

\begin{lemma}\label{le:T}
The event
\[
\Big\{ R^{(2)} \subset \big[ D^{(1)} \cup (T\cap A) \big] \Big\}
\]
occurs with high probability as $p\to 0$.

\end{lemma}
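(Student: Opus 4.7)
The plan is to show that, starting from $D^{(1)}$, the infection sweeps upward through $T$ row by row, with each new row infected by means of a single $u^l$-right-block drawn from $T\cap A$, and that this growth simultaneously carries across the boundary $x=\nu_1(p)$ into $R^{(2)}$. Since $u^l$ is the left endpoint of the open semicircle $C$, it is right-isolated, so Lemma~\ref{le:ublock} supplies a $u^l$-right-block of cardinality $\beta$; such a block appears anchored at any prescribed position with probability at least $p^\beta$.

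The first step is to set up a deterministic skeleton. Let $K$ be a constant depending only on $\U$, taken larger than the diameter of every update rule. For $h=1,\dots,\mu_2(p)$, define positions $x_h$ inductively by letting $x_h$ be the smallest $x$ such that some translate of a fixed minimum-cardinality $u^l$-right-block $Z$ lies in $T\cap A$ at height $\mu_1(p)+h$ anchored at $x$, subject to $h/p^{2\beta}\leq x\leq \nu_1(p)-K$ and (for $h\geq 2$) $x\geq x_{h-1}-K$. Set $x_h=\infty$ if no such $x$ exists, and let $E$ be the event $\{x_h<\infty\text{ for all }h\leq\mu_2(p)\}$.

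On $E$, induction on $h$ shows that every site at height $\mu_1(p)+h$ lying to the right of $x_h$ and inside $T\cup R^{(2)}$ belongs to $[D^{(1)}\cup(T\cap A)]$; since $x_h\leq\nu_1(p)-K$, this in particular covers the full row of $R^{(2)}$ at that height. The block at $(x_h,\mu_1(p)+h)$, together with the already-infected sites in rows below (which cover the downward-reaching support of the rule used to realise $Z$, because $x_h\geq x_{h-1}-K$ and $K\geq\diam(X)$), allows one to iterate that rule along $\ell_{u^l}^r(\mu_1(p)+h)$ from $x_h$ and thereby fill out the rest of the row. This is essentially a one-sided, ``trailing'' analogue of Lemma~\ref{le:dropletnewside} adapted to the merely right-isolated direction $u^l$, for which a full $u^l$-block need not exist.

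To estimate $\P(E^c)$, let $G_h:=x_h-\max\{h/p^{2\beta},x_{h-1}-K\}$. Conditional on $x_{h-1}$, the random variable $G_h$ is stochastically dominated (uniformly) by a geometric random variable with parameter of order $p^\beta$, since the relevant block events involve disjoint positions at height $\mu_1(p)+h$ and the probability of a block at any given anchor is at least $p^\beta$. Thus $S:=\sum_{h=1}^{\mu_2(p)}G_h$ is stochastically dominated by a sum of $\mu_2(p)$ i.i.d.\ such geometrics, which has mean $O(\mu_2(p)/p^\beta)=O(p^{-\lambda+\beta})$---a factor $p^\beta$ smaller than the available budget $\nu_1(p)=p^{-\lambda}$. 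A Bernstein-type inequality for sums of independent sub-exponential random variables then yields
\[
\P\bigl(S>\tfrac12\nu_1(p)\bigr)\leq\exp\bigl(-c\,p^{-\lambda+\beta}\bigr)=o(1),
\]
and on the complement of this event one has $x_h\leq\nu_1(p)-K$ for every $h$, so $E$ holds. The main obstacle is the deterministic step above: verifying that a single constant $K$ suffices to chain right-block growth across consecutive rows in the absence of a full $u^l$-block, and checking that the particular slope $p^{2\beta}$ of $T$ is tuned exactly so that the rightward drift $\sim 1/p^\beta$ per row is absorbed over the $\mu_2(p)$ rows.
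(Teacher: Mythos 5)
Your growth mechanism is the right one and is essentially the paper's: a row-by-row upward sweep through $T$ by $u^l$-right-blocks whose positions drift to the right. The probabilistic bookkeeping, however, does not establish your event $E$. First, a sign error: for the block in row $h$ (and the subsequent rightward growth along that row) to consume only already-infected sites, it must sit sufficiently far to the \emph{right} of the blocks in the rows below — you need $x_h\geq x_{h-1}+K$, not $x_h\geq x_{h-1}-K$; with your condition the block may sit above the uninfected left portion of row $h-1$ and cannot fire. Second, and more seriously, the slope of $T$ forces a baseline drift of $p^{-2\beta}$ per row: row $h$ of $T$ begins at $x\approx(h-1)p^{-2\beta}$ (your lower constraint $x\geq h/p^{2\beta}$ is off by one row, which already makes the window for $h=\mu_2(p)$ empty, since $\mu_2(p)\cdot p^{-2\beta}=\nu_1(p)$). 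This forced drift — not the $\sim p^{-\beta}$ per row needed merely to locate a block, as your closing remark suggests — consumes the entire budget $\nu_1(p)$ by the top row. Hence bounding $S\leq\tfrac12\nu_1(p)$ is neither necessary nor sufficient: even granting it, for $h$ near $\mu_2(p)$ the admissible window $[(h-1)p^{-2\beta},\,\nu_1(p)-K]$ has length only $O(p^{-2\beta})$, and $x_h<\infty$ there is a \emph{per-row} event whose failure probability is $(1-p^\beta)^{\Omega(p^{-2\beta})}=e^{-\Omega(p^{-\beta})}$. What is actually needed is that the excess of each $x_h$ over the left end of its own row stays below $p^{-2\beta}/2$, say; this is exactly the paper's argument (a block among the middle of the leftmost $p^{-2\beta}$ sites of each row, plus a union bound over the $\mu_2(p)$ rows), and it also yields $x_h\geq x_{h-1}+p^{-2\beta}/2\gg K$ for free, repairing the chaining condition.

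A smaller omission: to cover the row of $R^{(2)}$ at height $\mu_1(p)+h$, the infection in rows $h-1,h-2,\dots$ must already extend past $x=\nu_1(p)+\lambda$, i.e.\ \emph{outside} $T\cup R^{(2)}$, since the rules reach down and sideways by up to $K$. The paper handles this by showing, via Lemma~\ref{le:quasi} and the direction of $\stabu$ consecutive to $u^l$, that each row's infection in fact extends all the way to the $u$-side of $D^{(1)}$; your sketch restricts attention to $T\cup R^{(2)}$ and so does not quite close this step.
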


\begin{proof}
For each $i\in\Z$, let $U_i$ be the leftmost $p^{-2\beta}$ sites of $\ell_{u^l}(i)\cap T$, and let $I$ be the set of $i\in\Z$ for which $U_i$ is non-empty. For each $i\in I$, let $Z_i\subset U_i$ be a set of $\beta$ consecutive sites contained in the middle $p^{-2\beta}/2$ sites of $U_i$.

We claim that
\begin{equation}\label{eq:R2}
R^{(2)} \subset \bigg[ D^{(1)} \cup \bigcup_{i\in I} Z(i) \bigg].
\end{equation}
To see this, first let $i_0:=\min I$, and observe that $[D^{(1)}\cup Z(i_0)]$ contains all sites of $\ell_{u^l}(i_0)$ to the right of $Z(i_0)$, at least until within $O(1)$ distance from the right-hand end of the $u^l$-side of $D^{(1)}$, since $Z(i_0)$ is a $u^l$-right-block. Losing $O(1)$ sites along each line in this way, we observe that~\eqref{eq:R2} easily holds, because the number of horizontal rows (that is, lines parallel to $\ell_{u^l}$) intersecting $T$ is at most $O(p^{-\lambda+2\beta})$, and this is much less $\Omega(p^{-\lambda})$, which is a lower bound for the number of vertical columns of sites (that is, lines parallel to $\ell_{u^+}$) intersecting $D^{(1)}$ to the right of $R^{(2)}$. (For an illustration of the situation, see Figure~\ref{fi:critgrow}, and in particular the dashed line emerging from the right-hand end of the $u^l$-side of $D^{(1)}$, which need not be parallel to any of the sides of $D^{(1)}$.)

%Now let $u\in\stabu'$ be such that $u$, $u^l$ are consecutive in $\stabu$, and hence in $\stab\cup\quasi$. Then in fact every site $x\in\ell_{u^l}(i_0)$ located to the right of $Z(i_0)$ becomes infected provided that
%\[
%x \in \H_u\big(a_u + \nu_1(p) d_u u\big),
%\]
%by Lemma~\ref{le:quasi}. (To prove this more formally, one could repeat the argument of Lemma~\ref{le:dropletnewside}.)  We therefore have by induction that, for each $i\in I$, every site in $\ell_{u^l}(i)$ located to the right of $Z(i)$ and contained in $\H_u\big(a_u + \nu_1(p) d_u u\big)$ becomes infected. Thus, since we ensured in the construction that the $u^l$-side of $D^{(1)}$ extended much further than the $u^r$-side of $T$, it follows that \eqref{eq:R2} holds.

Hence, to prove the lemma, it is sufficient to show that, with high probability, the middle $p^{-2\beta}/2$ sites of $U_i$ contain a set of $\beta$ consecutive sites, all contained in $A$, for every $i\in I$. The probability that this fails is at most
\[
p^{-\lambda+2\beta} \cdot (1-p^\beta)^{\Omega(p^{-2\beta})} \leq p^{-\lambda+2\beta} \cdot \exp\big( -\Omega(p^{-\beta}) \big) = o(1)
\]
as $p\to 0$, as required.
\end{proof}

We are now ready to complete the proof of Theorem~\ref{th:upper}.

\begin{proof}[Proof of Theorem~\ref{th:upper}]
Let
\[
p = \left(\frac{1}{\log t}\right)^{1/(\alpha_2+3\epsilon)},
\]
where $\epsilon>0$ is the same (arbitrarily small) $\epsilon$ as in the definitions of the various lengths in~\eqref{eq:lengths}. We shall show that $\tau\leq t$ with high probability as $t\to\infty$.

It will be convenient to `sprinkle' the probability in two rounds, in order to maintain independence. Formally, let $A'$ and $A''$ be two independent $p$-random subsets of $\Z^2$, and let $A:=A'\cup A''$. Thus, sites in $A$ are infected with probability $2p-p^2$, rather than $p$; this is valid (if an abuse of notation) because $\epsilon$ was chosen arbitrarily. We shall use the first round to find a suitably positioned internally filled copy of $R^{(1)}$, and the second round to show that this copy of $R^{(1)}$ grows to infect the origin by time $t$.

There are at least $\Omega\big(\nu_2(p)\big)$ sites $x\in\Z^2$ such that the sets $x+R^{(1)}$ are disjoint and $\0\in x+D^{(2)}$. The probability that for no such $x$ do we have $x+R^{(1)}\subset A'$ is at most
\[
\big(1-p^{O(\mu_1(p))}\big)^{\Omega(\nu_2(p))} \leq \exp\Big( - \Omega\big(\exp(-p^{-\alpha_2-\epsilon}) \exp(p^{-\alpha_2-2\epsilon})\big) \Big) = o(1),
\]
since $\mu_1(p)=p^{-\alpha_2-\epsilon}$ and $\nu_2(p)=\exp(p^{-\alpha_2-2\epsilon})$.

So with high probability there exists an $x$ such that $\0\in x+D^{(2)}$ and $x+R^{(1)}\subset A'$. Fix one such $x$, and for notational simplicity let us translate our notation by $-x$ so that $\0\in D^{(2)}$ and $R^{(1)}\subset A'$. By Lemmas~\ref{le:Ri} and~\ref{le:T}, the event
\[
\Big\{ D^{(2)} \subset \big[ R^{(1)} \cup \big((D^{(1)}\cup T\cup D^{(2)})\cap A''\big)\big] \Big\}
\]
occurs with high probability.

It only remains to show that $\0$ is infected by time $t$. But
\[
\big| D^{(1)}\cup T\cup D^{(2)} \big| \leq p^{-O(1)} \exp\big(p^{-\alpha_2-2\epsilon}\big) < t,
\]
so even if the sites are infected one-by-one, we still have $\0\in A_t$, which completes the proof of the theorem.
\end{proof}

\section{Supercritical families}\label{se:super}

Recall that supercritical families are those for which there exists an open semicircle in $S^1$ with which the stable set has empty intersection. Our aim in this short section of the paper is to prove Theorem \ref{th:super}. Combined with Theorem~\ref{th:main}, this provides a justification for the distinction made between supercritical and critical update families in Definition~\ref{de:class}. Using the same methods as those in the proof of Theorem~\ref{th:super}, we also establish a deterministic result, Theorem~\ref{th:superclass}, which says that an update family is supercritical if and only if there exist finite subsets of $\Z^2$ with infinite closure.

Almost all of the work that goes into the proof of Theorem~\ref{th:super} has already been completed in Section~\ref{se:upperprelims}. The only further observation required is that if $C$ is an open semicircle such that $\stab\cap C=\emptyset$ then the empty-set is a $u$-block for every $u\in C$. Thus, a sufficient condition for $\0\in A_t$ is that a sufficiently large constant sized rectangle of initially infected sites is `suitably located' within distance $t$ of the origin. (The only important detail about rectangles being `suitably located' is that there are at least $t^{\Omega(1)}$ disjoint such rectangles -- in fact, there are at least $\Omega(t)$.)

%The growth of cellular automata (in a supercritical-like fashion) has been studied by Willson \cite{Willson} and Gravner and Griffeath \cite{GG06}. Willson considers processes similar to those considered in the present paper and proves a convergence result for the asymptotic shapes of certain \emph{finite} initial configurations. It is possible to use Willson's results in place of our Proposition \ref{pr:superdet} in order to prove Theorem \ref{th:super}, although the deduction from Willson's results is far from immediate. For that reason, and because the proof of Proposition \ref{pr:superdet} is conceptually straightforward, we give a self-contained proof of Theorem \ref{th:super}, via Proposition \ref{pr:superdet}. Gravner and Griffeath study only supercritical $r$-neighbour models on undirected lattice graphs and prove stronger results in that specific case; in their context, determining for which models there exist finite sets with infinite closures is a trivial task. Thus, our advantage over \cite{GG06} is that our models are considerably more general.

\begin{proof}[Proof of Theorem \ref{th:super}]
Let $\U$ be supercritical and let $p$ satisfy $p\geq t^{-\epsilon}$, for some sufficiently small $\epsilon>0$. We shall show that $\tau\leq t$ with high probability as $t\to\infty$.

Let $\quasi$ be a set of quasi-stable directions given by Lemma~\ref{le:quasi}, let $C\subset S^1$ be the semicircle specified at the start of Subsection~\ref{se:growth}, and let $\stabu$ and $\stabu'$ be as specified in \eqref{eq:stabu}. If $u\in\stabu'$, then we have $u\in\quasi\setminus\stab$, because $\stab\cap C=\emptyset$. Therefore $u$ is a quasi-stable direction, but not a stable direction, and so the empty set is a $u$-block.

Now let $R$ and $S$ be the sets defined in \eqref{eq:shapes}, with $\mu>\lambda>0$ being sufficiently large constants, and let the $\stabu$-droplets $D_1,D_2,\dots$ be as defined subsequently in~\eqref{eq:Dm}. Then
\begin{equation}\label{eq:superinfinite}
D_m \subset [R]
\end{equation}
for every $m\in\N$, by Lemma~\ref{le:dropletgrow} and the observation we have just made that the empty set is a $u$-block for every $u\in\stabu'$.

Let $\delta>0$ be a sufficiently small constant. There are at least $\Omega(t)$ sites $x\in\Z^2$ such that $\|x\|\leq\delta t$, the rectangles $x+R$ are disjoint, and $\0\in x+S$. The probability that we do not have $x+R\subset A$ for any such $x$ is at most
\[
(1-p^{O(1)})^{\Omega(t)} \leq \exp\big( - p^{O(1)} \Omega(p^{-1/\epsilon}) \big) \to 0
\]
as $t\to\infty$, since $\epsilon$ is sufficiently small. So with high probability there exists $x\in\Z^2$ such that $\|x\|\leq\delta t$, $\0\in x+S$ and $x+R\subset A$. Suppose $x$ has this property. Then $x+D_m\subset\big[(x+D_m)\cap A\big]$ for every $m\in\N$, by \eqref{eq:superinfinite}. Now take $m$ minimal such that $\0\in x+D_m$. Then $\tau\leq|D_m|$, even if sites in $D_m$ are infected one-by-one, and also $|D_m|<t$, because $\delta$ was chosen sufficiently small. Thus $\tau\leq t$, and this completes the proof.
\end{proof}

The next result, which is another easy consequence of Lemma~\ref{le:dropletgrow} (or more specifically, of \eqref{eq:superinfinite}), is an equivalence between supercriticality and the existence of finite sets with infinite closures.

\begin{theorem}\label{th:superclass}
A two-dimensional bootstrap percolation update family $\U$ is supercritical if and only if there exists a finite set $K\subset\Z^2$ such that $[K]$ is infinite.
\end{theorem}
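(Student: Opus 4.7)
The plan is to prove the two implications separately, with most of the work for the forward direction already available from the proof of Theorem~\ref{th:super}.

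Forward direction ($\Rightarrow$): Assume $\U$ is supercritical. The work is essentially done in the proof of Theorem~\ref{th:super}: taking $K := R$, where $R$ is the bounded rectangle from Section~\ref{se:upperprelims}, formula~\eqref{eq:superinfinite} gives $D_m \subset [R]$ for every $m \in \N$. Since the $\stabu$-droplets $(D_m)_{m \geq 0}$ extend unboundedly in the $u^+$ direction, $\bigcup_m D_m$ is infinite, and hence so is $[R]$.

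Reverse direction ($\Leftarrow$): I argue the contrapositive: if $\U$ is not supercritical, then every finite $K \subset \Z^2$ has finite closure. Non-supercriticality means $\stab \cap C \neq \emptyset$ for every open semicircle $C \subset S^1$. A short geometric argument forces $\0$ to lie in the interior of the convex hull (in $\R^2$) of $\stab$: if $\0$ were on the boundary or outside this convex hull, then $\stab$ would be contained in a closed half-plane through $\0$, and the opposite open semicircle of $S^1$ would be disjoint from $\stab$, contradicting our assumption. By (the planar case of) Carath\'eodory's theorem there exist finitely many $u_1, \ldots, u_k \in \stab$ whose convex hull contains $\0$ in its interior.

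Now let $K \subset \Z^2$ be finite. For each $i$, choose $b_i \in \Z^2$ with $\langle y, u_i \rangle < \langle b_i, u_i \rangle$ for every $y \in K$, so that $K \subset \H_{u_i} + b_i$, and set $D := \bigcap_{i=1}^k (\H_{u_i} + b_i)$. Because $\0$ lies in the interior of the convex hull of $\{u_1, \ldots, u_k\}$, the recession cone of $D$ is trivial, so $D$ is bounded (hence finite). Each $u_i$ is stable, so by the translation-invariance of $\U$-bootstrap percolation $[\H_{u_i} + b_i] = \H_{u_i} + b_i$, and monotonicity of the closure operator gives $[K] \subset \bigcap_{i=1}^k [\H_{u_i} + b_i] = D$. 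Hence $[K]$ is finite, completing the contrapositive.

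The only genuinely new ingredient beyond what has already been proved is the convexity observation linking the semicircle-covering property of $\stab$ to $\0$ being interior to $\mathrm{conv}(\stab)$; once this is in hand, trapping $K$ inside a bounded intersection of stable half-planes, and invoking stability together with the translation-invariance of the dynamics, is routine.
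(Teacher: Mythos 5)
Your proof is correct and follows essentially the same route as the paper: the forward implication is read off from \eqref{eq:superinfinite}, and the reverse implication traps $[K]$ inside a finite intersection of stable half-planes whose normals have the origin in the interior of their convex hull. You merely spell out the convexity/separation step (which the paper asserts directly with three stable directions) in slightly more detail; the fact you invoke for finitely many directions surrounding the origin is really Steinitz's refinement of Carath\'eodory, but it is standard and the argument is sound.
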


\begin{proof}
One implication follows from \eqref{eq:superinfinite}. The other is a triviality: if $\U$ is not supercritical, then every open semicircle intersects the stable set. Therefore there exist stable directions $u_1$, $u_2$ and $u_3$ such that the interior of their convex hull contains the origin, and hence $\{u_1,u_2,u_3\}$-droplets are finite. Thus, if $K\subset\Z^2$ is an arbitrary finite set, then $[K]\subset D$, where $D$ is the minimal $\{u_1,u_2,u_3\}$-droplet containing $K$. 
\end{proof}

Stronger statements than Theorem~\ref{th:superclass} are available for update families with zero or one stable direction(s). The proofs of these statements, which are omitted (but which may be found in version 2 of this paper on the arXiv), are further easy applications of the quasi-stability method.

\begin{theorem}
Let $\U$ be a two-dimensional bootstrap percolation update family.
\begin{enumerate}
\item There exists a finite set $K\subset\Z^2$ such that $[K]=\Z^2$ if and only if $\stab=\emptyset$.
\item Let $u\in S^1$ be a unit vector. Then there exists a finite set $K\subset\Z^2$ such that $\H_u \subset [K]$ if and only if $\stab\setminus\{u\}=\emptyset$. \qed
\end{enumerate}
\end{theorem}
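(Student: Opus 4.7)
The forward directions of (i) and (ii) share a common idea. Suppose $v \in \stab$ violates the stated condition. Since $[\H_v] = \H_v$ and $K$ is finite, we can pick $a \in \Z^2$ with $\<a, v\>$ sufficiently large that $K \subset \H_v + a$, and monotonicity yields $[K] \subseteq [\H_v + a] = \H_v + a$. In (i) this half-plane is not $\Z^2$, contradicting $[K] = \Z^2$. In (ii), suppose $v \in \stab \setminus \{u\}$: if $v \neq -u$, any direction $w$ with $\<w, u\> < 0$ and $\<w, v\> > 0$ gives a ray that lies eventually in $\H_u$ but eventually leaves $\H_v + a$; if $v = -u$, then $\H_v + a = \{x : \<x, u\> > \<a, u\>\}$, which cannot contain sites of $\H_u$ with sufficiently negative $\<x, u\>$. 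Either way, $\H_u \not\subseteq \H_v + a \supseteq [K]$, a contradiction.

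The main content is the reverse direction of (ii). We treat the case $\stab = \{u\}$; the case $\stab = \emptyset$ is handled identically. By Theorem~\ref{th:stabclass}, $u$ is rational, so one may choose a rational unit vector $u^\perp$ perpendicular to $u$. Consider the three open semicircles $C_-$, $C_\perp$, $C_{-\perp}$ centred at $-u$, $u^\perp$, $-u^\perp$ respectively. Each fails to contain $u$ (it lies on the boundary in each case), and so, since $\stab \subseteq \{u\}$, each satisfies $\stab \cap C = \emptyset$. As in the proof of Theorem~\ref{th:super}, this means that the empty set is a $u'$-block for every $u' \in \stabu'$; equation~\eqref{eq:superinfinite} then implies that, for each of the three choices of semicircle, a sufficiently large initial rectangle $R$ alone satisfies $S \subseteq [R]$, where $S$ is the infinite half-strip of width $\mu$ extending from $R$ in the direction $u^+$ determined by the semicircle.

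Take $K = R_1$, an initial rectangle for the $C_-$ construction, positioned just inside $\H_u$. Then $[K] \supseteq S_1$, an infinite strip of width $\mu_1$ (in the $u^\perp$ direction) extending in direction $-u$ within $\H_u$. Choose $\mu_1$ large enough to accommodate translates of the initial rectangles for the $C_\perp$ and $C_{-\perp}$ constructions. For each integer $n \geq 1$, place such translates $R_2^{(n)}$ and $R_3^{(n)}$ inside $S_1$ so that, as $n$ varies, their horizontal positions tile the column indices $\{a \in \Z : a \leq -1\}$, each translate occupying a horizontal band of width $\mu_2$ or $\mu_3$. Since $R_k^{(n)} \subseteq S_1 \subseteq [K]$, monotonicity gives $[R_k^{(n)}] \subseteq [K]$, and applying \eqref{eq:superinfinite} to each translate produces an upward strip $S_2^{(n)}$ and a downward strip $S_3^{(n)}$ contained in $[K]$. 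The union $\bigcup_n S_2^{(n)}$ then covers the upper half $\{a \leq -1, b \geq 0\}$ of $\H_u$ and $\bigcup_n S_3^{(n)}$ covers the lower half, so that $\H_u \subseteq [K]$.

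The reverse direction of (i) is now immediate. If $\stab = \emptyset$, fix any $u \in S^1$ and apply (ii) to obtain a finite $K$ with $\H_u \subseteq [K]$. Since $u$ is unstable, the opening lemma of Section~\ref{se:stableset} gives $[\H_u] = \Z^2$, so monotonicity yields $[K] \supseteq [\H_u] = \Z^2$. The main delicacy of the argument is the iterative geometry in the reverse direction of (ii): one must verify that the translates $R_2^{(n)}, R_3^{(n)}$ genuinely fit inside $S_1$ with the correct orientation (which forces $\mu_1$ to dominate the vertical heights of these rectangles) and that the horizontal tiling by bands of widths $\mu_2$ and $\mu_3$ covers every integer column of $\H_u$. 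Both verifications reduce to choosing the strip and rectangle constants appropriately together with elementary integer-offset bookkeeping, and there is no further subtlety once one is in the supercritical regime that \eqref{eq:superinfinite} makes available.
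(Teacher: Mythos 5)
Your proposal is correct and is exactly the sort of argument the paper has in mind for this statement (whose proof it omits, describing it as a further easy application of the quasi-stability method): the forward implications follow from trapping the finite set $K$ inside a translate of a stable half-plane, and the reverse direction of (ii) follows from the supercritical growth machinery of Section~\ref{se:upperprelims} --- since every quasi-stable direction in a semicircle avoiding $u$ is unstable, the empty set is a block there and \eqref{eq:superinfinite} yields an infinite strip from a constant-size rectangle, which you then use in the directions $-u$, $u^\perp$, $-u^\perp$ to exhaust $\H_u$, the remaining placement and tiling checks being routine. The one point to tidy is the subcase of (ii) with $\stab=\emptyset$ and $u$ irrational, where Theorem~\ref{th:stabclass} no longer guarantees rationality of $u$ (so "handled identically" is slightly glib); this is patched immediately either by deducing that subcase from part (i) applied with a rational direction, or by noting that the covering strips are allowed to overshoot $\H_u$, so no genuine gap remains.
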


\section{Open problems and conjectures}\label{se:open}

In this paper we have introduced a new, unified model of bootstrap percolation on the square lattice. Owing to the novelty of this general model, we quite naturally end the paper with a number of open problems, of which there are three broad types.

{\bf Subcritical families in two dimensions.} At present we are not able to prove any results about subcritical families. In particular, we cannot prove that the distinction we have made between critical and subcritical families is the right one, although we conjecture that it is in the following strong sense.\footnote{As noted earlier, since the submission of this paper, Balister, Bollob\'as, Przykucki and Smith \cite{BBPS} have proved this conjecture.}

\begin{conjecture}\label{co:sub}
Let $\U$ be a two-dimensional subcritical bootstrap percolation update family. Then there exists $p>0$ such that
\[
\P_p\big([A]=\Z^2\big)=0.
\]
\end{conjecture}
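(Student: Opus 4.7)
It suffices to show $\mathbb{P}_p(\0\notin[A])>0$ for some $p>0$: since $\{[A]=\Z^2\}$ is translation-invariant and the product measure is $\Z^2$-ergodic, any such bound forces $\mathbb{P}_p([A]=\Z^2)=0$. The plan is a Peierls-type construction: for $p$ small, exhibit with positive probability a closed polygonal region containing $\0$ that the $\U$-process cannot enter.

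\textbf{Setup via strongly stable directions.} Using subcriticality together with Theorem~\ref{th:stabclass}, choose a finite cyclically-ordered family $\T=\{u_1,\ldots,u_k\}\subset\stab$ of strongly stable directions such that $\0$ lies in the interior of the convex hull of $\T$ and all consecutive angular gaps are strictly less than $\pi$; each $u_i$ lies in an open arc $I_i\subset\stab$ of stable directions. The strong stability gives the following geometric input (extracted from the proof of Lemma~\ref{le:Tclass}): if a rule $X\in\U$ satisfies $T(X)\neq\emptyset$, then $u_i\notin\overline{T(X)}$ for every $i$, and hence $X$ contains a site $x$ with $\<x,u_i\> > 0$. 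Rules with $T(X)=\emptyset$ have $\0$ in the closed convex hull of $X$ and can therefore only fire at a site of $D$ if an initially infected site lies strictly inside $D$, which will be ruled out by a cleanliness hypothesis on $D$ itself.

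\textbf{Deterministic barrier lemma.} Take a $\T$-droplet $D\ni\0$ of constant size and surround it by an annular wreath of $k$ strips $\Sigma_1,\ldots,\Sigma_k$, with $\Sigma_i$ perpendicular to $u_i$, of large constant width $N=N(\U,\T)$, and adjacent to the $u_i$-side of $D$. Close the wreath by bevelling at each corner using the quasi-stable directions of Lemma~\ref{le:quasi}. Tile each $\Sigma_i$ (and each bevel) into blocks of constant size, and call a block \emph{clean} if $A$ does not intersect it. The central assertion to establish is: there exists $\ell=\ell(\U,\T)$ such that if $D\cap A=\emptyset$ and each $\Sigma_i$ and each bevel contains an unbroken stretch of $\ell$ consecutive clean blocks spanning the width of the wreath, then $[A]\cap D=\emptyset$. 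The proof is by induction on time, using the geometric input above: any rule firing at a would-be new infection $y$ near $D$ would require an already-infected site strictly on the $u_i$-positive side of $y$ (inside the clean stretch), which the induction hypothesis together with cleanliness rules out.

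\textbf{Probabilistic estimate and main obstacle.} A given block is clean with probability $(1-p)^{O(1)}$, and the joint cleanliness event above involves only $O_\U(1)$ blocks, all at bounded distance from $\0$; hence its probability tends to $1$ as $p\to 0$, which combined with the deterministic lemma yields $\mathbb{P}_p(\0\notin[A])>0$. The hard part is the deterministic barrier lemma. Without symmetry, rules can behave very asymmetrically relative to each strip, and at a bevel where two consecutive directions meet, a single rule could in principle assemble infected sites from outside several strips simultaneously to bypass each individual barrier. The robust fix is a renormalisation: replace ``clean'' by a graded ``good block'' condition that classifies blocks by the partial rule-translates they could support, show that for small $p$ these good blocks dominate a subcritical Bernoulli-like process on a renormalised lattice, and conclude by a Peierls contour argument around $\0$. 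Executing this renormalisation and the corresponding corner analysis in the full absence of symmetry is the substantive technical task and is, as indicated in the footnote, the content of the subsequent resolution by Balister, Bollob\'as, Przykucki and Smith.
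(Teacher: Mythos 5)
This statement is a conjecture that the paper explicitly leaves open (``At present we are not able to prove any results about subcritical families''); the paper contains no proof of it, only a footnote recording that it was later established by Balister, Bollob\'as, Przykucki and Smith. Your proposal is likewise not a proof: its central step, the ``deterministic barrier lemma'', is asserted rather than proved, and you yourself concede in the final paragraph that executing it ``is the substantive technical task and is \dots the content of the subsequent resolution by Balister, Bollob\'as, Przykucki and Smith''. A proof that defers its key lemma to a different paper is a strategy sketch, not a proof, so there is a genuine gap here by your own admission.

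Moreover, the sketch as stated would not close even if one tried to fill in the details. A clean strip perpendicular to a strongly stable $u_i$ does block advancement \emph{in the direction $u_i$} (any rule firing there would be contained in a translate of $\H_{u_i}$, contradicting stability), but your wreath is closed off at the corners by bevels in quasi-stable directions, and quasi-stability gives no blocking property whatsoever: a quasi-stable direction $w$ from Lemma~\ref{le:quasi} may be unstable, in which case infection crosses lines perpendicular to $w$ using sites on the negative side only, so the bevel segments are not barriers. At a corner between consecutive directions $u_i,u_{i+1}$, a single rule $X$ need only fail to lie in $\H_{u_i}$ and fail to lie in $\H_{u_{i+1}}$; it can do so via two \emph{different} sites, each of which may be infected from outside a different strip, so your induction step (``any rule firing at $y$ requires an already-infected site on the $u_i$-positive side of $y$ inside the clean stretch'') does not go through at corners. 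Your claim that rules with $T(X)=\emptyset$ ``can only fire at a site of $D$ if an initially infected site lies strictly inside $D$'' is also false as stated: $\0\in\mathrm{conv}(X)$ allows $y+X$ to straddle the boundary of $D$ when $y$ is near that boundary, using only eventually-infected external sites. These are precisely the obstructions that force the multi-scale/renormalisation argument of the later paper, so the proposal should be regarded as an (insightful) heuristic, not a resolution of the conjecture.
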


This conjecture lies in sharp contrast to Corollary \ref{co:pcZ2}, which says that if $\U$ is \emph{not} subcritical then for every $p>0$ we have $\P_p\big([A]=\Z^2\big)=1$. (Note that we always have $\P_p\big([A]=\Z^2\big)\in\{0,1\}$ owing to a standard $0$-$1$ law.)

{\bf Sharper results for critical families in two dimensions.} As discussed in Remark~\ref{re:bounds}, for certain update families our methods could be used with very minor modifications to obtain matching (up to a constant factor) bounds in Theorems~\ref{th:lower} and~\ref{th:upper}, and thus to determine $p_c(\Z_n^2,\U)$ up to a constant factor. However, for many other update families our methods (as they stand) do not give matching upper and lower bounds, and so for those families the order of magnitude of $p_c(\Z_n^2,\U)$ remains an open question. Moreover, it is known \cite{MountDuarte,vEH} that the order of magnitude of $p_c(\Z_n^2,\U)$ is not always a negative power of $\log n$. It is likely to be a challenging problem to determine $p_c(\Z_n^2,\U)$ up to a constant factor in general.

{\bf Higher dimensions.} It is natural to ask what can be said about the large-scale behaviour of $\U$-bootstrap percolation in dimensions $d\geq 3$. The first step towards answering this question is likely to be to give a classification of the models along the lines of Definition \ref{de:class}. This will not be a straightforward generalization of the two-dimensional definition, for the following reason. When $d=3$, the critical probability of the 2-neighbour model is $\Theta(\log n)^{-2}$ \cite{AL,BBM3D}, while the critical probability of the 3-neighbour model is $\Theta(\log\log n)^{-1}$ \cite{CerfCir,BBM3D}. By contrast, in two dimensions we have seen that the critical probabilities of all critical models are of the form $(\log n)^{-\Theta(1)}$, while the critical probabilities of non-critical models have either been proved (in the case of supercritical) or conjectured (in the case of subcritical) to be non-logarithmic. Thus, in three dimensions, the classification of critical $\U$-bootstrap models ought to divide further: at least into those for which the critical probability is $(\log n)^{-\Theta(1)}$ and those for which it is $(\log\log n)^{-\Theta(1)}$. However, this is of course only the start of the story: once that has been done, the question becomes to determine the critical probability up to a constant factor for all `critical' models in all dimensions. We anticipate that this will be a significant area of future research.

\section*{Acknowledgements}

The first author is partially supported by NSF grant DMS~1301614 and MULTIPLEX no. 317532, the second author by a CNPq bolsa PDJ, and the third author by the Knut and Alice Wallenberg Foundation. The authors are grateful to Yuval Peres and other members of the Theory Group at Microsoft Research, Redmond, where much of the research in this paper was carried out. The authors are also greatly indebted to the anonymous referee for his or her many detailed comments, which have considerably improved the presentation of this paper and allowed us to greatly streamline many of the proofs.

\bibliographystyle{amsplain}
\bibliography{../bprefs}

\providecommand{\bysame}{\leavevmode\hbox to3em{\hrulefill}\thinspace}
\providecommand{\MR}{\relax\ifhmode\unskip\space\fi MR }
% \MRhref is called by the amsart/book/proc definition of \MR.
\providecommand{\MRhref}[2]{%
  \href{http://www.ams.org/mathscinet-getitem?mr=#1}{#2}
}
\providecommand{\href}[2]{#2}
\begin{thebibliography}{10}

\bibitem{AL}
M.~Aizenman and J.L. Lebowitz, \emph{Metastability effects in bootstrap
  percolation}, J. Phys. A \textbf{21} (1988), no.~19, 3801--3813.

\bibitem{BBPS}
P.~Balister, B.~Bollob\'as, M.J. Przykucki, and P.J. Smith, \emph{Subcritical
  $\mathcal{U}$-bootstrap percolation models have non-trivial phase
  transitions}, To appear, Trans. Amer. Math. Soc.

\bibitem{BBhyp}
J.~Balogh and B.~Bollob\'as, \emph{Bootstrap percolation on the hypercube},
  Probab. Theory Related Fields \textbf{134} (2006), no.~4, 624--648.

\bibitem{BBDCM}
J.~Balogh, B.~Bollob\'as, H.~Duminil-Copin, and R.~Morris, \emph{The sharp
  threshold for bootstrap percolation in all dimensions}, Trans. Amer. Math.
  Soc. \textbf{364} (2012), no.~5, 2667--2701.

\bibitem{BBM3D}
J.~Balogh, B.~Bollob\'as, and R.~Morris, \emph{Bootstrap percolation in three
  dimensions}, Ann. Probab. \textbf{37} (2009), no.~4, 1329--1380.

\bibitem{BBMmaj}
\bysame, \emph{Majority bootstrap percolation on the hypercube}, Combin.
  Probab. Comput. \textbf{18} (2009), no.~1--2, 17--51.

\bibitem{BPP}
J.~Balogh, Y.~Peres, and G.~Pete, \emph{Bootstrap percolation on infinite trees
  and non-amenable groups}, Combin. Probab. Comput. \textbf{15} (2006), no.~2,
  715--730.

\bibitem{BisSchon}
M.~Biskup and R.H. Schonmann, \emph{Metastable behavior for bootstrap
  percolation on regular trees}, J. Stat. Phys. \textbf{136} (2009), no.~4,
  667--676.

\bibitem{CerfCir}
R.~Cerf and E.N.M. Cirillo, \emph{Finite size scaling in three-dimensional
  bootstrap percolation}, Ann. Probab. \textbf{27} (1999), no.~4, 1837--1850.

\bibitem{CerfManzo}
R.~Cerf and F.~Manzo, \emph{The threshold regime of finite volume bootstrap
  percolation}, Stochastic Process. Appl. \textbf{101} (2002), no.~1, 69--82.

\bibitem{CLR}
J.~Chalupa, P.L. Leath, and G.R. Reich, \emph{Bootstrap percolation on a
  {B}ethe lattice}, J. Phys. C \textbf{12} (1979), no.~1, L31--L35.

\bibitem{Duarte}
A.M.S. Duarte, \emph{Simulation of a cellular automaton with an oriented
  bootstrap rule}, Phys. A \textbf{157} (1989), 1075--1079.

\bibitem{GG96}
J.~Gravner and D.~Griffeath, \emph{First passage time for threshold growth
  dynamics on $\mathbb{Z}^2$}, Ann. Probab. \textbf{24} (1996), no.~4,
  1752--1778.

\bibitem{GG99}
\bysame, \emph{Scaling laws for a class of critical cellular automaton growth
  rules}, Proceedings of the Erd{\Horig{o}}s Center Workshop on Random Walks,
  1999, pp.~167--188.

\bibitem{Hol}
A.E. Holroyd, \emph{Sharp metastability threshold for two-dimensional bootstrap
  percolation}, Probab. Theory Related Fields \textbf{125} (2003), no.~2,
  195--224.

\bibitem{MountDuarte}
T.S. Mountford, \emph{Critical length for semi-oriented bootstrap percolation},
  Stochastic Process. Appl. \textbf{56} (1995), 185--205.

\bibitem{Schon2}
R.H. Schonmann, \emph{Finite size scaling behavior of a biased majority rule
  cellular automaton}, Phys. A \textbf{167} (1990), no.~3, 619--627.

\bibitem{Schon}
\bysame, \emph{On the behavior of some cellular automata related to bootstrap
  percolation}, Ann. Probab. \textbf{20} (1992), no.~1, 174--193.

\bibitem{Ulam}
S.~Ulam, \emph{Random processes and transformations}, Proc. Internat. Congr.
  Math. (1950), 264--275.

\bibitem{vanEnter}
A.C.D. van Enter, \emph{Proof of {S}traley's argument for bootstrap
  percolation}, J. Stat. Phys. \textbf{48} (1987), 943--945.

\bibitem{vEH}
A.C.D. van Enter and W.J.T. Hulshof, \emph{Finite-size effects for anisotropic
  bootstrap percolation: logarithmic corrections}, J. Stat. Phys. \textbf{28}
  (2007), no.~6, 1383--1389.

\bibitem{vN}
J.~von Neumann, \emph{Theory of self-reproducing automata}, Univ. Illinois
  Press, Champaign, 1966.

\end{thebibliography}

\end{document}